\documentclass[hidelinks,onefignum,onetabnum]{siamart250211}

\usepackage{tikz}
\usepackage{mathtools}
\usepackage{graphicx}
\usepackage[caption=false,font=footnotesize,labelfont=bf]{subfig}
\usepackage{placeins}
\usepackage{needspace}
\usepackage{enumitem}

\usepackage{amsfonts}
\usepackage{graphicx}
\graphicspath{{./}}
\usepackage{epstopdf}
\usepackage{setspace} 
\usepackage{bm}
\usepackage{algpseudocode}
\usepackage{mathtools}
\usepackage{mathabx}
\usepackage{multirow}

\ifpdf
  \DeclareGraphicsExtensions{.eps,.pdf,.png,.jpg}
\else
  \DeclareGraphicsExtensions{.eps}
\fi

\newsiamremark{remark}{Remark}
\newsiamremark{hypothesis}{Hypothesis}
\crefname{hypothesis}{Hypothesis}{Hypotheses}
\newsiamthm{claim}{Claim}

\headers{Fast algorithm for computing pseudospectra}{K. Deng and K. Xu}

\title{Recycling singular and projection subspaces for pseudospectra computation}

\author{Kuan Deng\thanks{School of Mathematical Sciences, University of Science and Technology of China, 96 Jinzhai Road, Hefei 230026, Anhui, China (\email{dengkuan@mail.ustc.edu.cn}, \email{kuanxu@ustc.edu.cn}).} \and Kuan Xu\footnotemark[2]}
\usepackage{amsopn}

\def\ema{\epsilon_{\mathrm{m}}}

\makeatletter
\newcommand{\AlgoOutputRule}{%
  \par\nointerlineskip\vspace{0.7ex}%
  \Statex\hspace*{-\algorithmicindent}%
  \rule{\dimexpr\linewidth+\algorithmicindent\relax}{0.4pt}%
  \par\nointerlineskip\vspace{0.4ex}%
}
\makeatother

\ifpdf
\hypersetup{
pdftitle={Recycling singular and projection subspaces for pseudospectra computation},
pdfauthor={D. Doe, P. T. Frank, and J. E. Smith}
}
\fi

\definecolor{psePrelim}{rgb}{0.0000,0.4470,0.7410}
\definecolor{pseInvLanc}{rgb}{0.8500,0.3250,0.0980}
\definecolor{pseRecySing}{rgb}{0.4940,0.1840,0.5560}
\definecolor{pseLobpcg}{rgb}{0.4660,0.6740,0.1880}
\definecolor{pseUpdateRecy}{rgb}{0.3010,0.7450,0.9330}
\definecolor{pseQr}{rgb}{0.9290,0.6940,0.1250}
\colorlet{psePredPts}{pseInvLanc}
\colorlet{psePrecond}{psePrelim}
\newcommand{\pselegendcenter}[1]{%
  \raisebox{\dimexpr(\ht\strutbox-\dp\strutbox-\height+\depth)/2\relax}{#1}%
}
\DeclareRobustCommand{\pselegendbox}[1]{%
  \pselegendcenter{\textcolor{#1}{\rule{2.0ex}{1.5ex}}}%
}
\DeclareRobustCommand{\pselegenddot}[1]{%
  \pselegendcenter{\tikz{\path[draw=#1,fill=#1] (0,0) circle[radius=0.41ex];}}%
}
\DeclareRobustCommand{\pselegendcross}[1]{%
  \pselegendcenter{\tikz{%
    \draw[#1,line width=1.02pt,line cap=round] (-0.56ex,-0.56ex) -- (0.56ex,0.56ex);
    \draw[#1,line width=1.02pt,line cap=round] (-0.56ex,0.56ex) -- (0.56ex,-0.56ex);
  }}%
}
\makeatletter
\renewcommand{\AlgoOutputRule}{%
  \par\nointerlineskip\vspace{0.7ex}% 紧贴上一行；需要更贴就改为 -0.7ex/-0.8ex
  \Statex\hspace*{-\algorithmicindent}%
  \rule{\dimexpr\linewidth+\algorithmicindent\relax}{0.4pt}%
  \par\nointerlineskip\vspace{0.4ex}
}

\def\mG{\mathcal{G}}
\def\mH{\mathcal{H}}
\def\mI{\mathcal{I}}

\def\mL{\mathcal{L}}

\def\mN{\mathcal{N}}
\def\mS{\mathcal{S}}
\def\mP{\mathcal{P}}

\def\mO{\mathcal{O}}

\def\mV{\mathcal{V}}

\def\ema{\epsilon_{\mathrm{m}}}
\begin{document}

\maketitle

% REQUIRED
\begin{abstract}
Computing matrix pseudospectra over a prescribed region requires evaluating the smallest singular value of $C-zI$ at a large number of grid points, which can be prohibitively expensive for large-scale matrices. We develop a recycling-based framework for accelerating such computations for both dense and sparse matrices. The main idea is to exploit the correlation between singular value problems at neighboring grid points by adaptively recycling singular subspaces computed at previously visited points by an iterative SVD solver. We develop fast Rayleigh--Ritz-SVD procedures for extracting Ritz singular pairs from the recycled singular subspaces, together with fast residual evaluation procedures, with an overall cost that scales linearly with the number of recycled samples. When the iterative SVD solver admits preconditioning, we propose using a two-level preconditioner whose projection subspaces are recycled. Numerical experiments demonstrate that the proposed recycling strategies yield substantial speedups over existing methods while maintaining the accuracy of the computed pseudospectra.
\end{abstract}

% REQUIRED
\begin{keywords}
pseudospectra, singular value decomposition, eigenvalue decomposition, subspace recycling, LOBPCG, preconditioning
\end{keywords}

% REQUIRED
\begin{MSCcodes}
65F15, % Numerical computation of eigenvalues and eigenvectors of matrices
65F08, % Preconditioners for iterative methods
15A18, % Eigenvalues, singular values, and eigenvectors
47A10  % Spectrum, resolvent 
\end{MSCcodes}

\section{Introduction}\label{sec:intro}
The pseudospectrum provides a more complete characterization of the spectral behavior of a nonnormal matrix than the spectrum alone. It has been widely used in applications such as stability analysis of dynamical systems, control theory, and numerical analysis \cite{tre4,tre2}.

For $C \in \mathbb{C}^{n\times n}$, the $\epsilon$-pseudospectrum is defined by
\begin{align}
\sigma_\epsilon(C) = \{z \in \mathbb{C} : \lVert(C-zI)^{-1}\rVert > \epsilon^{-1}\}, \label{defSquare}
\end{align}
with the convention that $\lVert(C-zI)^{-1}\rVert=\infty$ whenever $z$ is an eigenvalue of $C$. In the $2$-norm, \eqref{defSquare} is equivalently characterized in terms of the smallest singular value:
\begin{align}
\sigma_\epsilon(C) = \{z \in \mathbb{C} : \sigma_{\min}(C-zI) < \epsilon\}. \label{defSquareSVD}
\end{align}

For $C \in \mathbb{C}^{m \times n}$ with $m \geq n$, the definition in \cref{defSquareSVD} also applies, where $I$ is interpreted as the $m \times n$ rectangular identity matrix \cite{wri2}. One important application of the pseudospectra of rectangular matrices is the computation of the spectra of infinite-dimensional operators via the pseudospectra of finite-dimensional \emph{matrix foldings}, i.e., rectangular truncations of an infinite-dimensional matrix \cite{han,col1,col2}.

In view of \cref{defSquareSVD}, the numerical computation of $\epsilon$-pseudospectra for $C \in \mathbb{C}^{m \times n}$ over a prescribed region $\Omega \subset \mathbb{C}$ reduces to evaluating the smallest singular value of
\begin{align*}
C(z) := C-zI,
\end{align*}
at each point $z$ on a grid $\mathcal{G}$ covering $\Omega$. This task is computationally demanding for two reasons. First, the grid $\mathcal{G}$ is often large, so even moderately expensive computations at individual grid points can result in a substantial overall computational cost. Second, the matrices $C(z)$ are often ill-conditioned, and the smallest singular values may be poorly separated. As a consequence, Lanczos- or Krylov-type iterative methods may converge very slowly or exhibit stagnation.

For a square dense matrix $C$, the core \textsc{EigTool} method \cite{tre1,tre2} nevertheless computes the largest eigenvalue of $N(z)^{-1}$, where
\begin{align*}
N(z) := (C-zI)^*(C-zI),
\end{align*}
using the inverse Lanczos method. Solving the linear systems with $N(z)$ as the coefficient matrix by means of QR or LU factorization in Lanczos iterations would be expensive, since such systems must be solved repeatedly at every grid point. An ingenious acceleration proposed by Lui \cite{lui,tre1} is to first triangularize $C$ via a Schur factorization, thereby avoiding the need to triangularize $N(z)$ at each grid point. This preliminary reduction lowers the overall complexity from $\mO(|\mG|n^3)$ to $\mO(n^3+|\mG|n^2)$, where $|\mG|$ denotes the number of grid points in $\mG$.

For rectangular matrices, Wright and Trefethen proposed a preliminary reduction based on a QR or QZ factorization of $C$, which substantially reduces the cost of QR factorization associated with the inverse Lanczos iteration at every grid point of $\mG$ \cite{wri2}. This reduction lowers the leading cost from $\mO(|\mG|mn^2)$, corresponding to performing a QR factorization independently at each grid point, to $\mO(mn^2 + |\mG|n^3)$ when $m \geq 2n$, and to $\mO(n^3 + |\mG|(m-n)n^2)$ when $n < m < 2n$. 

For sparse matrices, the preliminary reductions based on Schur or QZ factorizations are often impractical, since they typically produce dense factors. Consequently, the standard approach is to perform a sparse QR or LU factorization of $C(z)$ at each grid point and then apply inverse Lanczos- or Krylov-type iterative methods \cite{bag,kok,lar}. These approaches can be expensive when the factors exhibit substantial fill-in, which is often the case for large sparse matrices. 

Despite these differences in the preliminary reductions, we shall refer to these approaches collectively as the \emph{Lanczos-based} method, which serves as the baseline method in our numerical experiments.

To deal with large matrices, \emph{projection-based} methods first project $C$ onto a low-dimensional subspace, such as an invariant subspace or a Krylov subspace \cite{toh,tre1,tre2}. The reduced problem can then be handled using the same small-scale techniques employed for dense matrices, and the resolvent norm of the reduced matrix serves as a surrogate for that of the full matrix. However, the quality of the approximation may depend strongly on the choice of subspace, and reliable error control is often difficult.

Another line of acceleration exploits the fact that $C(z)$ varies smoothly with $z$ over a fine grid. The idea of \emph{continuation} is to reuse information from a neighboring grid point; for example, one may use the right singular vector associated with $\sigma_{\min}(C(z))$ as an initial guess for inverse Lanczos iteration at $z+\Delta z$ \cite{lui}. While such warm starts can reduce the iteration count in favorable situations, they may be unreliable when $\sigma_{\min}(C(z))$ belongs to a cluster of singular values, potentially leading to misconvergence \cite[\S 39]{tre2}.

More generally, one may regard $C(z)$ as a parameterized matrix. Along these lines, Sirkovi\'c proposed a \emph{reduced-basis} approach to pseudospectra computation \cite{sir}. While effective in some settings, this method has several limitations: (1) it can be difficult to handle small $\epsilon$ (when $C(z)$ is close to singular), since the Rayleigh--Ritz procedure is applied directly to $N(z)$; (2) computing residuals of the Ritz pairs can be expensive, with a cost that scales like $p^2$, where $p$ denotes the number of samples; and (3) the eigenvalues and eigenvectors of $C$ in the target region $\Omega$ are required to construct the initial sample set, but such information is not always available. An improved reduced-basis approximation for the smallest singular value was proposed in \cite{man}, which addresses limitation (1) of \cite{sir} through a two-sided procedure, but still suffers from limitations (2) and (3). Related approaches for parameterized eigenvalue problems have also been studied; see, e.g., \cite{ruy}.

From a broader numerical linear algebra perspective, pseudospectra computation on a grid can be viewed as solving a sequence of closely related problems. This viewpoint connects to a large body of work on \emph{recycling} techniques. For instance, in the context of linear systems, recycling Krylov subspaces can reduce iteration counts for sequences of similar systems \cite{par,ahu,car4}. Related ideas involving the recycling or updating of preconditioners have also been studied for sequences of matrices \cite{ben1,meu,bel,ber,cal,teb,dui,car1,zah}. In the context of eigenvalue problems, recycling invariant subspaces has been exploited to solve sequences of similar eigenproblems \cite{car2}. However, this approach only provides an initial subspace for Krylov--Schur iteration and does not fundamentally improve convergence rates.

In this work, we exploit the idea of recycling to develop fast algorithms for pseudospectra computation within a unified framework for both dense and sparse matrices. Specifically, we recycle singular subspaces spanned by singular vectors computed at adaptively selected grid points previously visited during the grid traversal. Unlike projection-based or reduced-basis methods, where the subspace is fixed, our adaptive strategy makes the approach more flexible for computing pseudospectra over a wide region $\Omega$. Moreover, we develop a fast Rayleigh--Ritz-SVD procedure for extracting Ritz singular pairs from the recycled singular subspaces, together with efficient procedures for evaluating the corresponding residuals, at a computational cost proportional to the number of recycled samples $p$. If the iterative SVD solver admits preconditioning, we can further accelerate the computation using a two-level preconditioner \cite{tan} by recycling the projection subspace associated with the preconditioner in a manner analogous to singular-subspace recycling, leading to a new preconditioner recycling strategy based on projection-subspace updates.

In principle, the proposed recycling strategies can be combined with a variety of iterative SVD solvers. In this work, however, we instantiate our algorithms using a locally optimal block preconditioned conjugate gradient (LOBPCG) method \cite{kny} for computing the smallest singular pairs of a given matrix. This method, referred to as LOBPCG-SVD, is described in \cref{sec:lobpcg} of the supplementary material accompanying this paper. We choose LOBPCG-SVD primarily because it naturally accommodates both unpreconditioned and preconditioned implementations. Extensive numerical experiments demonstrate that the two recycling strategies yield substantial reductions in computational cost while maintaining accuracy.

Throughout the paper, we denote by the asterisk $*$ the conjugate transpose of a matrix or scalar, by $i$ the imaginary unit, by $\ema$ machine epsilon, and by $I_n$ the $n \times n$ identity matrix. For submatrix notation, the colon $:$ indicates an index range. For a matrix $A$, $A_{j{:}k,\ell{:}q}$ denotes the submatrix consisting of rows $j$ through $k$ and columns $\ell$ through $q$. Moreover, $A_{\ell{:}q}$ denotes the submatrix consisting of columns $\ell$ through $q$, and $A_{j{:}k,{:}}$ denotes the submatrix consisting of rows $j$ through $k$. The symbol $\sim$ is used as a placeholder for quantities that need not be specified explicitly.

The rest of the paper is organized as follows. In \cref{sec:triRecy}, we develop a fast algorithm for pseudospectra computation based on triangularization and singular-subspace recycling. In \cref{sec:recyprojection}, we show that this algorithm can be further accelerated when the SVD solver admits preconditioning by a two-level preconditioner whose projection subspace is recycled using a strategy parallel to that for singular subspaces. Numerical experiments reported in \cref{sec:exp} demonstrate the significant speedups achieved by the proposed recycling strategies. We conclude in \cref{sec:out} with directions for future work.

\section{Computing pseudospectra by triangularization and singular subspace recycling}\label{sec:triRecy}
In this section, we propose a recycling strategy for exploiting useful information contained in the singular subspaces computed at previously visited grid points. This recycling approach features a fast Rayleigh--Ritz-SVD procedure and an adaptive update of the recycling subspace. Combining these ingredients yields a new fast algorithm for computing pseudospectra.

\subsection{Triangularization and preliminary reduction}\label{sec:prelimred}
The standard methods for computing pseudospectra usually require triangularizing $C(z)$ at every grid point in $\mG$ to cope with the ill-conditioning of $C(z)$. We reduce the cost of such triangularizations by first performing a preliminary reduction that replaces $C(z)$ by the matrix pencil
\begin{align}
M(z) := M - zS, \label{genForm}
\end{align}
whose singular values coincide with those of $C(z)$. This is followed by the thin QR factorization
\begin{align}
M(z) = Q(z) R(z). \label{triGen}
\end{align}
An iterative method, e.g., Lanczos iteration or LOBPCG, can then be applied to $R(z)^{-1}$ or $R(z)^{-*}$ for the largest singular values, from which the smallest singular values of $C(z)$ are obtained from the reciprocal \cite{tre1, tre2, bag, kok, lar}.

The specific preliminary reduction, which is crucial for minimizing the cost of the subsequent QR factorization that is performed repeatedly across the grid, depends on whether the original matrix is dense or sparse, and square or rectangular. For example, when $C$ is dense and square, one may perform a Schur decomposition of $C$ and take $M$ to be the resulting upper triangular factor and $S=I$ \cite{tre1, tre2}. Since $M(z)$ is already upper triangular, the QR factorization in \cref{triGen} can be skipped. For sparse matrices, $M$ and $S$ are obtained by applying suitable fill-reducing column permutations to $C$ and $I$ \cite{dav3}, thereby reducing the cost of the sparse QR factorization in \cref{triGen}. See \cref{sec:prelred} in the supplementary material for details on the preliminary reductions for different types of matrices.

\subsection{Recycling the singular subspaces}\label{sec:recycsing}
For $M(z)$, consider the associated Gram matrix
\begin{align}
T(z) := (M-zS)^*(M-zS) = M^*M - zM^*S - z^*S^*M + |z|^2S^*S, \label{affineN}
\end{align}
which depends on $z$ through an affine decomposition. For a general $z \in \mathbb{C}$, let
\begin{align*}
T(z)X = X\Sigma^2, \qquad \Sigma = \operatorname{diag}(\sigma_{1}, \dots, \sigma_{r}),
\end{align*}
where $\Sigma$ contains the smallest singular values of $M(z)$ and $X$ is the corresponding matrix of right singular vectors. Motivated by the ideas of continuation \cite{lui}, reduced-basis methods \cite{sir}, and subspace methods \cite{man,ruy}, we collect the right singular subspaces obtained at some previously visited grid points for extracting useful information. Let $s_1,\dots,s_p \in \mG$ denote these points, which we refer to as the \emph{recycling points}, and let $\mS = \{s_1,\dots,s_p\}$ denote the corresponding \emph{recycling set}. Moreover, let $X^k \in \mathbb{C}^{n\times r}$ denote the matrix of right singular vectors computed at $s_k$ for $k=1, \dots, p$, and define the \emph{recycling subspace}
\begin{align*}
\mV := \operatorname{span}\{X^1,\dots,X^p\}
     = \operatorname{span}\{V\},
\end{align*}
where $V \in \mathbb{C}^{n\times pr}$ is an orthonormal basis for $\mV$. From now on, we assume that $pr \ll m,n$, which is confirmed by our extensive numerical experiments.

% Using this recycling space, we first develop a fast Rayleigh--Ritz-SVD procedure for extracting approximate right singular vectors and then discuss how the recycled space is updated adaptively.

\subsubsection{Fast Rayleigh--Ritz-SVD procedure and residual computation}\label{sec:rss}
A natural way to extract approximate right singular vectors from $\mV$ is to apply the Rayleigh--Ritz procedure directly to $M(z)$ using the basis $V$. This amounts to computing a thin QR factorization
\begin{align*}
M(z)V = QR,
\end{align*}
followed by the SVD
\begin{align*}
R = J\Xi G^*,
\end{align*}
where $\Xi=\operatorname{diag}(\xi_1,\dots,\xi_{pr})$ contains the Ritz singular values. The corresponding right Ritz singular vectors are then obtained as
\begin{align*}
\tilde{X} = VG_{1{:}r}.
\end{align*}
We shall refer to this procedure for computing the singular value decomposition as the \emph{Rayleigh--Ritz-SVD procedure}. However, the Rayleigh--Ritz-SVD procedure requires $\mO(p^2r^2m + pr^2n)$ flops, which can be expensive even for moderate values of $p$. To reduce the cost, we adopt a two-step strategy.

In the first step, we apply the Rayleigh--Ritz procedure to the shifted matrix $T(z)+\varepsilon I_n$ using $V$, which leads to the projected eigenvalue problem
\begin{align}
H(z)G = G\Theta, \label{step1}
\end{align}
where $H(z) = V^* (T(z)+ \varepsilon I_n) V$ and $\Theta = \operatorname{diag}(\theta_1, \dots, \theta_{pr})$. The shift $\varepsilon$ is introduced to avoid possible breakdown caused by ill conditioning. It follows from the affine linear decomposition \cref{affineN} and the orthonormality of $V$ that
\begin{align}
H(z) = H_1 - zH_2 - z^*H_2^* + |z|^2H_3 + \varepsilon I_{pr}, \label{rayleighN}
\end{align}
where
\begin{align}
H_1 = (MV)^*MV, \qquad H_2 = (MV)^*SV, \qquad H_3 = (SV)^*SV. \label{H}
\end{align}
With precomputed $H_1$, $H_2$, and $H_3$, $H(z)$ can be assembled at very little additional cost for each $z$. Although this significantly reduces the computational cost, the matrix $G$ obtained by solving \cref{step1} coincides with that obtained from the Rayleigh--Ritz-SVD procedure applied to $M(z)$ using $V$ only in exact arithmetic. In fact, when \cref{step1} is solved using \textsc{Lapack}, the angle between the computed eigenvector $\hat{G}_{j{:}j}$ and the exact eigenvector $G_{j{:}j}$ satisfies
\begin{align}
\vartheta(G_{j{:}j}, \hat{G}_{j{:}j}) \leq \frac{c(pr)\theta_{pr}\ema}{\min_{k \neq j}|\theta_k - \theta_j|}, \label{errg}
\end{align}
where $c(pr)$ depends only on the value of $pr$ \cite{and}. Since $\mV$ often captures most components of the right singular vectors associated with the smallest $r$ singular values of $M(z)$, we expect $\theta_j \approx \sigma_j^2 + \varepsilon$ for $j = 1, \ldots, r$. The bound \cref{errg} suggests that the corresponding Ritz singular vectors $V\hat{G}_{j{:}j}$ may be highly inaccurate when $\sigma_1,\ldots,\sigma_r$ are very small, as is often the case since $M(z)$ is usually ill-conditioned. In addition to \cref{errg}, the angle between the computed subspace spanned by $\hat{G}_{1{:}j}$ and the exact subspace spanned by $G_{1{:}j}$, as well as the error in the computed eigenvalues, can also be bounded by
\begin{align}
\vartheta(G_{1{:}j}, \hat{G}_{1{:}j}) \leq \frac{c(pr)\theta_{pr}}{\theta_{j+1}-\theta_{j}}\ema, \qquad
|\theta_j - \hat{\theta}_j| \leq c(pr)\theta_{pr}\ema. \label{errG}
\end{align}
Motivated by these bounds, for a given tolerance $\tau$, we choose the smallest $\tilde{r} \ge r$ such that
\begin{align}
|\hat{\theta}_{\tilde{r}} - \hat{\theta}_{\tilde{r}+1}| > \frac{\hat{\theta}_{pr}\ema}{\tau}. \label{tau}
\end{align}
Thus, by \cref{errG}, $\hat{G}_{1{:}\tilde{r}}$ spans the desired subspace with accuracy $\mO(\tau)$. Consequently, the exact Ritz singular vectors $VG_{1{:}r}$ can be approximated by vectors lying in $\operatorname{span}\{V\hat{G}_{1{:}\tilde{r}}\}$ up to $\mO(\tau)$. Since $\mV$ only contains the right singular vectors associated with the smallest $r$ singular values, $\theta_j$ rarely approximates $\sigma_j^2 + \varepsilon$ well for $j>r$, and $\{\hat{\theta}_j\}_{j > r}$ are often well separated, even when $M(z)$ is ill-conditioned. As a result, \cref{tau} is usually satisfied for a relatively small $\tilde{r}$.

In the second step, we apply the Rayleigh--Ritz-SVD procedure to $M(z)$ using $V\hat{G}_{1{:}\tilde{r}}$. This requires a thin QR factorization followed by an SVD, i.e.,
\begin{align}
M(z)V\hat{G}_{1{:}\tilde{r}}
=
(W_1 - zW_2)\hat{G}_{1{:}\tilde{r}}
=
\tilde{Q}\tilde{R},
\qquad
\tilde{R}
=
\tilde{J}\tilde{\Xi}\tilde{G}^*,
\label{step2}
\end{align}
where $\tilde{\Xi} = \operatorname{diag}(\tilde{\xi}_1, \ldots, \tilde{\xi}_{\tilde{r}})$ contains the Ritz singular values, and $W_1$ and $W_2$ are given by
\begin{align}
W_1 = MV, \qquad W_2 = SV. \label{W12}
\end{align}
The Ritz singular vectors are then given by
\begin{align*}
\tilde{X} = V\hat{G}_{1{:}\tilde{r}} \tilde{G}_{1{:}r}.
\end{align*}
Furthermore, the residuals in the computed Ritz singular pairs $(\tilde{\Xi}, \tilde{X})$ can be evaluated efficiently using \cref{affineN}:
\begin{align}
\tilde{W}
=
T(z)\tilde{X}
-
\tilde{X}\tilde{\Xi}^2
=
(W_3 - zW_4 - z^*W_5 + |z|^2W_6)\hat{G}_{1{:}\tilde{r}} \tilde{G}_{1{:}r}
-
\tilde{X}\tilde{\Xi}^2,
\label{fastresidual}
\end{align}
where
\begin{align}
W_3 = M^*MV, \qquad
W_4 = M^*SV, \qquad
W_5 = S^*MV, \qquad
W_6 = S^*SV. \label{W3456}
\end{align}
Again, neither $M(z)V\hat{G}_{1{:}\tilde{r}}$ nor $T(z)\tilde{X}$ is formed explicitly, as doing so would be prohibitively expensive when $M$ is dense. Instead, $W_1,\dots,W_6$ are precomputed, and $M(z)V\hat{G}_{1{:}\tilde{r}}$ and $T(z)\tilde{X}$ are evaluated according to \cref{step2} and \cref{fastresidual}, respectively.

We summarize the fast Rayleigh--Ritz-SVD procedure and residual computation in \cref{alg:RecycleSingularSpace}, where the hat notation is omitted for simplicity and the computational complexities of the asymptotically dominant operations are listed. Here, \texttt{qr} denotes a function for computing a thin QR factorization, implemented, for instance, via Householder reflections or the more efficient Cholesky QR method \cite{fuk}, while \texttt{svd} denotes a function for computing an SVD. The overall computational cost is reduced to $\mO((pr\tilde{r} + \tilde{r}^2)m + 2pr^2n)$, scaling linearly with $p$.

\begin{algorithm}[t!]
\renewcommand{\algorithmicrequire}{\textbf{Input:}}
\renewcommand{\algorithmicensure}{\textbf{Output:}}
\caption{Fast recycling of the singular subspaces.}\label{alg:RecycleSingularSpace}
\begin{spacing}{1.05}
\begin{algorithmic}[1]
\Require $M, S \in \mathbb{C}^{m \times n}$, $z \in \mathbb{C}$, basis $V \in \mathbb{C}^{n \times pr}$, precomputed $H_1, H_2, H_3$ defined in \cref{H} and $W_1, \dots, W_6$ defined in \cref{W12} and \cref{W3456}, shift $\varepsilon$ and tolerance $\tau$.
\Ensure Approximate Ritz singular pairs $(\tilde{\Xi}, \tilde{X})$ and residual $\tilde{W}$.
\AlgoOutputRule
\Function{$[\tilde{X}, \tilde{\Xi}, \tilde{W}] = \tt{recySing}$}{$M, S, z, V, \{H_q\}_{q=1}^3, \{W_q\}_{q=1}^6, \varepsilon, \tau$}
\State $[G, \Theta] = {\tt{eig}}(H_1 - zH_2 - z^*H_2^* + |z|^2H_3 + \varepsilon I_{pr})$
\State $\tilde{r} = \min\{j : r \le j < pr,\ |\theta_j-\theta_{j+1}| > \theta_{pr}\ema/\tau\}$
\State $[\sim, \tilde{R}] = {\tt{qr}}((W_1 - zW_2)G_{1{:}\tilde{r}})$ \Comment{$\mathcal{O}((pr\tilde{r}+\tilde{r}^2)m)$}
\State $[\sim, \tilde{\Xi}, \tilde{G}] = {\tt{svd}}(\tilde{R})$, $\tilde{\Xi} \gets \tilde{\Xi}_{1{:}r,1{:}r}$
\State $\tilde{X} = VG_{1{:}\tilde{r}}\tilde{G}_{1{:}r}$ \Comment{$\mathcal{O}(pr^2n)$}
\State $\tilde{W} = (W_3 - zW_4 - z^*W_5 + |z|^2W_6)G_{1{:}\tilde{r}}\tilde{G}_{1{:}r} - \tilde{X}\tilde{\Xi}^2$ \Comment{$\mathcal{O}(pr^2n)$}
\State Return $\tilde{X}$, $\tilde{\Xi}$, and $\tilde{W}$
\EndFunction
\end{algorithmic}
\end{spacing}
\end{algorithm}

% Note that orthogonalizing $\hat{S}$ and constructing $\hat{H}_1$ and $\hat{H}_2$ are not included in \cref{alg:RecycleSingularSpace}. These entail a cost of $\mO(b^2r^2(m+n))$ flops. However, this cost is a one-off overhead that can be amortized over a multitude of distinct points. We defer the discussion about this to \cref{sec:upSingSp}.

% In the most extreme case, namely, after orthogonalizing $\hat{S}$ and forming $\hat{H}_1, \hat{H}_2, \hat{H}_3$, we immediately compute a new set of $b$ points and rebuild $\hat{S}$ the averaged cost is $\mO(br^2(m+n))$, which is also proportional to $b$.

% The complexity is reduced to $\mO (\widetilde{r}^2m+(b+1)r\widetilde{r}n)$, which is now proportional to $b$.

\subsubsection{Adaptive update of the recycling subspace}\label{sec:upSingSp}
To continually benefit from recycling without incurring excessive computational overhead, we must update the recycling set $\mS$ and the recycling subspace $\mV$, allowing the number $p$ to be adjusted adaptively.

Suppose that we are currently at a grid point $z$ that has not been visited previously. We first apply \cref{alg:RecycleSingularSpace} to obtain approximate Ritz singular values, Ritz singular vectors, and their residuals. We then test whether the residual satisfies the backward-stability criterion
\begin{align}
\lVert \tilde{W}_{1{:}1} \rVert \leq \max \{\eta\tilde{\xi}_1 \lVert M(z) \rVert,\ \lVert M(z) \rVert^2\ema\}, \label{tolC}
\end{align}
where $\eta$ is the backward-stability tolerance \cite{bag}. If this criterion is not satisfied, we employ LOBPCG-SVD to refine the Ritz singular vectors to the desired accuracy. Once the Ritz singular vectors are refined, the point $z$ is added to the recycling set $\mS$, and the corresponding right singular subspace $\operatorname{span}\{X\}$ is appended to the recycling subspace $\mV$. In other words, the recycling points are precisely the grid points at which the refinement of Ritz singular vectors is carried out.

Although the complexity of \cref{alg:RecycleSingularSpace} depends only linearly on the number $p$ of recycling points, we still cannot afford to continually add new singular subspaces to $\mV$. Therefore, before adding $X$ to $V$, we determine whether any columns of the current $V$ are no longer sufficiently relevant and can be discarded. To this end, we adaptively prune the recycling subspace $\mV$ by first identifying the largest $j \leq p$ such that the inequality
\begin{align}
\lVert X_{1{:}1} - \mP_j X_{1{:}1} \rVert \leq \omega \lVert X_{1{:}1} - \mP_0 X_{1{:}1}\rVert, \label{omegaCond}
\end{align}
is satisfied for a given threshold $\omega \in [1, \infty)$. Here, $\mP_j$ denotes the orthogonal projector onto $\operatorname{span}\{X^{j+1}, \dots, X^{p}\}$. The norms on the left- and right-hand sides of \cref{omegaCond} represent the projection errors of $X_{1{:}1}$ onto $\operatorname{span}\{X^{j+1}, \dots, X^p\}$ and $\operatorname{span}\{X^{1}, \dots, X^p\}$, respectively. If \cref{omegaCond} holds for a moderate value of $\omega$ and a given $j$, it suggests that discarding $\operatorname{span}\{X^{1}, \dots, X^j\}$ causes little loss in the ability of the reduced recycling subspace to represent $X_{1{:}1}$. This is precisely the strategy we employ to keep the size of $\mV$ manageable.

In the extreme case $\omega = 1$, \cref{omegaCond} is rarely satisfied and, consequently, we keep adding new singular subspaces without removing any obsolete information. This results in an ever-growing recycling subspace and a high per-iteration cost for \cref{alg:RecycleSingularSpace}. At the other extreme, when $\omega = \infty$, the recycling subspace $\mV$ contains only the right singular vectors from the most recently visited grid point, which often leads to poor-quality approximate Ritz singular vectors. In practice, we find that $\omega \in [1.2, 2.0]$ usually provides satisfactory performance.

To check whether \cref{omegaCond} holds for a given $j$, we compute the thin block QL factorization of $[X^{1}\ \dots\ X^{p}]$ as
\begin{align}
[X^{1}\ \dots\ X^{p}] = V L, \label{QLfact}
\end{align}
where $V = [V^{1}\ \dots\ V^{p}]$ has orthogonal columns and $L \in \mathbb{C}^{pr \times pr}$ is a lower block-triangular matrix with each block of size $r \times r$. Thus, the projection error can be computed as
\begin{align*}
X_{1{:}1} -\mP_j X_{1{:}1} = X_{1{:}1} - V_j V_j^* X_{1{:}1},
\end{align*}
where $V_j = [V^{j+1} \ \dots \ V^{p}]$. Suppose that $l$ is the largest such $j$ for which \cref{omegaCond} holds. We then remove $s_1,\ldots,s_l$ from $\mS$ and discard the subspace $\operatorname{span}\{X^{1},\dots,X^{l}\}$ by removing the first $l$ block columns of $V$.

To add $\operatorname{span}\{X\}$ to $\mV$, we apply a block Gram--Schmidt-type orthogonalization \cite{car3} of $X$ against $V^{l+1},\ldots,V^p$, obtaining
\begin{align*}
V^{p+1}U_{p+1,p+1} = X - \sum_{j=l+1}^{p} V^j U_{j,p+1},
\end{align*}
where $U_{j,p+1}=(V^j)^*X\in\mathbb{C}^{r\times r}$ ($j = l+1, \dots, p$) are the projection coefficients, and $V^{p+1}\in\mathbb{C}^{n\times r}$ and $U_{p+1,p+1}\in\mathbb{C}^{r\times r}$ are obtained from the thin QR factorization of the right-hand side. Let $L_{j,k}$ denote the $(j,k)$th block of the pre-updated $L$. After the new singular subspace $X$ is appended, the QL factorization \cref{QLfact} becomes
\begin{align}
[X^{l+1}\ \dots\ X^p\ X] =
\underbrace{
[V_l\ V^{p+1}]
}_{\mathclap{\textstyle\text{new $V$}}}
\underbrace{
\begin{bmatrix}
L_{l+1,l+1} &                   &                 & U_{l+1, p+1} \\
\vdots            & \ddots            &                 & \vdots \\[0.3ex]
L_{p,l+1}   & \dots             & L_{p,p}   & U_{p, p+1} \\
0                 & \cdots            & 0               & U_{p+1,p+1}
\end{bmatrix}
}_{\mathclap{\textstyle\text{new $L$}}}.
\label{blkGS}
\end{align}

For notational simplicity, we continue to denote the new factors by $V$ and $L$, although the updated $L$ is no longer block lower triangular. The remaining task is to eliminate the blocks $U_{l+1,p+1},\dots,U_{p,p+1}$
in the last column to restore the block triangular structure. This is accomplished by a sequence of block unitary transformations. For $j=l+1,\dots,p$, we form the stacked block matrix
\begin{align*}
Y_j =
\begin{bmatrix}
U_{p+1,p+1}\\[0.5ex]
U_{j,p+1}
\end{bmatrix}
\in\mathbb{C}^{2r\times r}
\end{align*}
and compute its QR factorization $Y_j = Q_j^{\mathrm{loc}} R_j^{\mathrm{loc}}$, where $Q_j^{\mathrm{loc}}\in\mathbb{C}^{2r\times2r}$ is unitary. Thus, it follows that
\begin{align*}
(Q_j^{\mathrm{loc}})^*
\begin{bmatrix}
U_{p+1,p+1}\\[0.5ex]
U_{j,p+1}
\end{bmatrix}
=
\begin{bmatrix}
\sim\\[0.5ex]
0
\end{bmatrix}.
\end{align*}
We then embed $Q_j^{\mathrm{loc}}$ into an identity matrix to form a block unitary matrix $Q_j\in\mathbb{C}^{(p-l+1)r\times(p-l+1)r}$ so that pre-multiplying $L$ by $Q_j^*$ is equivalent to applying $(Q_j^{\mathrm{loc}})^*$ to the $(j-l)$th and $(p-l+1)$th block rows of $L$. Thus, updating $V$ and $L$ via $V \gets VQ_j$ and $L \gets Q_j^*L$ annihilates $U_{j,p+1}$ while preserving the orthogonality of $V$. After $p-l$ such transformations, $L$ becomes block lower triangular again, and the thin QL factorization is restored. The total cost of this update is $\mathcal{O}((p-l)r^2n)$.

The auxiliary matrices $H_1$, $H_2$, $H_3$ and $W_1, \dots, W_6$ are updated in a similar manner. The leading blocks of these matrices are simply discarded when the subspace is truncated. Once $V^{p+1}$ is available, the associated blocks $MV^{p+1}$ and $SV^{p+1}$ are computed to update $W_1$ and $W_2$, while $W_3, \dots, W_6$ are updated using $M^*MV^{p+1}$, $M^*SV^{p+1}$, $S^*MV^{p+1}$, and $S^*SV^{p+1}$. We update $H_1$, $H_2$, and $H_3$ by computing $W_3^*V^{p+1}$, $W_5^*V^{p+1}$, and $W_6^*V^{p+1}$. Finally, consistency with the rotated basis is maintained by applying the unitary transformations $Q_j$ through the updates
\begin{align*}
H_q &\gets Q_j^* H_q Q_j, \quad q=1,2,3, \\
W_q &\gets W_q Q_j, \quad q=1,\ldots,6.
\end{align*}

% In fact, one could also use a sequence of Givens rotations to eliminate the blocks $R_{l+1,p+1},\dots,R_{p,p+1}$, with the same asymptotic complexity. However, the block unitary rotations can be implemented using Level-3 BLAS operations, whereas classical Givens updates in LAPACK are based on Level-1/2 BLAS operations, which makes the block unitary transformations more efficient in practice.

We summarize the overall procedure for updating the recycling subspace in \cref{alg:UpdateRecycle}, where an additional parameter $\breve{r} \le r$ is introduced to allow the generalization of $X_{1{:}1}$ in \cref{omegaCond} to $X_{1{:}\breve{r}}$ for use in \cref{sec:recyprojection}.

\begin{algorithm}[t!]
\renewcommand{\algorithmicrequire}{\textbf{Input:}}
\renewcommand{\algorithmicensure}{\textbf{Output:}}
\caption{Adaptive update of recycling subspace}\label{alg:UpdateRecycle}
\begin{spacing}{1.05}
\begin{algorithmic}[1]
\Require $M, S \in \mathbb{C}^{m \times n}$, new singular subspace basis $X \in \mathbb{C}^{n \times r}$, current recycling basis $V \in \mathbb{C}^{n \times pr}$ and block lower triangular factor $L \in \mathbb{C}^{pr \times pr}$, an integer $\breve{r} \leq r$, scaling parameter $\omega$, auxiliary matrices $\{H_q\}_{q=1}^3$ and $\{W_q\}_{q=1}^6$.
\Ensure Updated $V$, $L$, $\{H_q\}_{q=1}^3$, and $\{W_q\}_{q=1}^6$ in place.
\AlgoOutputRule
\Function{\tt{updateRecy}}{$M, S, X, V, L, \{H_q\}_{q=1}^3, \{W_q\}_{q=1}^6, \omega, \breve{r}$}
\State $\rho_0 = \lVert (I - VV^*) X_{1{:}\breve{r}} \rVert$
\For{$j = 1, \dots, p$}
    \State $V_j = V_{jr+1{:}pr}$
    \State Compute $\rho_j = \lVert (I - V_j V_j^*) X_{1{:}\breve{r}} \rVert$
    \Comment{$\mathcal{O}((p-j)\breve{r}rn)$}
    \State \textbf{if} $\rho_j/\rho_0 > \omega$ \textbf{break}
\EndFor
\State $l = j - 1$
\State Orthogonalize $X$ against $V_l$ and update $V$, $L$ in \cref{blkGS}
\Comment{$\mathcal{O}((p-l+1)r^2n)$}
\State Update $W_1, W_2$ by $MV^{p+1}$, $SV^{p+1}$
\State Update $W_3, \dots, W_6$ by $M^*MV^{p+1}, M^*SV^{p+1}, S^*MV^{p+1}, S^*SV^{p+1}$
\State Update $H_1, H_2, H_3$ by $W_3^*V^{p+1}, W_5^*V^{p+1}, W_6^*V^{p+1}$
\Comment{$\mathcal{O}((p-l)r^2n)$}
\For{$j = l+1, \dots, p$}
    \State Construct block unitary matrix $Q_j$
    \State $V \gets V Q_j$
    \Comment{$\mathcal{O}(r^2n)$}
    \State $L \gets Q_j^* L$
    \State $H_q \gets Q_j^* H_q Q_j, \quad q=1,2,3$
    \State $W_q \gets W_q Q_j, \quad q=1,\ldots,6$
    \Comment{$\mathcal{O}(r^2m)$}
\EndFor
\State $p \gets p - l + 1$
\EndFunction
\end{algorithmic}
\end{spacing}
\end{algorithm}

\subsection{Grid traversal and full triangularization-based algorithm}\label{sec:traversal}
Before presenting the full algorithm for computing pseudospectra, we first discuss the traversal of the grid points. The most commonly used grid in pseudospectra computation is a Cartesian grid over a rectangular region of the complex plane. For such a grid, one of the simplest traversal strategies is a row-wise zigzag: the first row is traversed from left to right, the second from right to left, and so on. With this ordering, only the singular subspaces associated with the few most recently visited points on the same horizontal line are strongly correlated with that of the current point. It is therefore more sensible to traverse the grid in such a way that, for each grid point currently being visited, as many previously visited points as possible lie in its vicinity. To this end, we employ a batch-zigzag strategy. We first partition the grid into column batches, each with batch size $b$:
\begin{align}
\{x + iy,\ x + i(y + \Delta y),\ \dots,\ x + i[y + (b-1)\Delta y]\}. \label{batch}
\end{align}
We then traverse the grid in nested loops---the outer loop performs a row-wise zigzag over the batches, while in the inner loop the $b$ points within each batch are visited vertically. \Cref{fig:column} shows the batch-zigzag ordering with column batches in a $9\times 9$ grid with $b=3$, whereas \cref{fig:row} shows the corresponding version with row batches, which serves the same purpose. For column (row) batches with row-wise (column-wise) zigzag traversal, the parameter $b$ should roughly match the number of grid points in the vertical (horizontal) direction whose right singular subspaces associated with the few smallest singular values exhibit significant correlations. Choosing $b$ either too large or too small may reduce the effectiveness of the recycling process.

\begin{figure}[t!]
\centering
\subfloat[]{\includegraphics[width=0.46\textwidth]{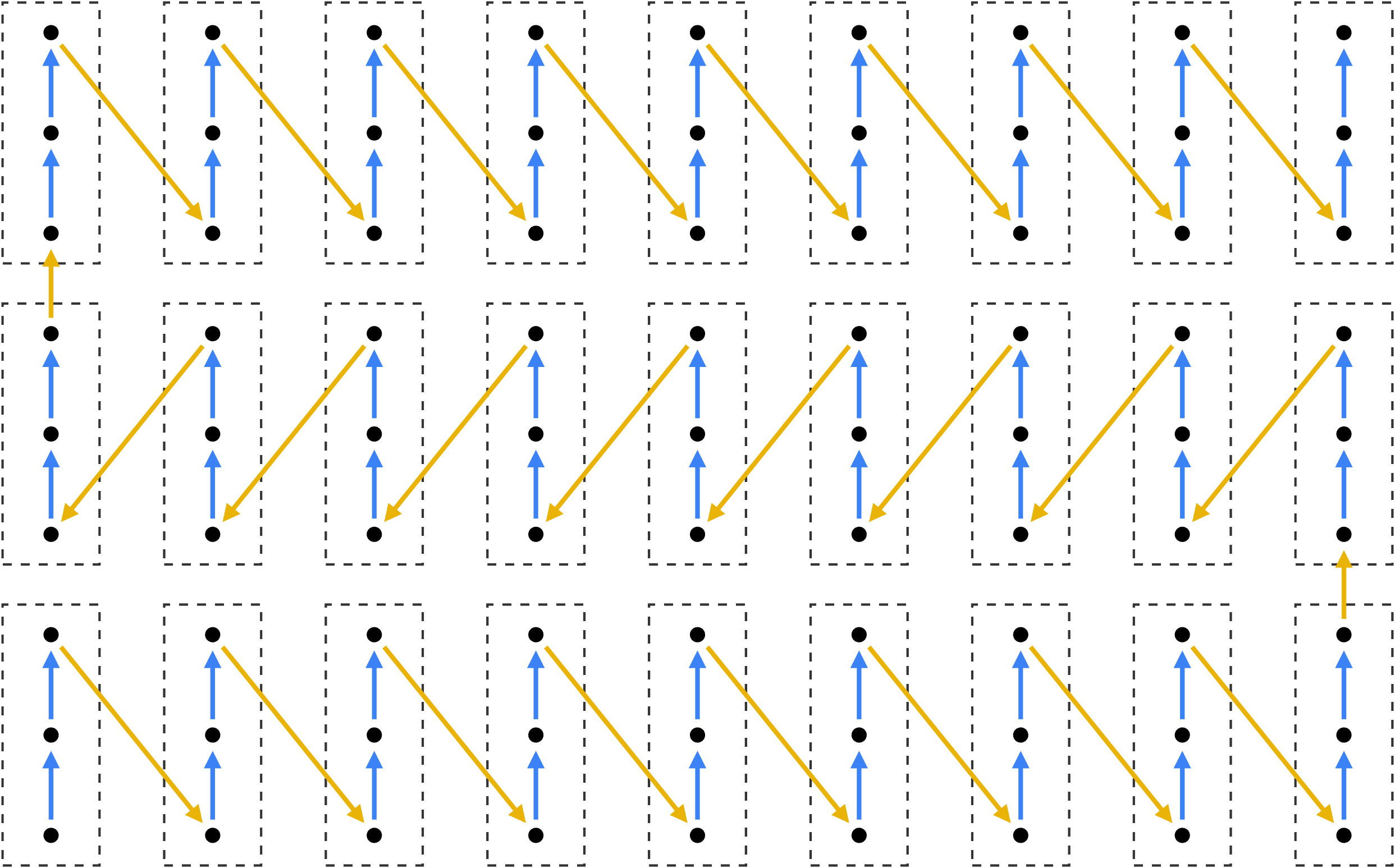}\label{fig:column}}
\hfill
\subfloat[]{\includegraphics[width=0.46\textwidth]{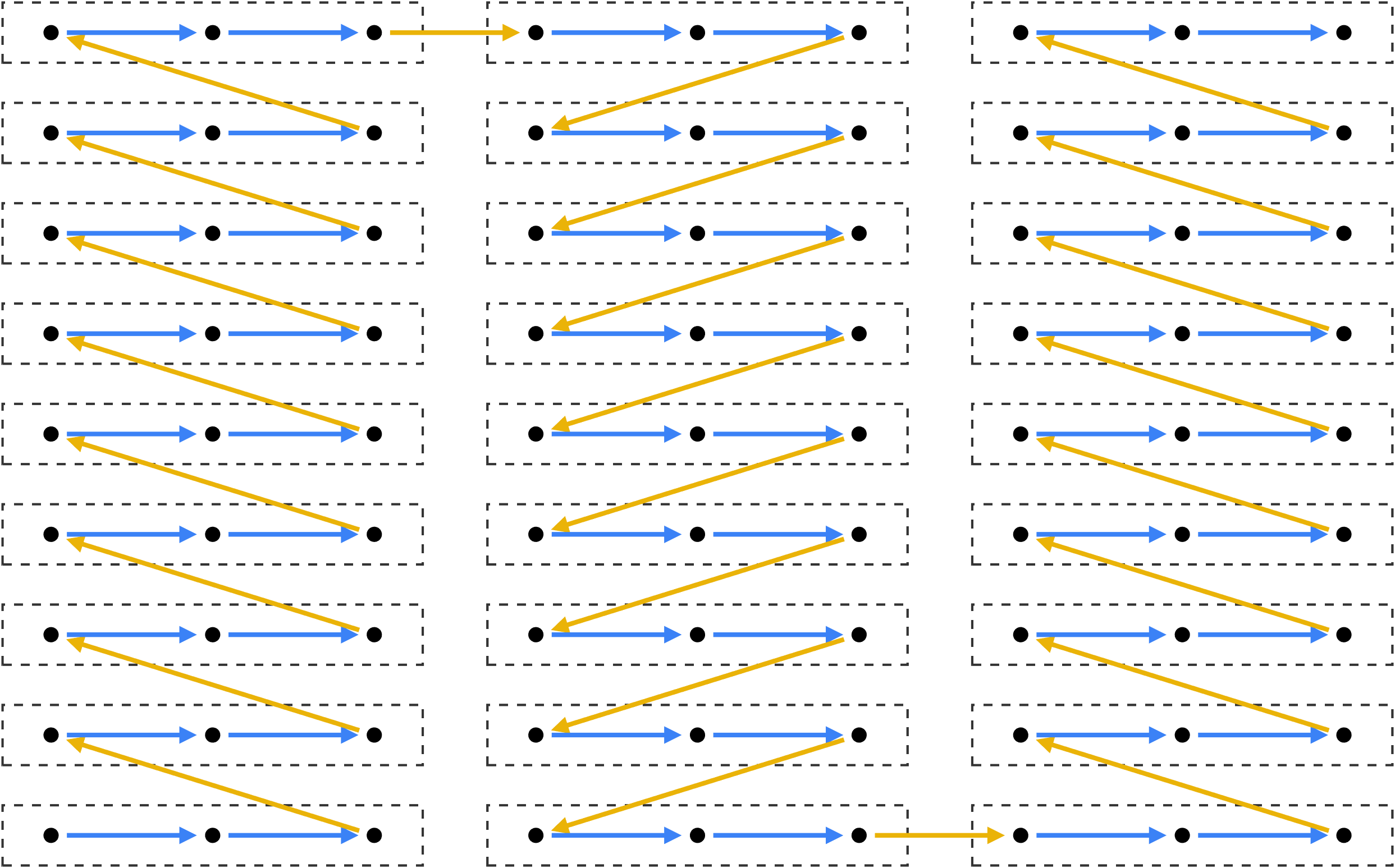}\label{fig:row}}
\caption{Batch-based zigzag traversal on a $9 \times 9$ grid with $b=3$: (a) column batches with row-wise zigzag. (b) row batches with column-wise zigzag.}\label{fig:batch-zigzag-traversal}
\end{figure}

The full algorithm for computing pseudospectra is summarized in \cref{alg:psTriRecy}. The function $[M,S]=$ \texttt{preliminary}$(C)$ performs the appropriate preliminary reduction described at the beginning of \cref{sec:prelimred} and detailed in \cref{sec:prelred} of the supplementary material. In line 12, the LOBPCG-SVD function \texttt{lobpcgSvd} is called. This function takes as input the matrix whose smallest singular pairs are to be computed, the preconditioner, the initial right singular subspace, and the desired number of converged right singular vectors, and returns the corresponding right singular vectors and singular values. See \cref{sec:lobpcgsvd} of the supplementary material for details.

At each grid point, \cref{alg:RecycleSingularSpace} first produces approximate Ritz singular values, Ritz singular vectors, and their residuals. The approximation obtained using recycled information is accepted if \cref{tolC} is satisfied. If not, the recycled Ritz singular vectors are used as the initial singular subspace for LOBPCG-SVD refinement applied to $R(z)^{-*}$. Finally, the smallest singular values of $M(z)$ is obtained as reciprocal of the computed largest singular value of $R(z)^{-*}$. We use $R(z)^{-*}$ rather than $R(z)^{-1}$ because their singular values are identical, while the right singular vectors of $R(z)^{-*}$ coincide with those of $M(z)$.

\begin{algorithm}[t!]
\renewcommand{\algorithmicrequire}{\textbf{Input:}}
\renewcommand{\algorithmicensure}{\textbf{Output:}}
\caption{Computing pseudospectra by singular-subspace recycling and triangularization.}\label{alg:psTriRecy}
\begin{spacing}{1.05}
\begin{algorithmic}[1]
\Require $C \in \mathbb{C}^{m \times n}$, grid $\mG = \{z_\ell\}_{\ell=1}^{|\mG|}$, batch size $b$, shift $\varepsilon$ and tolerances $\tau$, $\omega$, $\eta$.
\Ensure $\{\sigma_{\ell, 1}\}_{\ell=1}^{|\mG|}$, where $\sigma_{\ell, 1}$ is the smallest singular value of $C - z_\ell I$.
\AlgoOutputRule
\Function{$\{\sigma_{\ell, 1}\}_{\ell=1}^{|\mG|} = \tt{psTriRecy}$}{$C, \mG, b, \varepsilon, \tau, \omega, \eta$}
\State $[M, S] = {\tt{preliminary}}(C)$ \Comment{see \cref{sec:prelred}}
\State Reorder $\mG$ by batch zigzag with batch size $b$
\State Initialize $V, L, \{H_q\}_{q=1}^3$ and $\{W_q\}_{q=1}^6$
\For{$\ell = 1, \dots, |\mG|$}
    \State $z = z_\ell$
    \State $[\tilde{X}, \tilde{\Xi}, \tilde{W}] = {\tt{recySing}}(M, S, z, V, \{H_q\}_{q=1}^3, \{W_q\}_{q=1}^6, \varepsilon, \tau)$
    \If{$\cref{tolC}$ holds}
        \State $\sigma_{\ell, 1} = \tilde{\xi}_1$ and \textbf{continue}
    \Else
        \State $[\sim, R(z)] = {\tt{qr}}(M(z))$
        \State $[X, \Sigma] = {\tt{lobpcgSvd}}(R(z)^{-*}, I_{n}, \tilde{X}, 1)$
        \State ${\tt{updateRecy}}(M, S, X, V, L, \{H_q\}_{q=1}^3, \{W_q\}_{q=1}^6, \omega, 1)$
        \State $\sigma_{\ell, 1} = 1/\sigma_1$
    \EndIf
\EndFor
\State Return $\{\sigma_{\ell, 1} \}_{\ell=1}^{|\mG|}$
\EndFunction
\end{algorithmic}
\end{spacing}
\end{algorithm}

\section{Recycling the preconditioners}\label{sec:recyprojection}
Algorithm~\ref{alg:psTriRecy} substantially accelerates the computation of pseudospectra---the recycling approach, together with the fast Rayleigh--Ritz-SVD procedure, provides sufficiently accurate approximations to the desired singular values and singular vectors, thereby avoiding expensive refinement steps that would otherwise be carried out at every grid point in a non-recycling method. However, whenever refinement is necessary, the dense or sparse QR factorizations and triangular solves (lines 11 and 12 in \cref{alg:psTriRecy}) may still be costly, especially for dense matrices and sparse matrices with substantial fill-in.

To address this issue, in the refinement phase we bypass the QR factorization of $M(z)$ altogether by applying LOBPCG-SVD directly to $C(z)$ with a given symmetric positive-definite preconditioner $K \in \mathbb{C}^{n \times n}$ \cite{ben2,sco2,sco1} satisfying
\begin{align*}
K \approx N(z).
\end{align*}
With a sufficiently effective preconditioner $K$, LOBPCG-SVD converges rapidly to the smallest singular values and the corresponding singular vectors of $C(z)$. Since the spectral property of $C(z)$ may vary significantly over $z \in \Omega$, the same preconditioner $K$ cannot, in general, be reused at different grid points. Moreover, constructing $K$ from scratch at every grid point is impractical, as computing an effective preconditioner is often expensive. Hence, our strategy is to employ the two-level preconditioner \cite{tan} built upon $K$ and recycle it whenever possible.

\subsection{The two-level preconditioner}

The convergence of LOBPCG has been studied extensively in \cite{kny,ney,sta} when applied to compute the smallest eigenvalues of a symmetric positive-definite matrix $A$ using a symmetric positive-definite preconditioner $K$. In what follows, we use the effective spectral condition number of the preconditioned matrix $K^{-1}A$ as a measure of the convergence of LOBPCG with block size $r$
\begin{align*}
\kappa_r^{\mathrm{eff}}(K^{-1}A) = \frac{\lambda_{\max}(K^{-1}A)}{\lambda_{r+1}(K^{-1}A)}.
\end{align*}
In our context, applying LOBPCG-SVD to $C(z)$ is equivalent to computing its smallest eigenvalues by applying LOBPCG to $N(z)$. Thus, the relevant effective spectral condition number is
\begin{align}
\kappa_r^{\mathrm{eff}}(K^{-1}N(z)) = \frac{\bar{\lambda}_n}{\bar{\lambda}_{r+1}}, \label{condLOBPCG}
\end{align}
where $\bar{\lambda}_1 \leq \cdots \leq \bar{\lambda}_{r+1} \leq \cdots \leq \bar{\lambda}_n$ denote the eigenvalues of the pencil $(N(z),K)$ in ascending order.

Since $N(z)$ is often very ill-conditioned, a base preconditioner $K$ may bring the largest eigenvalue $\bar{\lambda}_n$ close to one while leaving $\bar{\lambda}_{r+1}$ still very small. To further reduce the effective spectral condition number \cref{condLOBPCG}, we define a two-level preconditioner as follows. Given a base preconditioner $K$ for $N(z)$ and a projection subspace spanned by $Z\in \mathbb{C}^{n \times k}$, the two-level preconditioner $K_{Z}$ is defined through its inverse \cite{tan}
\begin{align}
K_{Z}^{-1} = K^{-1} + Z(Z^*N(z)Z)^{-1}Z^*. \label{twolevel}
\end{align}

The following lemma shows that, with a properly chosen $Z$, the two-level preconditioner improves upon the base preconditioner $K$.

\begin{lemma}\label{twolevelCond}
For $N(z)$ and a symmetric positive-definite preconditioner $K$, suppose that the eigenvalues of the pencil $(N(z), K)$ are $\bar{\lambda}_1, \dots, \bar{\lambda}_n$ in ascending order. Let $\bar{E}$ be the matrix that contains the $\bar{r}$ $K$-orthonormal eigenvectors $\bar{e}_1, \dots, \bar{e}_{\bar r}$ corresponding to $\bar{\lambda}_1, \dots, \bar{\lambda}_{\bar{r}}$. That is,
\begin{align}
N(z)\bar{E} = K\bar{E}\bar{\Lambda}, \quad \text{s.t.} \quad \bar{E}^*K\bar{E} = I_{\bar{r}}, \label{genM}
\end{align}
where $\bar{\Lambda} = \operatorname{diag}(\bar{\lambda}_1, \dots, \bar{\lambda}_{\bar{r}})$ and $\bar{E}= [\bar{e}_1, \dots, \bar{e}_{\bar{r}}]$. If $\bar{\lambda}_{\bar{r}+r+1} < 1$ and $\bar{\lambda}_{\bar{r}} + 1 < \bar{\lambda}_n$, then
\begin{align}
\kappa_{r}^{\mathrm{eff}}(K_{\bar{E}}^{-1}N(z)) = \frac{\bar{\lambda}_n}{\bar{\lambda}_{\bar{r}+r+1}}. \label{cond}
\end{align}
\end{lemma}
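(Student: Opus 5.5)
We will compute the full spectrum of $K_{\bar E}^{-1}N(z)$ explicitly, using the $K$-orthonormal eigenbasis of the pencil $(N(z),K)$. Write $N=N(z)$. Complete $\bar E$ to a full $K$-orthonormal eigenbasis $\mathcal{E}=[\bar e_1,\dots,\bar e_n]$ with $N\mathcal{E}=K\mathcal{E}\Lambda_{\mathrm{all}}$ and $\mathcal{E}^*K\mathcal{E}=I_n$. Here $\Lambda_{\mathrm{all}}=\operatorname{diag}(\bar\lambda_1,\dots,\bar\lambda_n)$. Then $K^{-1}=\mathcal{E}\mathcal{E}^*$ and $N=\mathcal{E}^{-*}\Lambda_{\mathrm{all}}\mathcal{E}^{-1}$.

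In these coordinates $\bar E=\mathcal{E}[I_{\bar r};0]$, so
\begin{align*}
\bar E^*N\bar E=[I_{\bar r}\ 0]\,\Lambda_{\mathrm{all}}\,[I_{\bar r};0]=\bar\Lambda ,
\end{align*}
which is invertible because $N$ is positive definite (if $N$ is only semidefinite we assume $\bar\lambda_1>0$, as is implicit in \cref{twolevel}). Hence
\begin{align*}
K_{\bar E}^{-1}=\mathcal{E}\bigl(I_n+\operatorname{diag}(\bar\Lambda^{-1},0)\bigr)\mathcal{E}^*.
\end{align*}

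Multiplying by $N$ gives
\begin{align*}
K_{\bar E}^{-1}N=\mathcal{E}\bigl(I_n+\operatorname{diag}(\bar\Lambda^{-1},0)\bigr)\Lambda_{\mathrm{all}}\mathcal{E}^{-1},
\end{align*}
which is similar to a diagonal matrix. Its eigenvalues are therefore $\bar\lambda_j+1$ for $j\le\bar r$, and $\bar\lambda_j$ for $j>\bar r$. This is the key step. It is also the one where care is needed: the cross terms vanish only because $\bar E$ consists of exact $K$-orthonormal eigenvectors, which makes $K^{-1}$ and the coarse correction simultaneously diagonal in the basis $\mathcal{E}$.

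What remains is to order these eigenvalues. The shifted eigenvalues satisfy $\bar\lambda_j+1\ge1$ for $j\le\bar r$. The unshifted ones satisfy $\bar\lambda_{\bar r+1}\le\dots\le\bar\lambda_{\bar r+r+1}<1$. So the smallest $r+1$ eigenvalues of the preconditioned matrix are exactly $\bar\lambda_{\bar r+1},\dots,\bar\lambda_{\bar r+r+1}$, and the $(r+1)$st smallest is $\bar\lambda_{\bar r+r+1}$. This uses the hypothesis $\bar\lambda_{\bar r+r+1}<1$, which also forces $\bar r+r+1\le n$.

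For the largest eigenvalue, the candidates are $\bar\lambda_n$ (note $n>\bar r$) and $\max_{j\le\bar r}(\bar\lambda_j+1)=\bar\lambda_{\bar r}+1$. The latter is smaller than $\bar\lambda_n$ by the hypothesis $\bar\lambda_{\bar r}+1<\bar\lambda_n$, so $\lambda_{\max}=\bar\lambda_n$. Substituting both values into the definition of $\kappa_r^{\mathrm{eff}}$ gives \cref{cond}.
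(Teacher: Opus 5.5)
Your proof is correct and matches the paper's argument: the paper checks directly that $K_{\bar E}^{-1}N(z)\bar e_j=(\bar\lambda_j+1)\bar e_j$ for $j\le\bar r$ and $K_{\bar E}^{-1}N(z)\bar e_j=\bar\lambda_j\bar e_j$ for $j>\bar r$, which is your diagonalization in the full $K$-orthonormal eigenbasis written vector by vector, and then orders the eigenvalues using the same two hypotheses. Your remarks on completing $\bar E$ to a full $K$-orthonormal basis and on needing $\bar\lambda_1>0$ for $\bar E^*N(z)\bar E$ to be invertible make explicit points that the paper leaves implicit.
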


\begin{proof}
It follows from \cref{twolevel} that
\begin{align*}
K_{\bar{E}}^{-1}N(z)\bar{e}_j =
\begin{cases}
(\bar{\lambda}_j + 1)\bar{e}_j, & j = 1, \dots, \bar{r} \\
\bar{\lambda}_j\bar{e}_j, & j = \bar{r}+1, \dots, n.
\end{cases}
\end{align*}
Hence the eigenvalues of $K_{\bar{E}}^{-1}N(z)$ are
\begin{align*}
\bar{\lambda}_1 + 1, \dots, \bar{\lambda}_{\bar{r}} + 1, \bar{\lambda}_{\bar{r}+1}, \dots, \bar{\lambda}_n,
\end{align*}
which may no longer be in ascending order. However, since we assume that $\bar{\lambda}_{\bar{r}+r+1} < 1$ and $\bar{\lambda}_{\bar{r}} + 1 < \bar{\lambda}_n$, the largest eigenvalue remains $\bar{\lambda}_n$, and the $(r+1)$th smallest eigenvalue is $\bar{\lambda}_{\bar{r}+r+1}$, from which \cref{cond} follows.
\end{proof}

This lemma shows that the two-level preconditioner $K_{\bar{E}}$ improves upon the base preconditioner $K$ by elevating the smallest $\bar{r}$ eigenvalues of $(N(z), K)$ by $1$, which in turn reduces the effective spectral condition number. Consequently, even a small value of $\bar{r}$ can yield a substantial improvement when only a few eigenvalues lie close to zero. Furthermore, the two-level preconditioner can be recycled across different grid points by keeping $K$ fixed and updating only the projection matrix $Z$.

\subsection{Recycling the projection subspaces, refining, and updating}
The task now reduces to the computation of $\bar{E}$. Rather than computing $\bar{E}$ by applying LOBPCG directly to the generalized eigenvalue problem \cref{genM}, which would require multiplying $K$ by multiple vectors at each iteration, we transform it into the equivalent standard eigenvalue problem
\begin{align}
F^{-*}N(z)F^{-1}\bar{X} = \bar{X}\bar{\Lambda}, \label{barX}
\end{align}
where $\bar{X} = F\bar{E}$ and $F \in \mathbb{C}^{n \times n}$ satisfies $K = F^*F$.

Moreover, to make the correspondence with \cref{genForm,affineN} explicit, we introduce the analogous quantities
\begin{align*}
\bar{M} = C F^{-1}, \quad \bar{S} = I F^{-1}, \quad \bar{M}(z) := \bar{M} - z\bar{S},
\end{align*}
where $\bar{M}$ and $\bar{S}$ are introduced only as formal products; in practice, they are not formed explicitly, and the action of $F^{-1}$ is effected implicitly through linear system solves. With these notations, we can define
\begin{align*}
\bar{T}(z) := (\bar{M} - z\bar{S})^*(\bar{M} - z\bar{S}) = F^{-*}N(z)F^{-1},
\end{align*}
which places the preconditioned eigenproblem \cref{barX} within the framework of \cref{sec:recycsing}. Let $\bar{X}^1, \dots, \bar{X}^{\bar{p}}$ denote the eigenvector matrices associated with previously visited grid points at which \cref{barX} is solved. We refer to
\begin{align}
\bar{\mV} = \operatorname{span}\{\bar{X}^1, \dots, \bar{X}^{\bar{p}}\}
= \operatorname{span}\{\bar{V}\}, \label{recyProj}
\end{align}
as the \emph{recycling subspace for projection}. Whenever no ambiguity arises, we continue to refer to it simply as the recycling subspace. Here, $\bar{V}$ is the column-orthonormal matrix whose columns span $\bar{\mV}$. The corresponding auxiliary matrices $\{\bar{H}_q\}_{q=1}^3$ and $\{\bar{W}_q\}_{q=1}^6$ can then be constructed exactly as in \cref{sec:rss} with $\bar{M}$, $\bar{S}$, and $\bar{V}$ replacing $M$, $S$, and $V$, respectively. With these components available, we extract useful information from the recycling subspace by projecting the eigenvalue problem of $\bar{T}(z)$ onto $\bar{\mV}$, which yields the Rayleigh--Ritz problem
\begin{align}
\bar{H}(z)\check{G} = \check{G} \check{\Theta}, \label{HG}
\end{align}
where $\bar{H}(z) = \bar{V}^* \bar{T}(z) \bar{V}$ and $\check{\Theta} = \operatorname{diag}(\check{\theta}_1, \dots, \check{\theta}_{\bar{r}})$ contains the Ritz singular values. Analogous to \cref{rayleighN}, the projected matrix $\bar{H}(z)$ can be assembled inexpensively. Once \cref{HG} is solved, the Ritz singular vector $\check{X}$ and the projection vector $\check{E}$ are obtained as
\begin{align*}
\check{X} = \bar{V}\check{G}_{1{:}\bar{r}}, \quad
\check{E} = (\bar{W}_2)_{1{:}n, {:}}\check{G}_{1{:}\bar{r}}.
\end{align*}
Similarly, the associated residual $\check{W}$ can be computed explicitly using the auxiliary matrices $\{\bar{W}_q\}_{q=3}^6$ following the same pattern as in \cref{fastresidual}.

The overall procedure for recycling the projection subspace is summarized in \cref{alg:recyProj}.

\begin{algorithm}[t!]
  \renewcommand{\algorithmicrequire}{\textbf{Input:}}
  \renewcommand{\algorithmicensure}{\textbf{Output:}}
  \caption{Recycling projection subspaces.}\label{alg:recyProj}
  \begin{spacing}{1.05}
  \begin{algorithmic}[1]
  \Require $\bar{M}, \bar{S} \in \mathbb{C}^{m \times n}$, $z \in \mathbb{C}$, basis $\bar{V} \in \mathbb{C}^{n \times \bar{p}\bar{r}}$, precomputed $\bar{H}_1, \bar{H}_2, \bar{H}_3 \in \mathbb{C}^{\bar{p}\bar{r} \times \bar{p}\bar{r}}$ and $\bar{W}_1, \dots, \bar{W}_6 \in \mathbb{C}^{n \times \bar{p}\bar{r}}$, and target block size $\bar{r}$.
  \Ensure Ritz singular pairs $(\check{X}, \check{\Theta})$ of $\bar{T}(z)$, projection vectors $\check{E}$, and residual $\check{W}$.
  \AlgoOutputRule
  \Function{$[\check{X}, \check{\Theta}, \check{E}, \check{W}] = \tt{recyProj}$}{$\bar{M}, \bar{S}, z, \bar{V}, \{\bar{H}_q\}_{q=1}^3, \{\bar{W}_q\}_{q=1}^6$}
  \State $[\check{G}, \check{\Theta}] = {\tt{eig}}(\bar{H}_1 - z\bar{H}_2 - \bar{z}\bar{H}_2^* + |z|^2\bar{H}_3)$, \ $\check{\Theta} \gets \check{\Theta}_{1{:}\bar{r},1{:}\bar{r}}$
  \State $\check{X} = \bar{V}\check{G}_{1{:}\bar{r}}$, \ $\check{E} = (\bar{W}_2)_{1{:}n, {:}}\check{G}_{1{:}\bar{r}}$ \Comment{$\mathcal{O}(\bar{p}\bar{r}^2n)$}
  \State $\check{W} = (\bar{W}_3 - z\bar{W}_4 - z^*\bar{W}_5 + |z|^2\bar{W}_6)\check{G}_{1{:}\bar{r}} - \check{X}\check{\Theta}$ \Comment{$\mathcal{O}(\bar{p}\bar{r}^2n)$}
  \State Return $\check{X}$, $\check{\Theta}$, $\check{E}$, and $\check{W}$
  \EndFunction
  \end{algorithmic}
  \end{spacing}
\end{algorithm}

Since the projection subspace is recycled only for preconditioning purposes, we use a more relaxed convergence criterion than that in \cref{tolC} to determine whether the approximate Ritz singular pairs are sufficiently accurate. The backward-stability criterion for eigenvalue problems \cite{due} leads to the convergence criterion
\begin{align}
\lVert \check{W}_{j{:}j} \rVert \leq \eta \lVert \bar{T}(z) \rVert, \quad j = 1,\dots,\bar{r}. \label{tolTz}
\end{align}
% where $\check{r} \leq \bar{r}$ is the largest index satisfying $\check{\theta}_{\check{r}} \leq \tau_{\min}$. Here, $\tau_{\min}$ is a threshold the eigenvalues below which are the ones that we intend to control. In practice, we estimate $\lVert \bar{T}(z) \rVert$ by $(\sqrt{\bar{\lambda}_n^{\dagger}} + \gamma |\delta z|)^2$ using \cref{weyl}.
If \cref{tolTz} is not satisfied, we apply LOBPCG-SVD to $\bar{M}(z)$ using the approximate Ritz singular vectors $\check{X}$ as the initial guess, yielding $\bar{X}$ and $\bar{\Sigma} = \operatorname{diag}(\bar{\sigma}_1, \dots, \bar{\sigma}_{\bar{r}})$ as refinements of $\check{X}$ and $\check{\Theta}$, respectively. The recycling subspace $\bar{\mV}$ and its orthogonal basis $\bar{V}$ are updated in the same way as in \cref{sec:upSingSp}, except that the condition \cref{omegaCond} now becomes
\begin{align*}
\lVert \bar{X}_{1{:}\breve{r}} - \bar{\mP}_l\bar{X}_{1{:}\breve{r}} \rVert
\leq
\omega
\lVert \bar{X}_{1{:}\breve{r}} - \bar{\mP}_0\bar{X}_{1{:}\breve{r}} \rVert,
\end{align*}
where $\bar{\mP}_l$ is the orthogonal projector onto $\operatorname{span}\{\bar{X}^{l+1}, \dots, \bar{X}^{\bar{p}}\}$. Here, $\breve{r} \leq \bar{r}$ is the largest index such that $\bar{\sigma}_{\breve{r}}^2 \leq \tau_{\min}$; see \cref{tauRecycle} for the role and interpretation of $\tau_{\min}$.

\begin{algorithm}[t!]
  \renewcommand{\algorithmicrequire}{\textbf{Input:}}
  \renewcommand{\algorithmicensure}{\textbf{Output:}}
  \caption{Computing pseudospectra by singular-subspace recycling and preconditioning equipped with projection-subspace recycling.}\label{alg:psPrecRecy}
  \begin{spacing}{1.05} 
  \begin{algorithmic}[1]
  \Require $C \in \mathbb{C}^{m \times n}$, grid $\mG = \{z_\ell\}_{\ell=1}^{|\mG|}$, batch size $b$, shift $\varepsilon$ and tolerances $\tau_{\max}, \tau_{\min}$, $\tau$, $\omega$ and $\eta$.
  \Ensure $\{\sigma_{\ell,1}\}_{\ell=1}^{|\mG|}$, where $\sigma_{\ell,1}$ is the smallest singular value of $C - z_\ell I$.
  \AlgoOutputRule
  \Function{$\{\sigma_{\ell,1}\}_{\ell=1}^{|\mG|} = \tt{psPrecRecy}$}{$C, \mG, b, \varepsilon, \tau_{\max}, \tau_{\min}, \tau, \omega, \eta$}
  \State Reorder $\mG$ by batch zigzag with batch size $b$
  \State $M = C,\ S = I$ and initialize $V, L, \{H_q\}_{q=1}^3$ and $\{W_q\}_{q=1}^6$
  \State Set $z^{\dagger} = z_1$, compute $K = F^*F \approx N(z^{\dagger})$, $\gamma = \lVert F^{-1} \rVert_2$, and $\bar{\lambda}^{\dagger}_n$
  \State Set $\bar{M} = CF^{-1}$ and $\bar{S} = IF^{-1}$, and initialize $\bar{V}, \bar{L}, \{\bar{H}_q\}_{q=1}^3$, and $\{\bar{W}_q\}_{q=1}^6$
  \For{$\ell = 1, \dots, |\mG|$}
    \State $z = z_\ell$
    \State $[\tilde{X}, \tilde{\Xi}, \tilde{W}] = {\tt{recySing}}(M, S, z, V, \{H_q\}_{q=1}^3, \{W_q\}_{q=1}^6, \varepsilon, \tau)$
    \If{$\tilde{W}$ satisfies \cref{tolC}}
      \State $\sigma_{\ell,1} = \tilde{\xi}_1$ and \textbf{continue}
    \EndIf
    \State $[\check{X}, \check{\Theta}, \check{E}, \check{W}] = {\tt{recyProj}}(\bar{M}, \bar{S}, z, \bar{V}, \{\bar{H}_q\}_{q=1}^3, \{\bar{W}_q\}_{q=1}^6)$
    \If{$\check{W}$ satisfies \cref{tolTz}}
      \State $\bar{E} = \check{E}$, $\bar{\lambda}_{\bar{r}} = \check{\theta}_{\bar{r}}$
    \Else
      \State $[\bar{X}, \bar{\Sigma}] = {\tt{lobpcgSvd}}(\bar{M}(z), I_n, \check{X}, \bar{r})$, $\bar{E} = F^{-1}\bar{X}$, $\bar{\lambda}_{\bar{r}} = \bar{\sigma}_{\bar{r}}^2$
      \State $\breve{r} = \max\{j : 1 \leq j \leq \bar{r},\ \bar{\sigma}_j^2 \leq \tau_{\min}\}$
      \State ${\tt{updateRecy}}(\bar{M}, \bar{S}, \bar{X}, \bar{V}, \bar{L}, \{\bar{H}_q\}_{q=1}^3, \{\bar{W}_q\}_{q=1}^6, \omega, \breve{r})$
    \EndIf
    \State $ \bar{\lambda}_{n} = (\sqrt{\bar{\lambda}_n^{\dagger}} + \gamma |z - z^{\dagger}|)^2$
    \If{$\bar{\lambda}_{n} > \tau_{\max}$ or $\bar{\lambda}_{\bar{r}} < \tau_{\min}$}
      \State Set $z^{\dagger} = z$, compute $K = F^*F \approx N(z^{\dagger})$, $\gamma = \lVert F^{-1} \rVert_2$, and $\bar{\lambda}^{\dagger}_n$
      \State Set $\bar{M} = CF^{-1}$ and $\bar{S} = IF^{-1}$
      \State Initialize $\bar{V}, \bar{L}, \{\bar{H}_q\}_{q=1}^3$, and $\{\bar{W}_q\}_{q=1}^6$
      \State $[\bar{X}, \bar{\Sigma}] = {\tt{lobpcgSvd}}(\bar{M}(z), I_n, \check{X}, \bar{r})$, $\bar{E} = F^{-1}\bar{X}$
      \State $\breve{r} = \max\{j : 1 \leq j \leq \bar{r},\ \bar{\sigma}_j^2 \leq \tau_{\min}\}$
      \State ${\tt{updateRecy}}(\bar{M}, \bar{S}, \bar{X}, \bar{V}, \bar{L}, \{\bar{H}_q\}_{q=1}^3, \{\bar{W}_q\}_{q=1}^6, \omega, \breve{r})$
    \EndIf
    \State Construct $K_{\bar{E}}$ according to \cref{twolevel}
    \State $[X, \Sigma] = {\tt{lobpcgSvd}}(M(z), K_{\bar{E}}, \tilde{X}, 1)$
    \State $\sigma_{\ell,1} = \sigma_1$
    \State ${\tt{updateRecy}}(M, S, X, V, L, \{H_q\}_{q=1}^3, \{W_q\}_{q=1}^6, \omega, 1)$
  \EndFor
  \State Return $\{\sigma_{\ell,1}\}_{\ell=1}^{|\mG|}$
  \EndFunction
  \end{algorithmic}
  \end{spacing}
\end{algorithm}

\subsection{Reconstruction of the base preconditioner}
A base preconditioner may not be sufficiently effective over the entire grid, and therefore it must be updated whenever necessary. Suppose that the base preconditioner $K$ is constructed at a previously visited grid point $z^{\dagger}$. At a new grid point $z = z^{\dagger} + \Delta z$, let $\bar{\lambda}_{\bar{r}}$ denote the computed approximation to the $\bar{r}$th smallest eigenvalue of $\bar{T}(z)$. Since $\bar{\lambda}_{\bar{r}} \leq \bar{\lambda}_{\bar{r}+r+1}$, this quantity provides a computable lower bound for the denominator in \cref{cond}. On the other hand, the perturbation relation
\[
\bar{M}(z)=\bar{M}(z^{\dagger})-\Delta z\,\bar{S}
\]
together with Weyl's inequality \cite{ste} implies that
\begin{align*}
\bar{\lambda}_n \leq \big(\sqrt{\bar{\lambda}_n^{\dagger}} + \gamma |\Delta z|\big)^2,
\end{align*}
where $\gamma = \lVert F^{-1} \rVert_2$, and $\bar{\lambda}_n^{\dagger}$ is the largest eigenvalue of $(N(z^{\dagger}), K)$. By \cref{cond}, the effective condition number of the two-level preconditioner satisfies
\begin{align*}
\kappa_{r}^{\mathrm{eff}}(K_{\bar{E}}^{-1}N(z)) = \frac{\bar{\lambda}_n}{\bar{\lambda}_{\bar{r}+r+1}} \leq \frac{\big(\sqrt{\bar{\lambda}_n^{\dagger}} + \gamma |\Delta z|\big)^2}{\bar{\lambda}_{\bar{r}}}.
\end{align*}
Significant growth of $\kappa_{r}^{\mathrm{eff}}(K_{\bar{E}}^{-1}N(z))$ suggests the ineffectiveness of the current base preconditioner. We thus require
\begin{align}
\big(\sqrt{\bar{\lambda}_n^{\dagger}} + \gamma |\Delta z|\big)^2 \leq \tau_{\max}, \qquad \bar{\lambda}_{\bar{r}} \geq \tau_{\min} \label{tauRecycle}
\end{align}
to keep the effective spectral condition number of
$K_{\bar E}^{-1}N(z)$ from varying excessively. If \cref{tauRecycle} holds, we leave $K$ unchanged; otherwise, $K$ is recomputed at the new grid point $z$.

We are now able to assemble the full algorithm, listed in \cref{alg:psPrecRecy}, for computing the pseudospectra of a given matrix by recycling both the singular and projection subspaces.
  
\section{Experiments}\label{sec:exp}
In this section, we evaluate the performance of the proposed algorithms. All numerical experiments are implemented in \textsc{Julia} v1.12 and executed using a single thread on a desktop equipped with a 2.90 GHz Intel Core i5 processor and 16 GB of RAM.

All the experiments use the same parameter settings. We take the block sizes in \texttt{lobpcgSvd} to be $r=6$ and $\bar{r}=40$ for the singular and projection subspace refinements, respectively, while the tolerance $\eta$ is set to $10^{-4}$. For \texttt{recySing}, we use the shift $\varepsilon = 10^{-14}\lVert C \rVert^2$ and the tolerance $\tau = 10^{-10}$. For \texttt{updateRecy}, the threshold is set to $\omega = 1.6$, and the maximum number of recycling points is capped at $p_{\max}=60$. We construct the base preconditioner using the incomplete Cholesky factorization implemented in HSL\_MI28 \cite{sco2}. For projection subspace recycling, we let $\tau_{\min} = 10^{-3}$ and $\tau_{\max} = 30$. For the grid $\mG$, we always use an equispaced Cartesian grid with the same number $n_{\mathrm{grid}}$ of points in both directions.

We benchmark the proposed algorithms against the Lanczos-based method, which serves as the core algorithm of \textsc{EigTool} \cite{wri3,tre2}. Since our experiments involve only square matrices, the Lanczos-based method first upper-triangularizes the given matrix via Schur factorization and then applies the Lanczos iteration to the inverse of the triangular factor. In our implementation, the Lanczos iteration is effected by \textsc{KrylovKit}'s function \texttt{svdsolve} \cite{hae}, which is based on a thick-restarted partial Lanczos bidiagonalization. In the rest of this paper, we refer to our \textsc{Julia} implementation of the Lanczos-based method as \texttt{psInvLanc}, which is available in \cite{den1}.

\begin{table}[t]
  \caption{Test matrices.}
  \label{tab:matrices}
  \centering
  \resizebox{\textwidth}{!}{%
  \renewcommand{\arraystretch}{1.1}%
  \begin{tabular}{ccccc}
  \hline
  matrix & type & size & region & cond$_{\max}$  \\
  \hline
  landau\_$40\pi$& dense  & $5{,}000 \times 5{,}000$   & $[-1.1,1.2]\times[-1.1,1.1]$   & $1.32\times 10^{15}$ \\
  basor          & dense  & $2{,}000 \times 2{,}000$   & $[-4.0,6.5]\times[-5.0,2.0]$   & $7.76\times 10^8$ \\
  af23560        & sparse & $23{,}560 \times 23{,}560$ & $[-1.6,0.8]\times[-1.55,1.55]$ & $1.64\times 10^{11}$ \\
  skewlap3d      & sparse & $13{,}824 \times 13{,}824$ & $[-550,100]\times[-200,200]$   & $8.35\times 10^{13}$ \\
  sparse random  & sparse & $4{,}000 \times 4{,}000$   & $[0.45,0.55]\times[0.0,0.1]$   & $2.19\times 10^{6}$ \\
  \hline
  \end{tabular}%
  }
\end{table}

\begin{table}[t]
  \caption{Execution times and operation counts.}
  \label{tab:costdense}
  \centering
  \small
  \resizebox{\textwidth}{!}{%
  \renewcommand{\arraystretch}{1.1}%
  \begin{tabular}{ccc|ccc|ccc}
  \hline
  \multirow{2}{*}{matrix} & \multicolumn{2}{c|}{\tt{psInvLanc}}
  & \multicolumn{3}{c|}{\tt{psTriRecy}}
  & \multicolumn{3}{c}{\tt{psPrecRecy}} \\
  \cline{2-9}
  & time (s) & sv$_R$ & time (s) & sv$_R$ & mv$_M$ & time (s) & sv$_F$ & mv$_M$\\
  \hline
  landau\_$40\pi$ & $9{,}069$  & $821{,}658$  & $2{,}006$  & $63{,}276$  & $61{,}740$   &  &  & \\
  basor           & $7{,}251$   & $3{,}569{,}406$ & $1{,}352$  & $492{,}132$ & $72{,}990$   &  &  & \\
  af23560         & $129{,}936$ & $642{,}686$ & $8{,}150$ & $30{,}120$ & $27{,}540$ & $7{,}333$ & $499{,}176$ & $564{,}456$ \\
  skewlap3d       & $136{,}575$ & $336{,}134$ & $7{,}390$ & $26{,}016$ & $25{,}956$ & $3{,}968$ & $272{,}544$ & $381{,}594$ \\
  sparserandom   & $92{,}846$  & $465{,}586$ & $7{,}848$ & $60{,}348$ & $60{,}336$ & $1{,}606$ & $701{,}160$ & $924{,}780$ \\
  \hline
  \end{tabular}%
  }
\end{table}

Table~\ref{tab:matrices} lists the five test matrices used in our experiments, including two dense and three sparse matrices, along with their sizes and computational regions $\Omega$. Here, cond$_{\max}$ denotes the maximum condition number of $C(z)$ over the entire grid.

Table~\ref{tab:costdense} reports the overall computational cost of the three methods for $n_{\mathrm{grid}}=200$ and $b=20$ in terms of the total execution time and the numbers of three key matrix--vector operations:

\begin{itemize}[leftmargin=1.5em]
\item sv$_R$: the total number of triangular solves involving the triangular factor $R(z)$ in the Lanczos-based method or in \texttt{lobpcgSvd}.

\item sv$_F$: the total number of triangular solves involving the triangular factors $F$ and $F^*$ that occur in \texttt{psPrecRecy}. These solves arise in \texttt{updateRecy} for projection subspace recycling and in \texttt{lobpcgSvd} for both singular and projection subspace refinements.

\item mv$_M$: the total number of multiplications of $M$ by a vector. For \texttt{psTriRecy}, this operation is performed in \texttt{updateRecy}. For \texttt{psPrecRecy}, where $M=C$, this operation occurs in \texttt{updateRecy} for both singular and projection subspace recycling, as well as in \texttt{lobpcgSvd} for singular and projection subspace refinements.
\end{itemize}

For many of the matrix--vector operations that are effected through matrix--matrix operations, we count each such operation as multiple matrix--vector operations. For example, a matrix--matrix operation involving a matrix with $r$ or $\bar{r}$ columns is counted as $r$ or $\bar{r}$ matrix--vector operations. In addition, we omit the results of \texttt{psPrecRecy} for the two dense examples because $M(z)$ is already upper triangular, so neither additional QR factorizations nor preconditioning is required.

The figures presenting the results for all five examples follow a common layout. Panel (a) shows the computed pseudospectral contours, verifying that the proposed algorithms produce sensible results. Panel (b) shows the recycling points as blue dots (\pselegenddot{psePrelim}), namely the grid points at which the singular subspaces are refined by \texttt{lobpcgSvd} and stored for recycling. Panel (c) reports the corresponding recycling statistics of \texttt{recySing}, including the number $p$ of recycled singular subspaces and the value of $\tilde r$ determined by \cref{tau}. These quantities reflect the difficulty of singular subspace recycling.

For the two dense examples, panels (d)--(f) compare \texttt{psTriRecy} with the Lanczos-based method \texttt{psInvLanc} on increasingly finer grids with $n_{\mathrm{grid}}=200,400,800$. Panel (d) reports the operation counts, panel (e) shows the execution times, and panel (f) provides a breakdown of the total execution time, including the time spent on LOBPCG-SVD (\pselegendbox{pseLobpcg}), inverse Lanczos iteration (\pselegendbox{pseInvLanc}), \texttt{updateRecy} (\pselegendbox{pseUpdateRecy}), \texttt{recySing} (\pselegendbox{pseRecySing}), and \texttt{preliminary} (\pselegendbox{psePrelim}).

For the sparse examples, panels (d)--(f) instead summarize the performance of \texttt{psPrecRecy}. Panel (d) shows the grid points at which singular subspace recycling (\pselegenddot{psePrelim}), projection subspace recycling (\pselegenddot{psePredPts}), and reconstruction of the base preconditioner (\pselegendcross{black}) take place. Panel (e) reports statistics related to adaptive preconditioner recycling. Panel (f) presents a breakdown of the total execution time for all three algorithms into the time spent on LOBPCG-SVD (\pselegendbox{pseLobpcg}), inverse Lanczos iteration (\pselegendbox{pseInvLanc}), \texttt{updateRecy} (\pselegendbox{pseUpdateRecy}), \texttt{recySing} (\pselegendbox{pseRecySing}), preconditioner-related computations (\pselegendbox{psePrecond}), and \texttt{qr} (\pselegendbox{pseQr}). In the following subsections, we focus only on features that are specific to each example.

\subsection{\texorpdfstring{landau\_$40\pi$}{landau-40pi}}
\begin{figure}[t!]
\centering
\subfloat[\centering pseudospectra contours]{\includegraphics[width=0.32\linewidth]{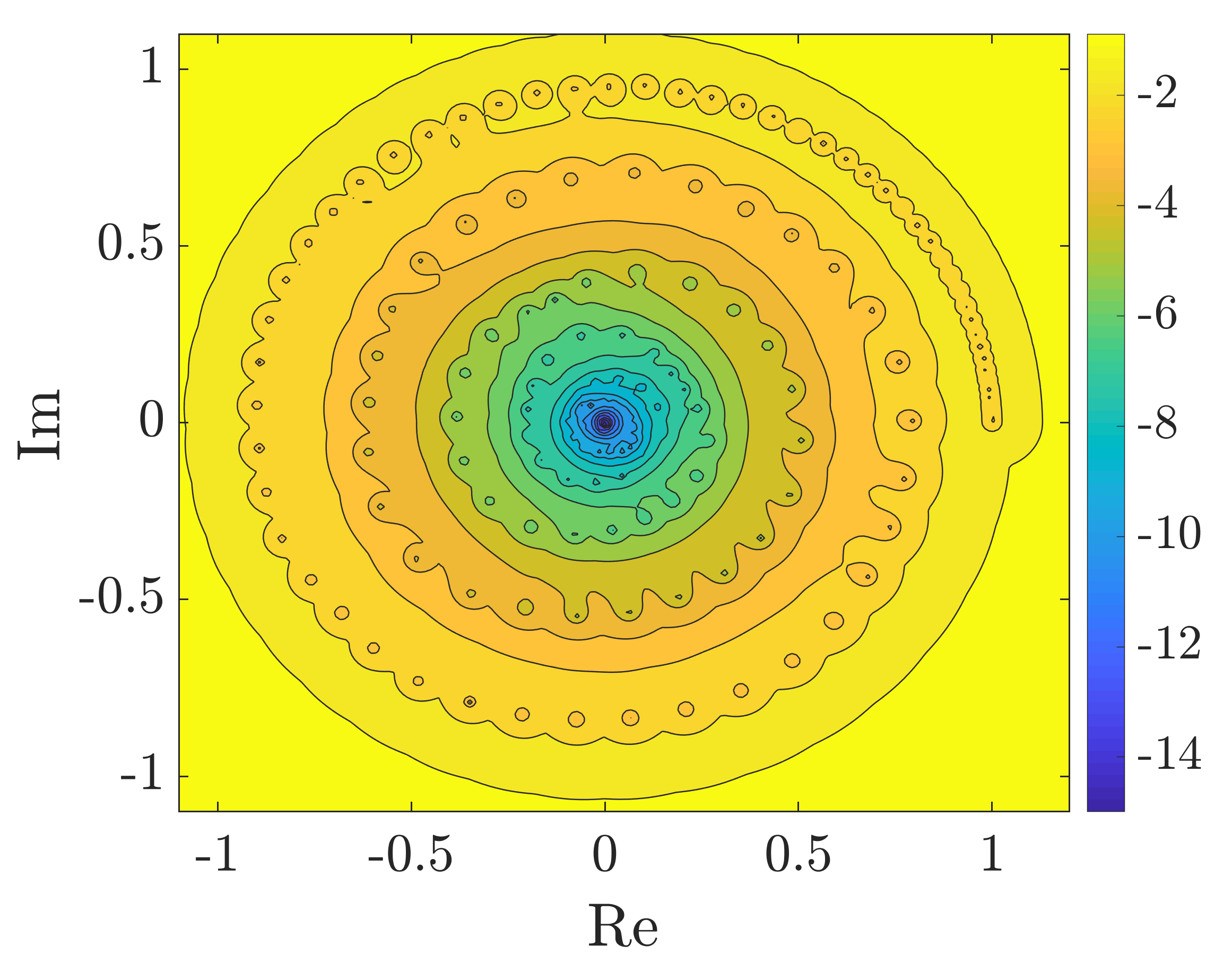}\label{fig:landau-contour}}
\hfill
\subfloat[\centering recycling points]{\includegraphics[width=0.32\linewidth]{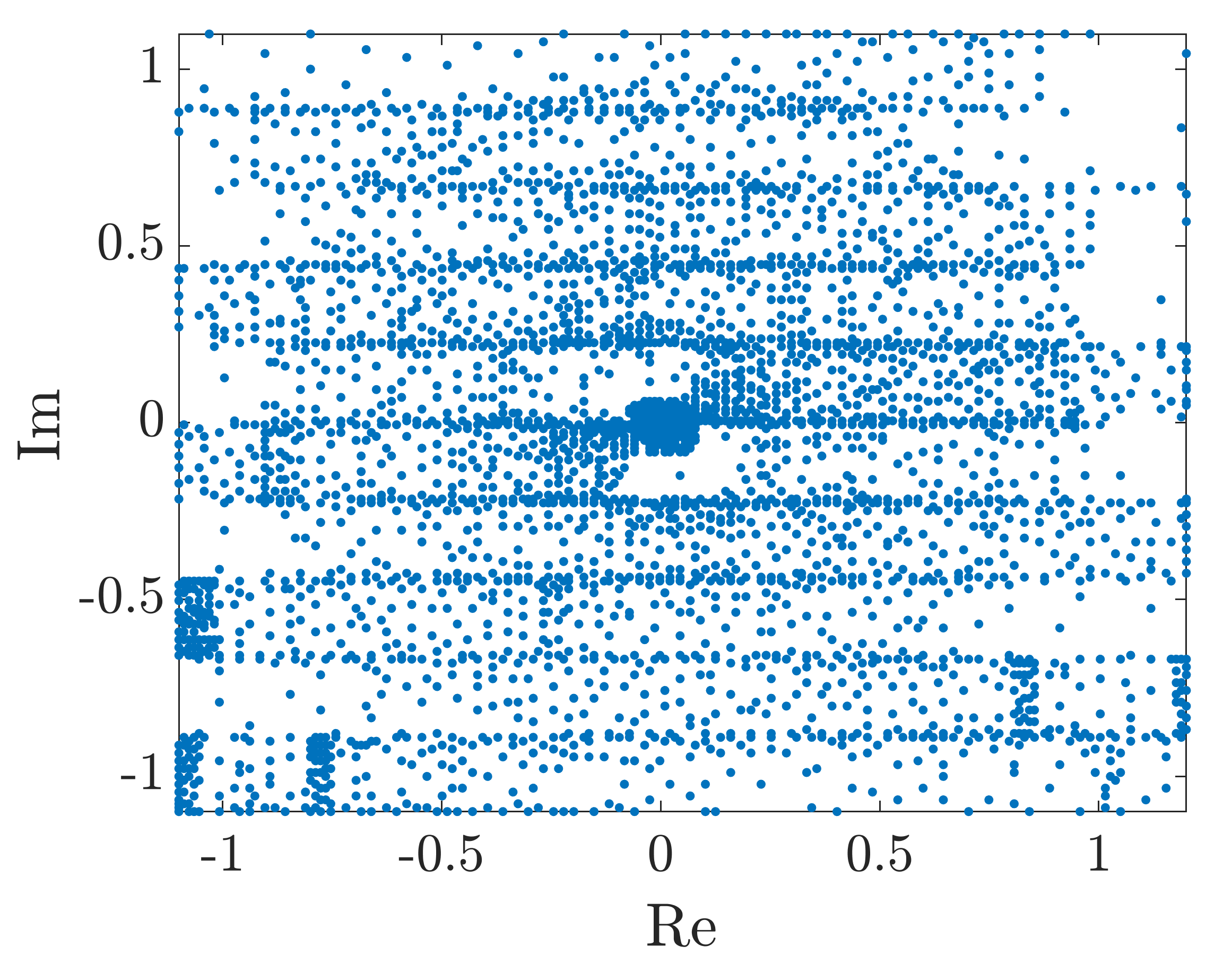}\label{fig:landau-pts}}
\hfill
\subfloat[\centering recycling statistics]{\includegraphics[width=0.32\linewidth]{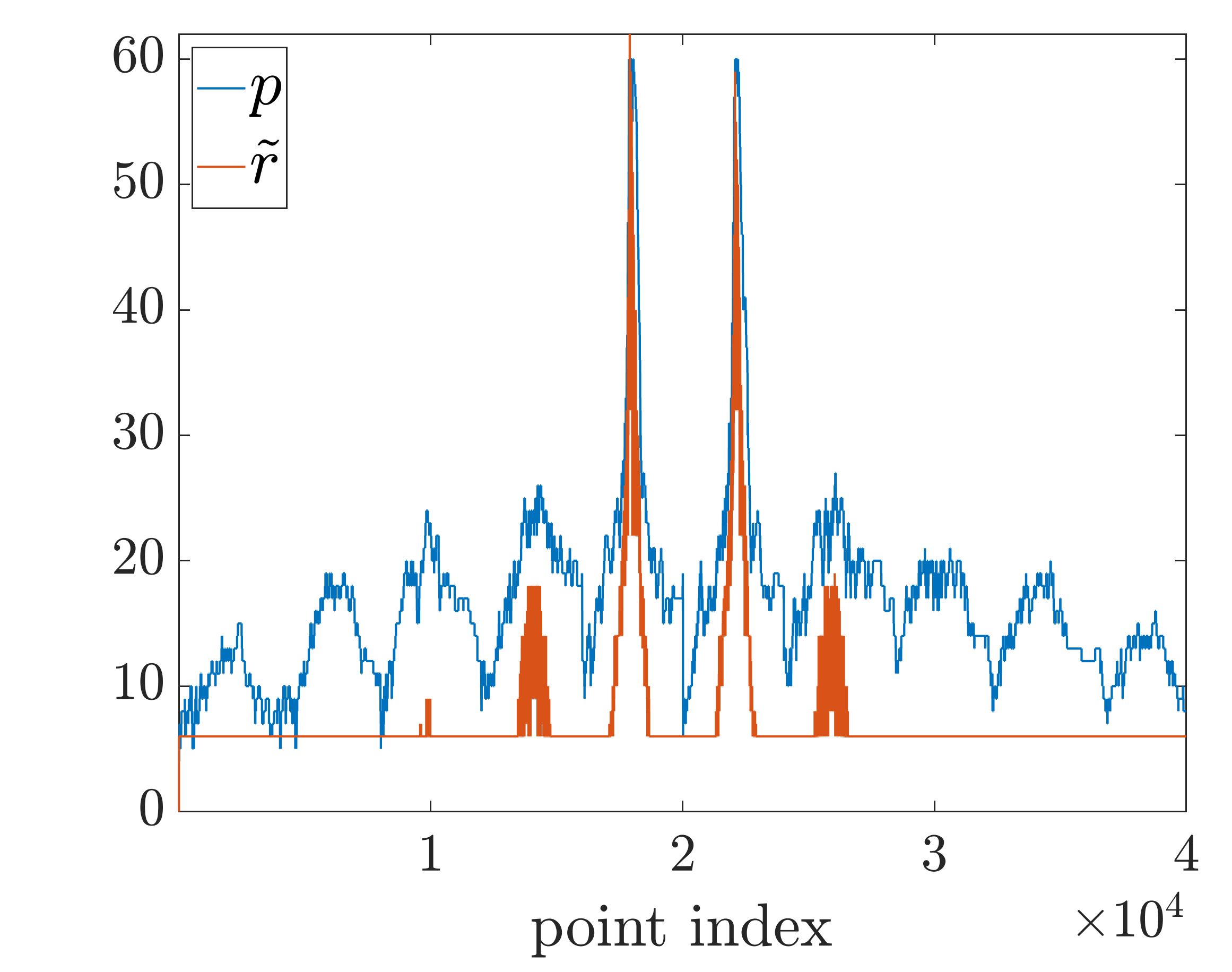}\label{fig:landau-stats}}
\par\medskip
\subfloat[\centering operation counts]{\includegraphics[width=0.32\linewidth]{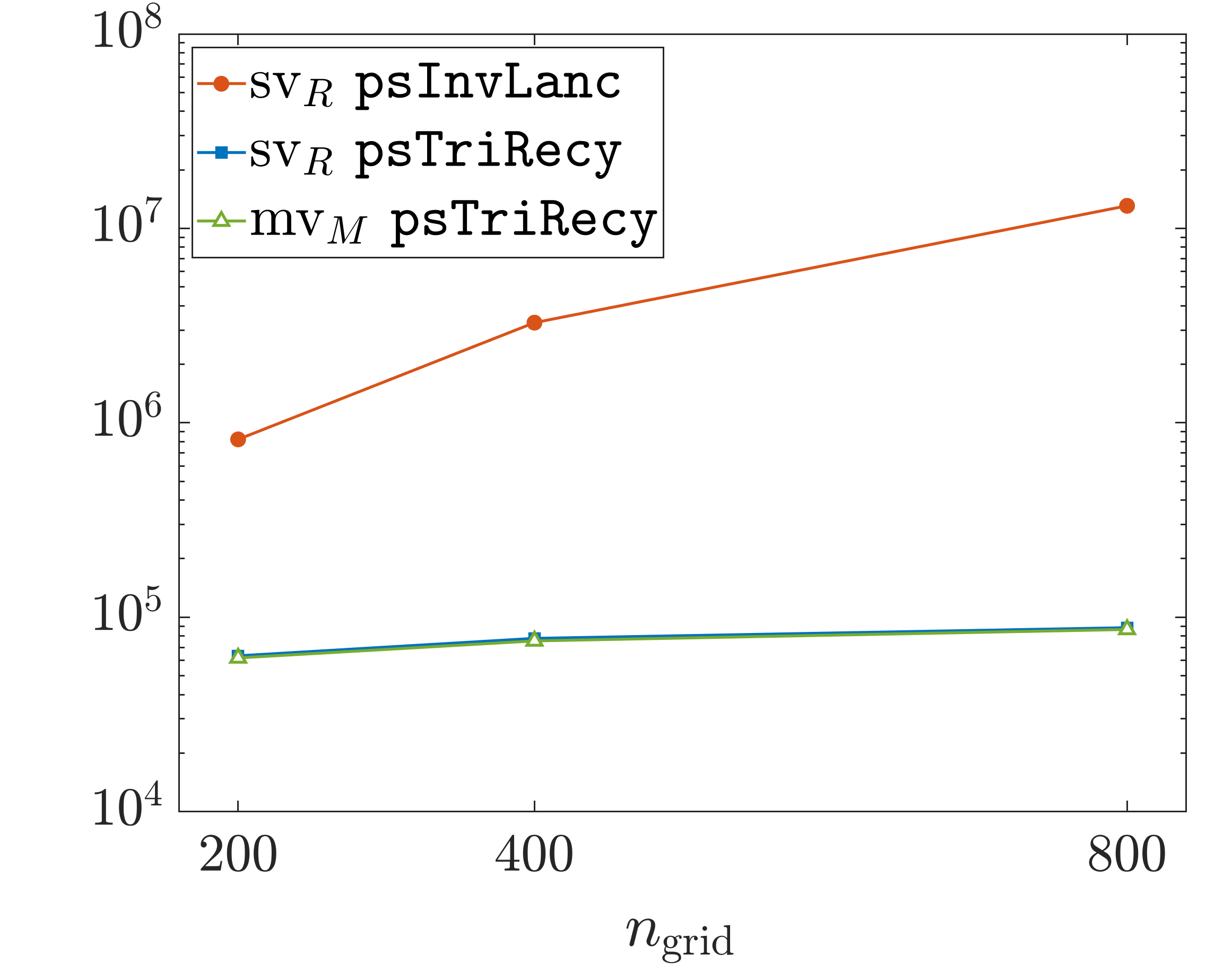}\label{fig:landau-ops}}
\hfill
\subfloat[\centering execution time]{\includegraphics[width=0.32\linewidth]{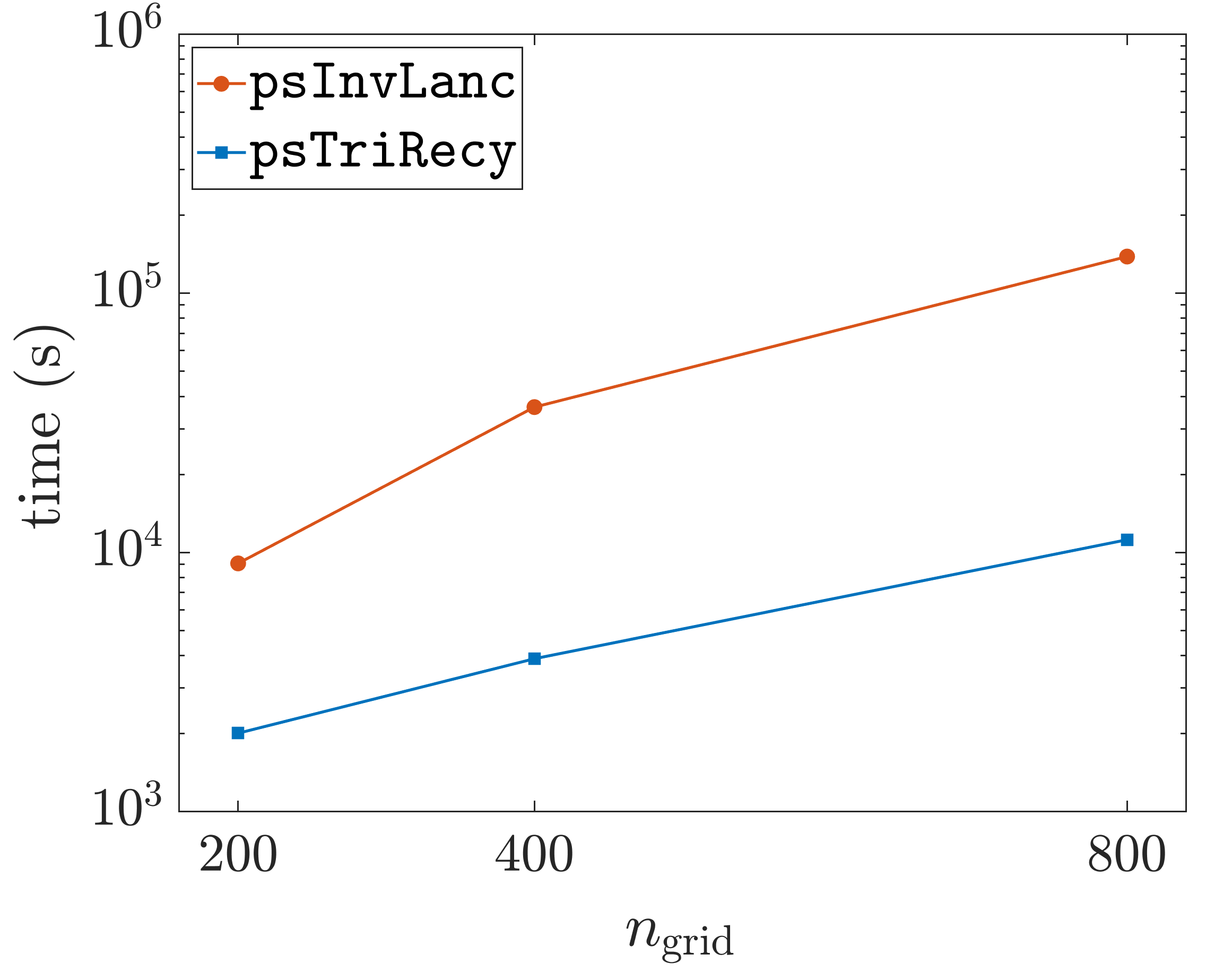}\label{fig:landau-elapsed}}
\hfill
\subfloat[\centering execution time breakdown]{\includegraphics[width=0.32\linewidth]{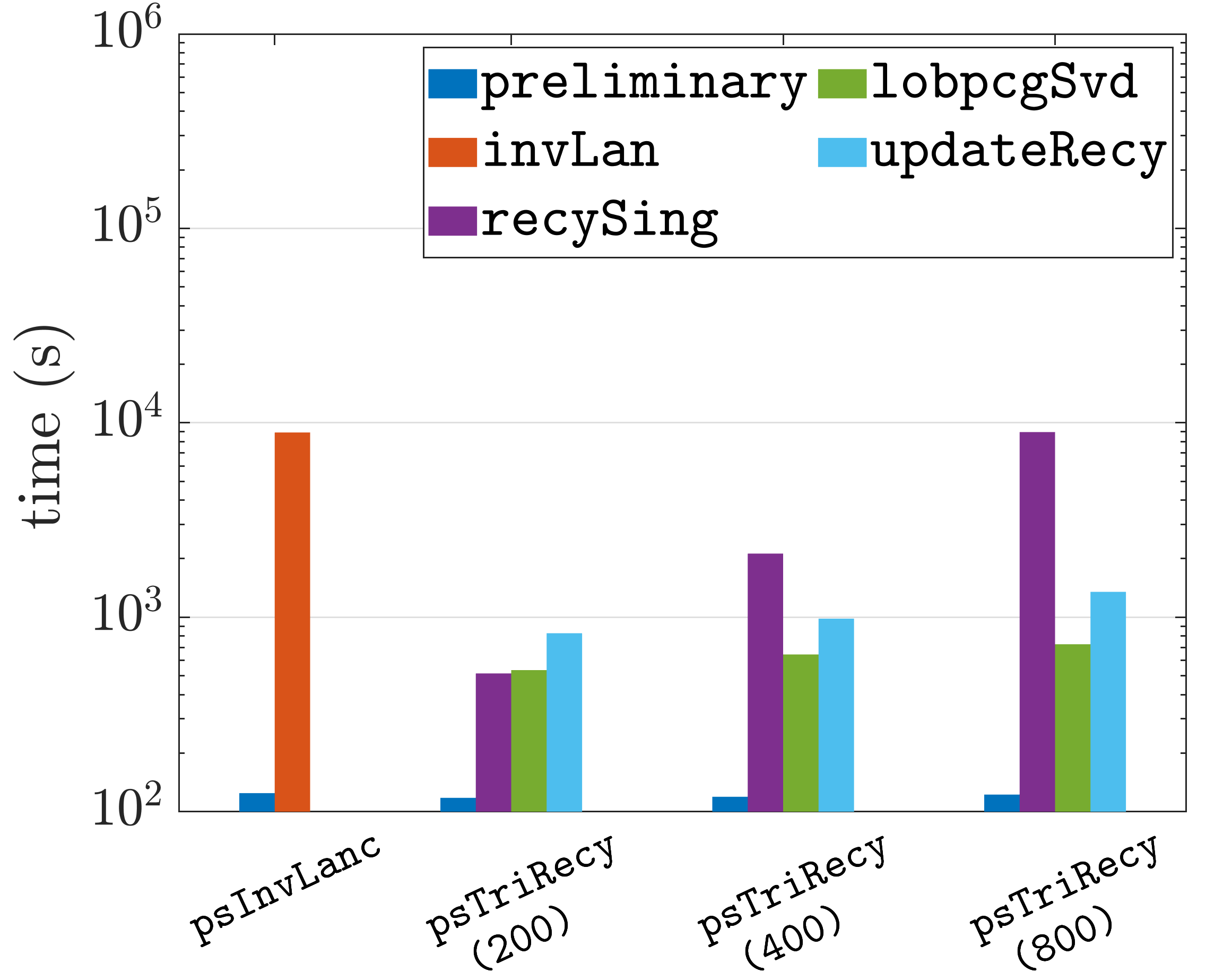}\label{fig:landau-time}}
\caption{Results for landau\_$40\pi$.}\label{fig:landau-results}
\end{figure}
Our first example is the matrix obtained by discretizing the Huygens--Fresnel operator for Fresnel number $40\pi$ with $5{,}000$ discretization points. This operator is used to model the laser problem and serves as a classical test problem in computing pseudospectra \cite[\S 60]{tre2}. The contours in \cref{fig:landau-contour} are a spot-on match to those obtained by ${\tt psInvLanc}$ and \textsc{EigTool}.

It is at only $3{,}430$ of the $40{,}000$ grid points that the refinement of the singular subspace via LOBPCG-SVD is required and the resulting right singular subspace is incorporated into the recycling subspace. These recycling grid points are plotted in \cref{fig:landau-pts}, serving as an indicator of the efficiency of the proposed recycling strategy. In other words, the right singular subspace is obtained without invoking LOBPCG-SVD at more than $90\%$ of the grid points. \cref{fig:landau-stats} further shows how the number $p$ of the recycled singular subspaces and the effective dimension $\tilde r$ are adapted across the entire grid. The two large spikes are associated with grid points near the origin, where recycling adaptively works by using large values of $p$ and $\tilde r$; in between, the singular subspaces have to be refined by LOBPCG-SVD. The adaptivity in recycling singular subspaces is crucial for efficiency, as it allows the fast Rayleigh--Ritz-SVD procedure to be applied to an effective subspace of minimal dimensions at a cost of $\mO(pr\tilde{r}n)$ flops, thereby avoiding Rayleigh--Ritz-SVD applied to the entire recycling subspace, which would require $\mO(p^2r^2n)$ flops.

The advantage of recycling becomes more pronounced as the grid becomes denser. As shown in \cref{fig:landau-ops,fig:landau-elapsed}, the operation counts and execution time of ${\tt psTriRecy}$ grow much more slowly than those of ${\tt psInvLanc}$. The corresponding speedups are $4.52$, $9.35$, and $12.37$ for $n_{\mathrm{grid}}=200$, $400$, and $800$, respectively. The breakdown in \cref{fig:landau-time} shows that, as the grid becomes denser, the dominant cost shifts from repeated singular-subspace refinement via LOBPCG-SVD to the cheaper adaptive recycling procedure ${\tt recySing}$.

\subsection{basor}
\begin{figure}[t!]
\centering
\subfloat[\centering pseudospectral contour]{\includegraphics[width=0.32\linewidth]{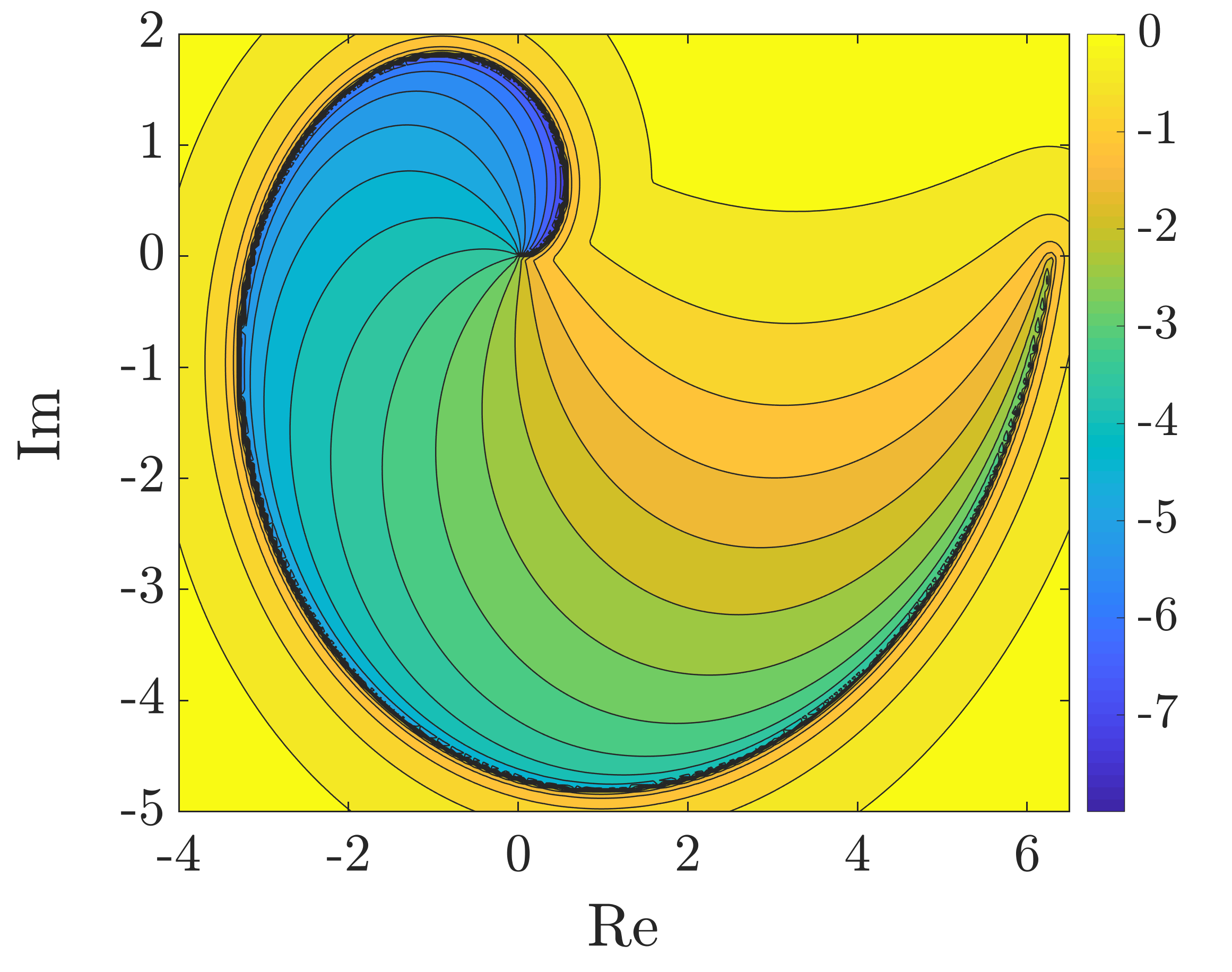}\label{fig:basor-contour}}
\hfill
\subfloat[\centering recycling points]{\includegraphics[width=0.32\linewidth]{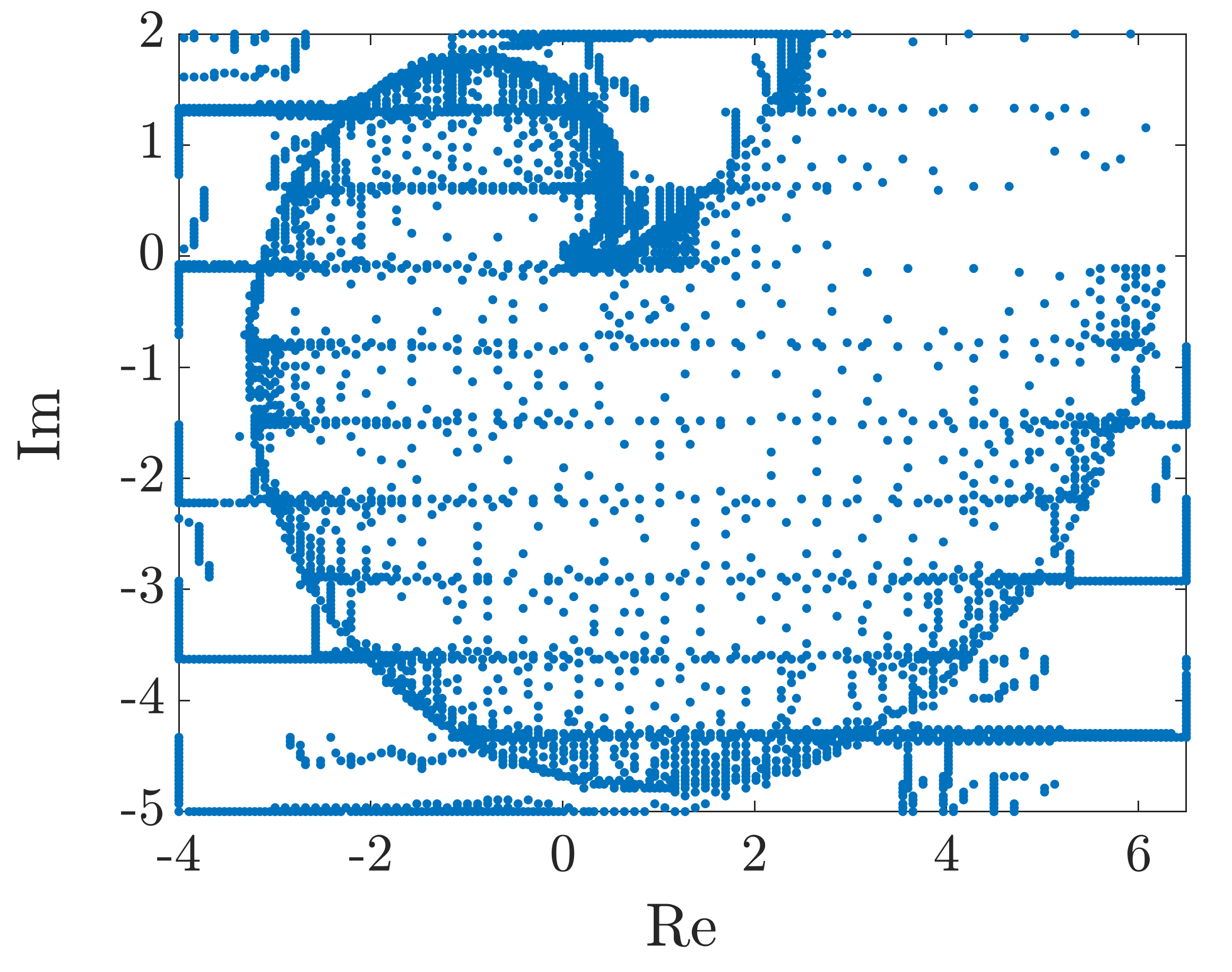}\label{fig:basor-pts}}
\hfill
\subfloat[\centering recycling statistics]{\includegraphics[width=0.32\linewidth]{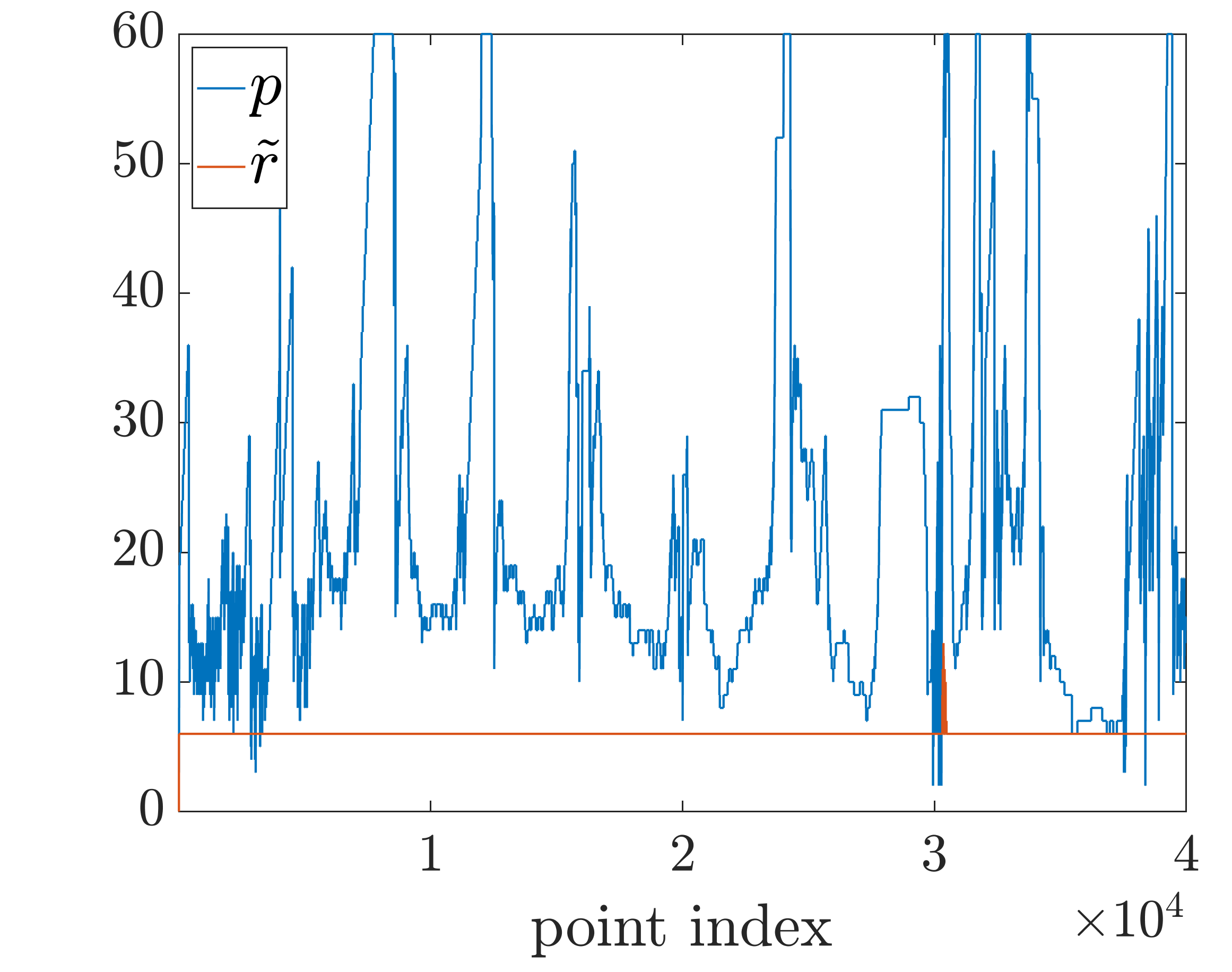}\label{fig:basor-stats}}
\par\medskip
\subfloat[\centering operation counts]{\includegraphics[width=0.32\linewidth]{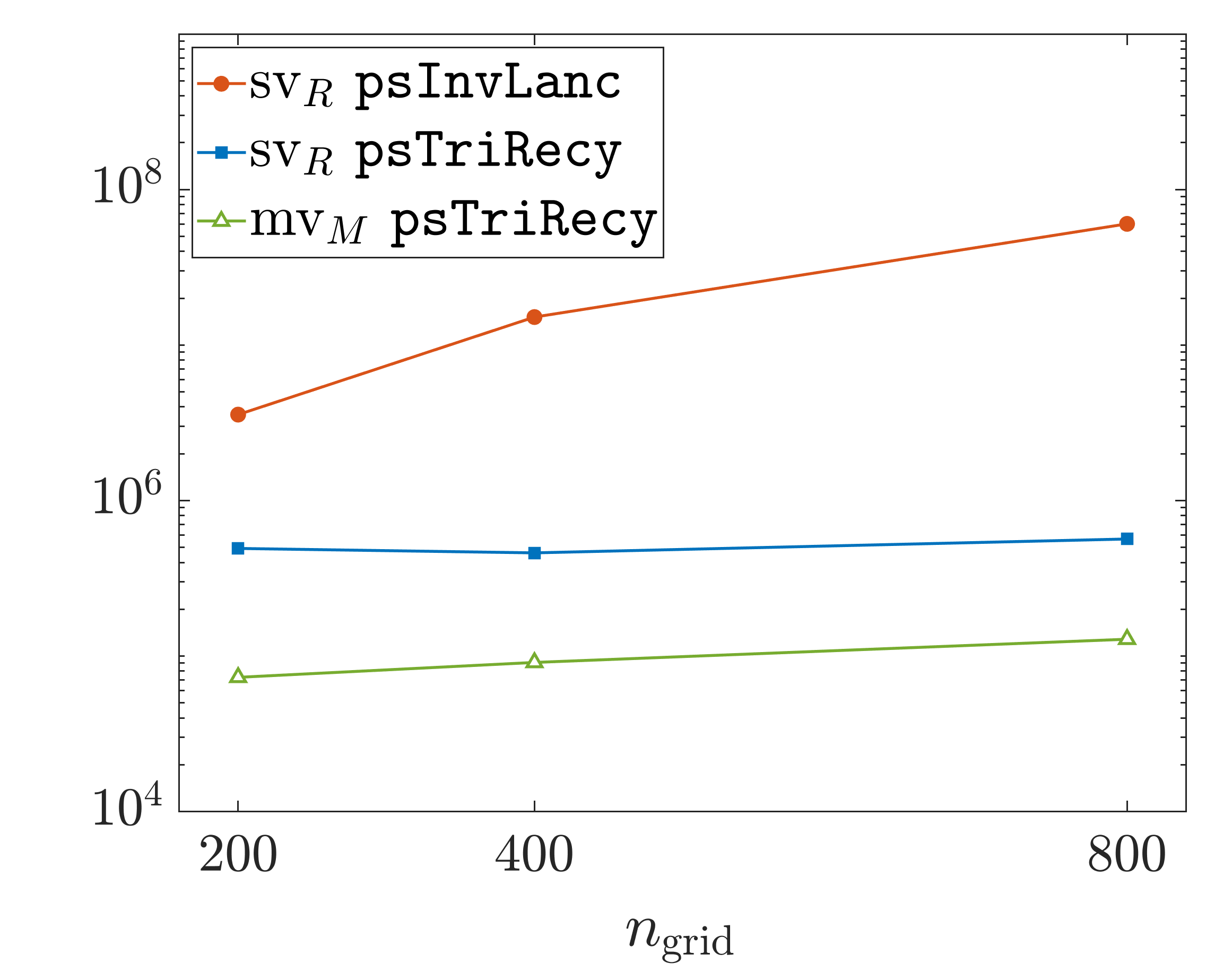}\label{fig:basor-ops}}
\hfill
\subfloat[\centering execution time]{\includegraphics[width=0.32\linewidth]{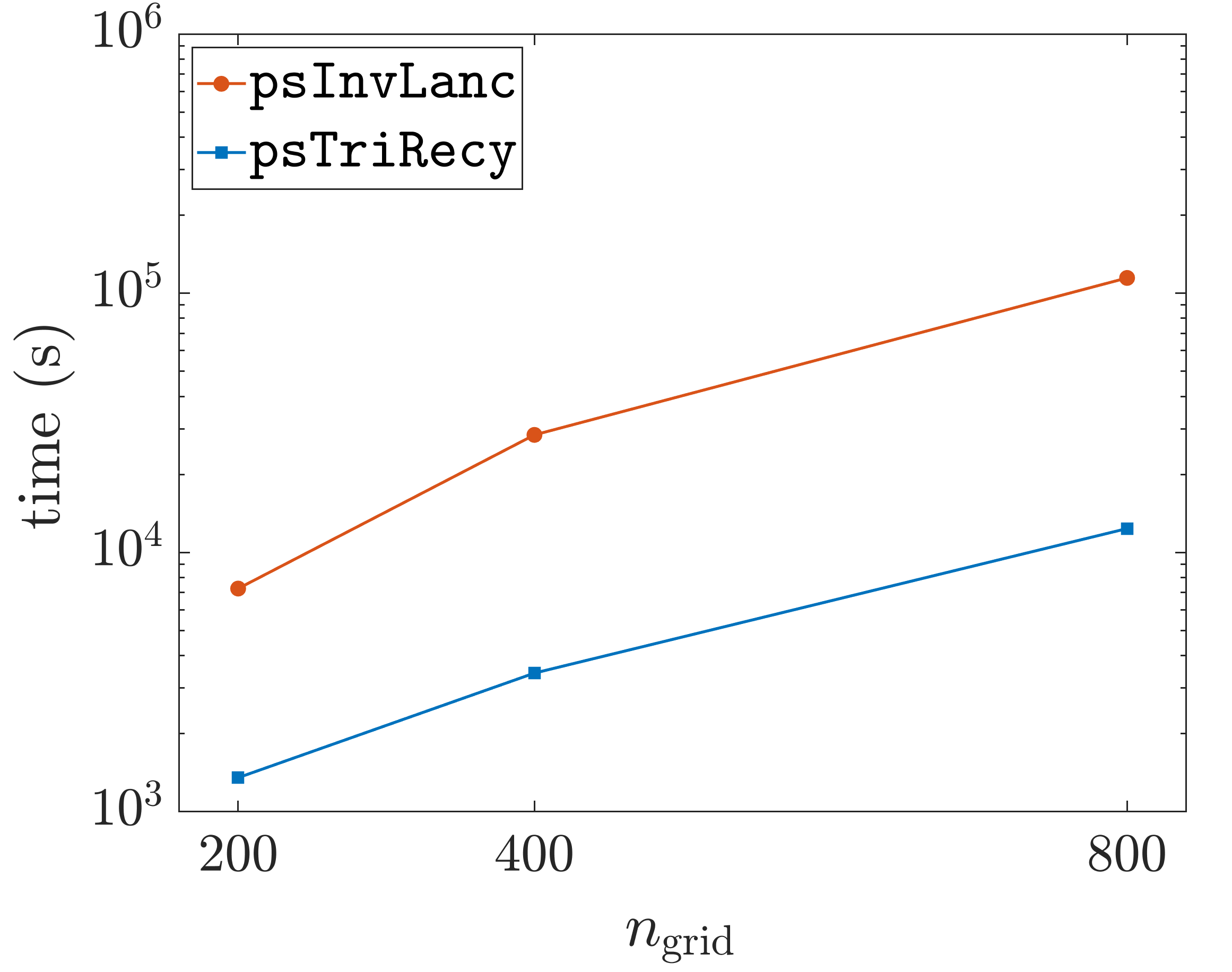}\label{fig:basor-elapsed}}
\hfill
\subfloat[\centering execution time breakdown]{\includegraphics[width=0.32\linewidth]{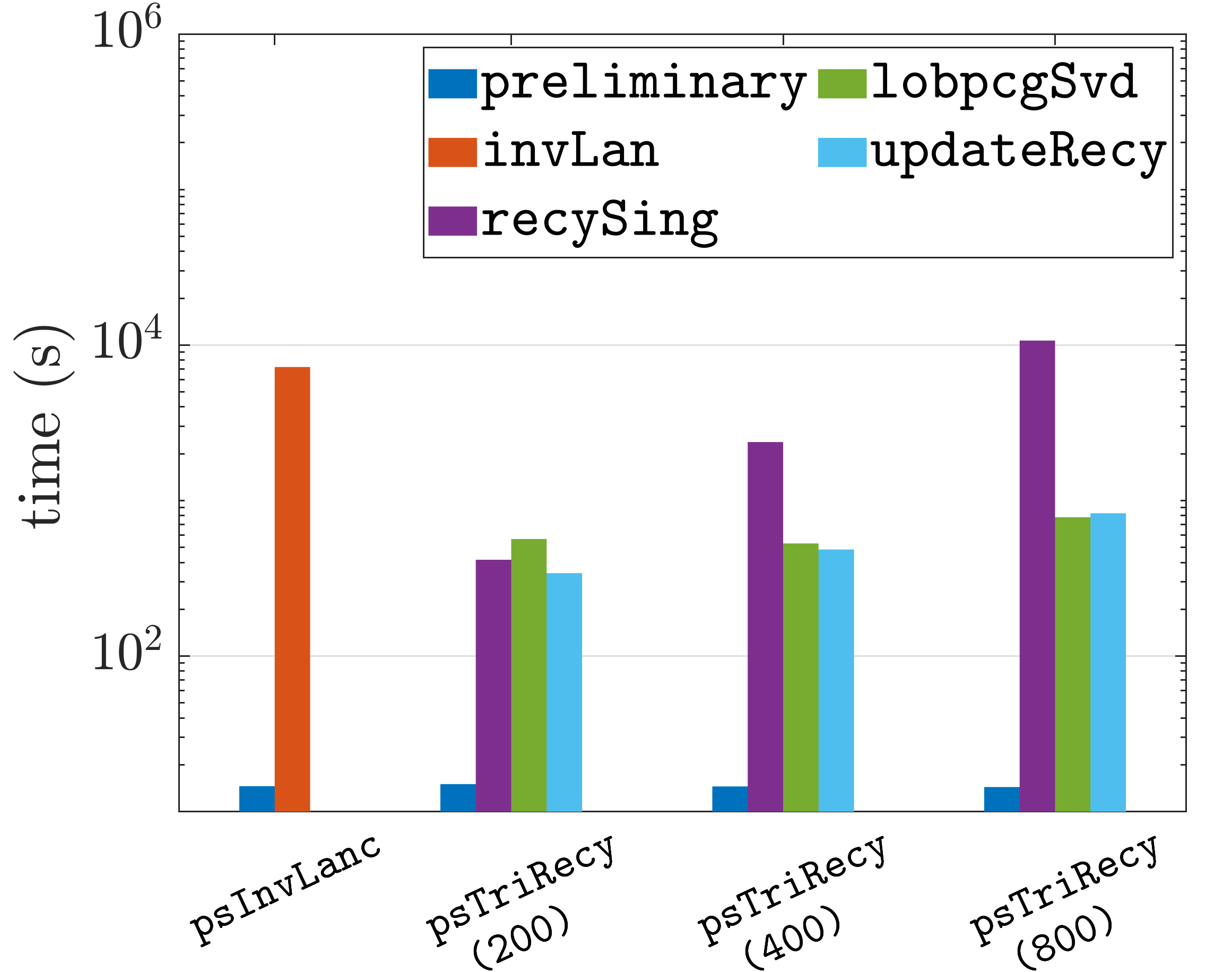}\label{fig:basor-time}}
\caption{Results for basor.}
\label{fig:basor-results}
\end{figure}
The basor matrix from \textsc{EigTool} \cite{wri3} is a Toeplitz example with a piecewise continuous symbol, and its pseudospectral contours are shown in \cref{fig:basor-contour}.

This example has a more localized refinement pattern than the previous one. Refinement of the singular subspace is required at only $4{,}056$ of the $40{,}000$ grid points, and these points are concentrated near the spectrum, where the local singular value problem is most difficult; see \cref{fig:basor-pts}. The recycling statistics in \cref{fig:basor-stats} show a stronger variation in the number of recycled blocks, highlighting the need for adaptive basis updates. In fact, in this example $\tilde r=r$ for most grid points, which makes the fast Rayleigh--Ritz-SVD particularly efficient. The few exceptional points with $\tilde r>r$ show why the dimension of the effective subspace should be chosen adaptively rather than fixed.

The speedup of ${\tt psTriRecy}$ over ${\tt psInvLanc}$ increases from $5.37$ to $8.30$ and $9.27$ for $n_{\mathrm{grid}}=200$, $400$, and $800$, respectively; see \cref{fig:basor-elapsed}. As shown by \cref{fig:basor-time}, the main costly part shifts from repeated triangular solves to ${\tt recySing}$, just as in the previous example.

\subsection{af23560}
\begin{figure}[t!]
\centering
\subfloat[\centering pseudospectral contour]{\includegraphics[width=0.32\linewidth]{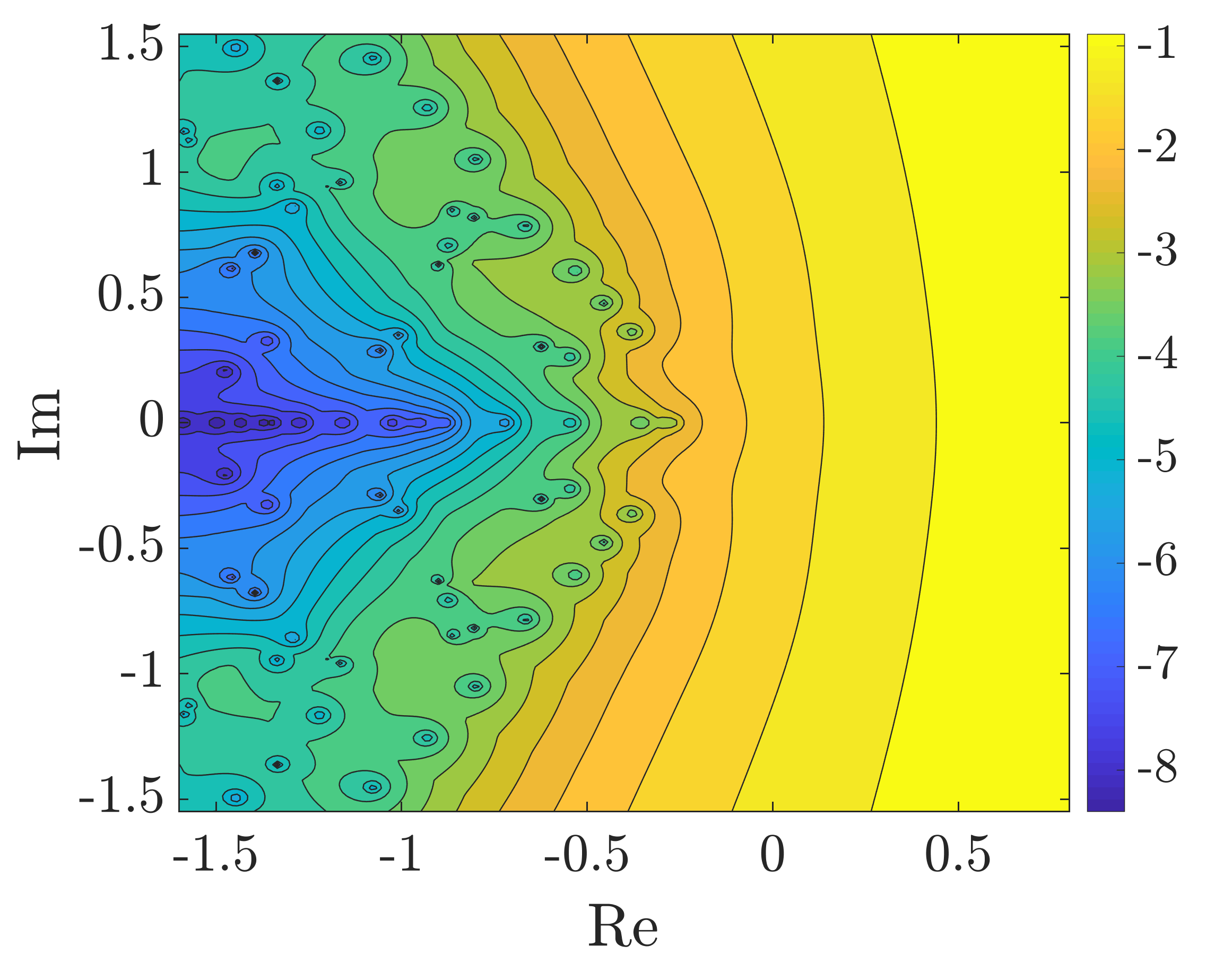}\label{fig:af23560-contour}}
\hfill
\subfloat[\centering recycling points]{\includegraphics[width=0.32\linewidth]{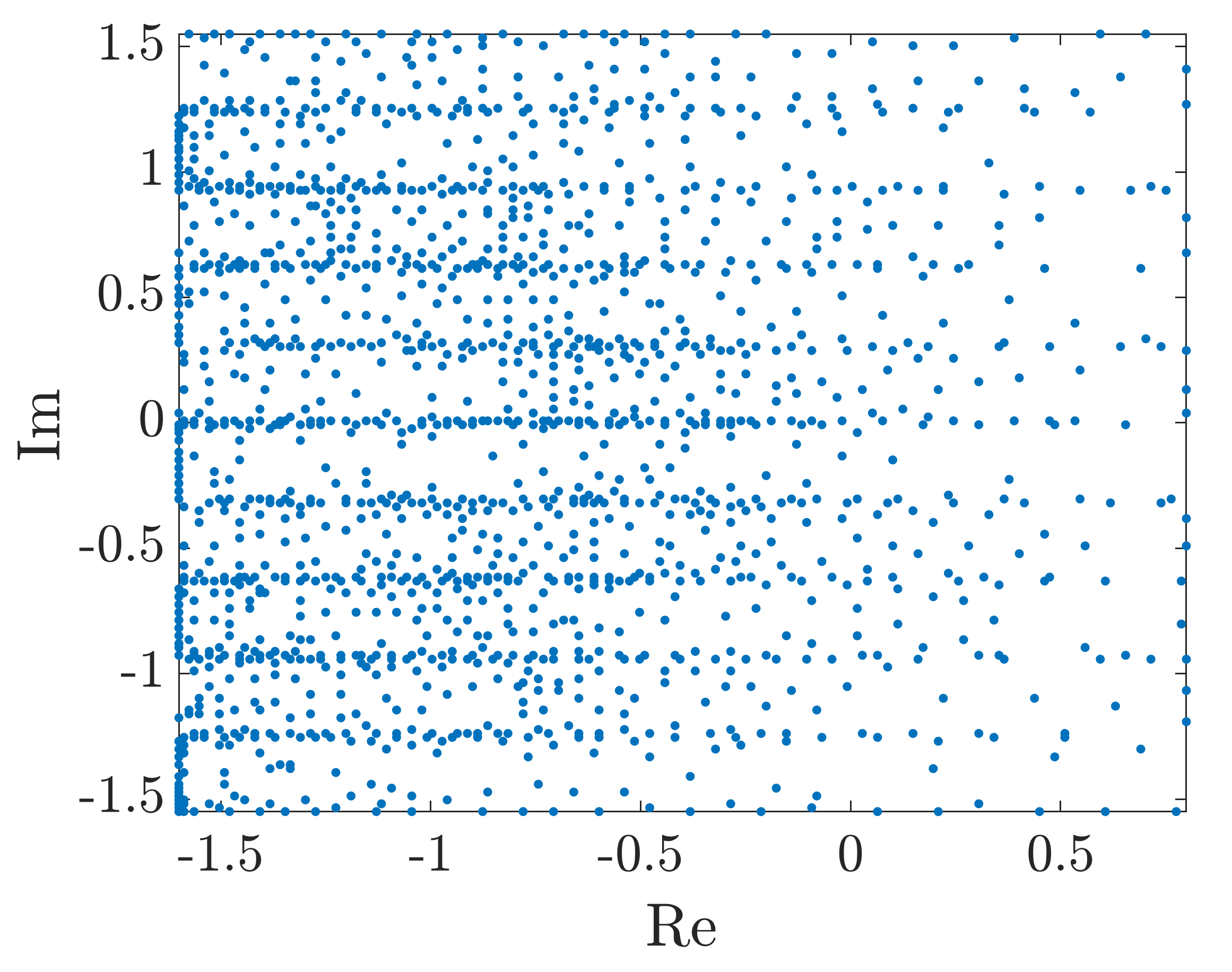}\label{fig:af23560-pts}}
\hfill
\subfloat[\centering {\tt psTriRecy} statistics]{\includegraphics[width=0.32\linewidth]{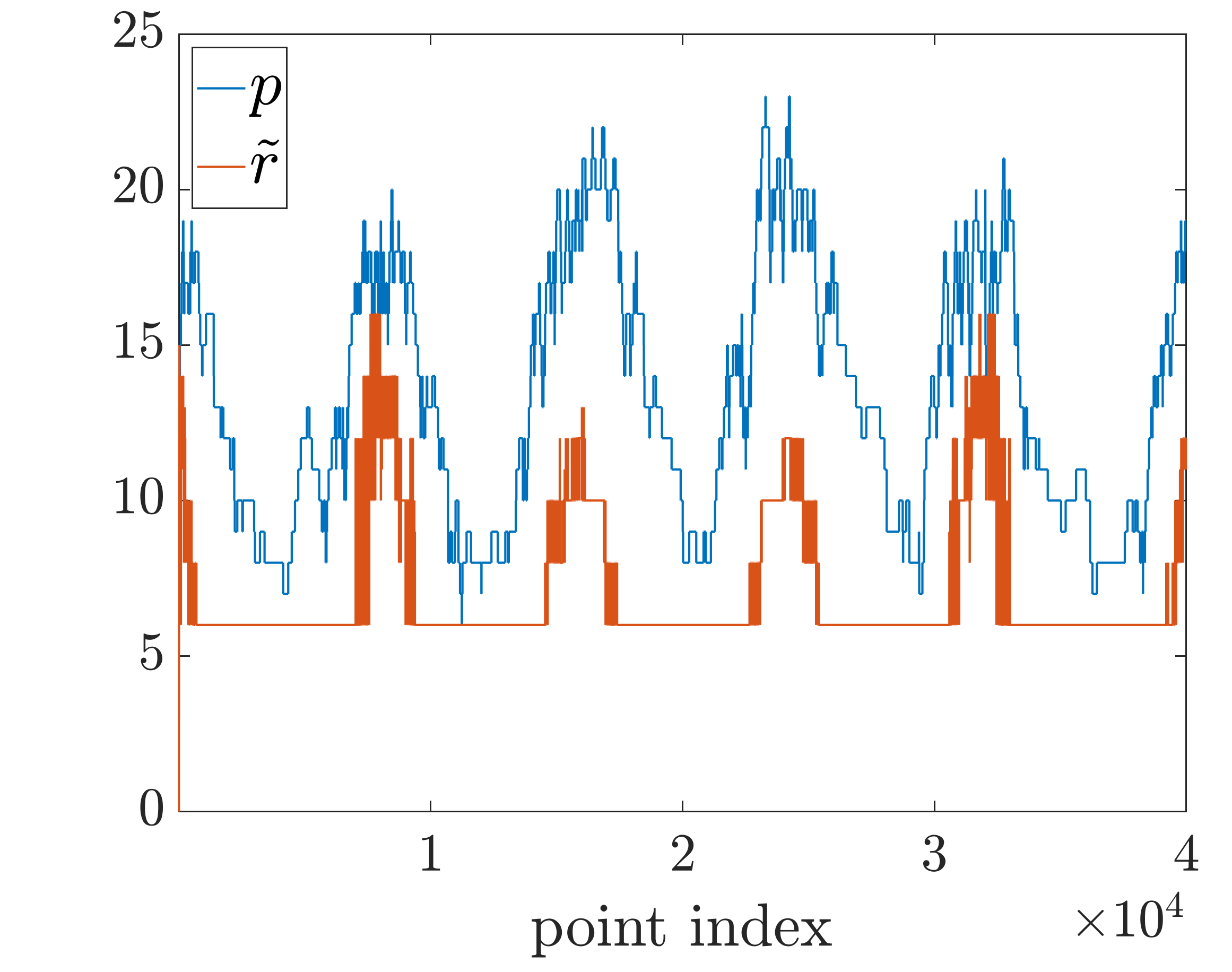}\label{fig:af23560-tri-stats}}
\par\medskip
\subfloat[\centering recycling points]{\includegraphics[width=0.32\linewidth]{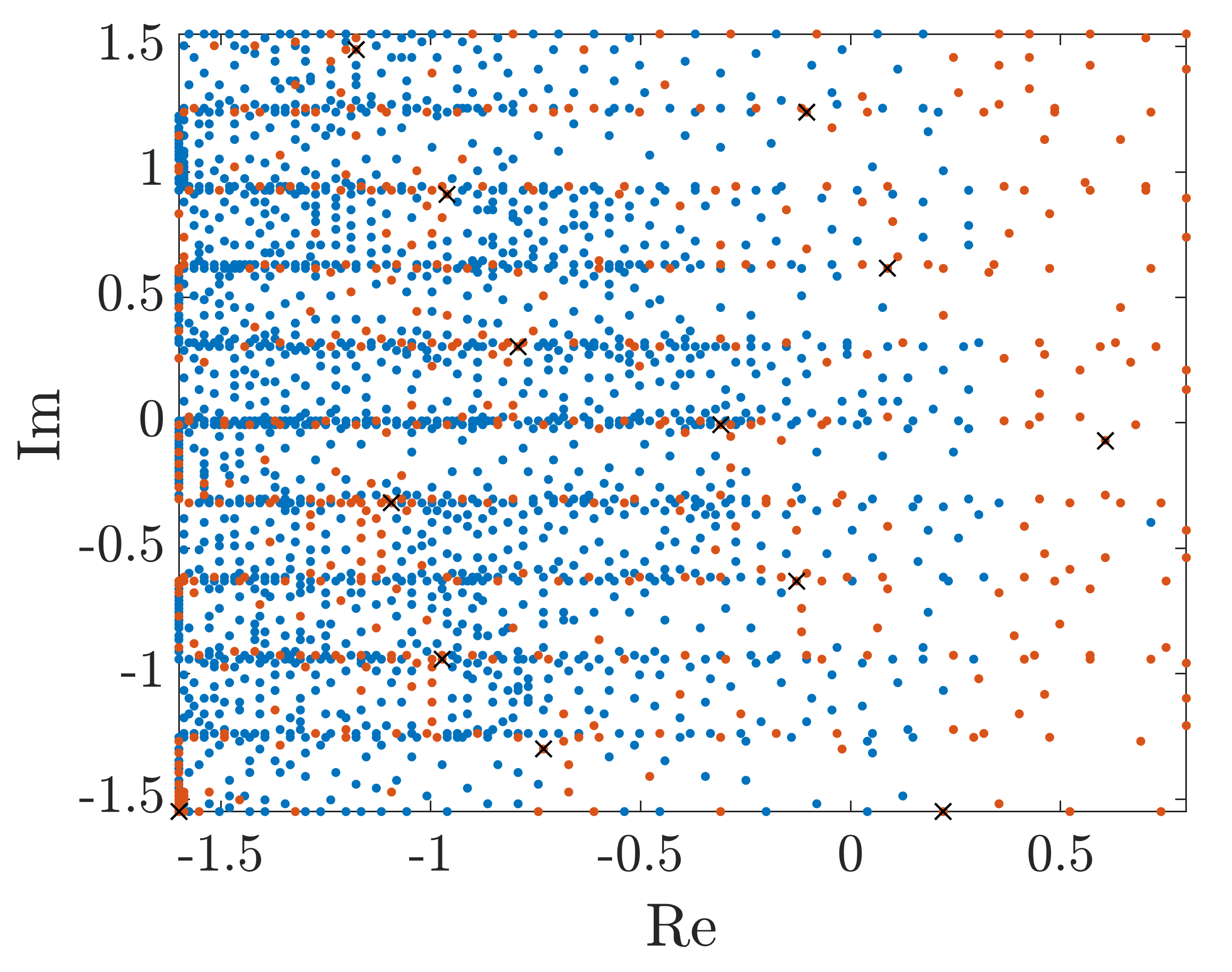}\label{fig:af23560-pred-pts}}
\hfill
\subfloat[\centering {\tt psPrecRecy} statistics]{\includegraphics[width=0.32\linewidth]{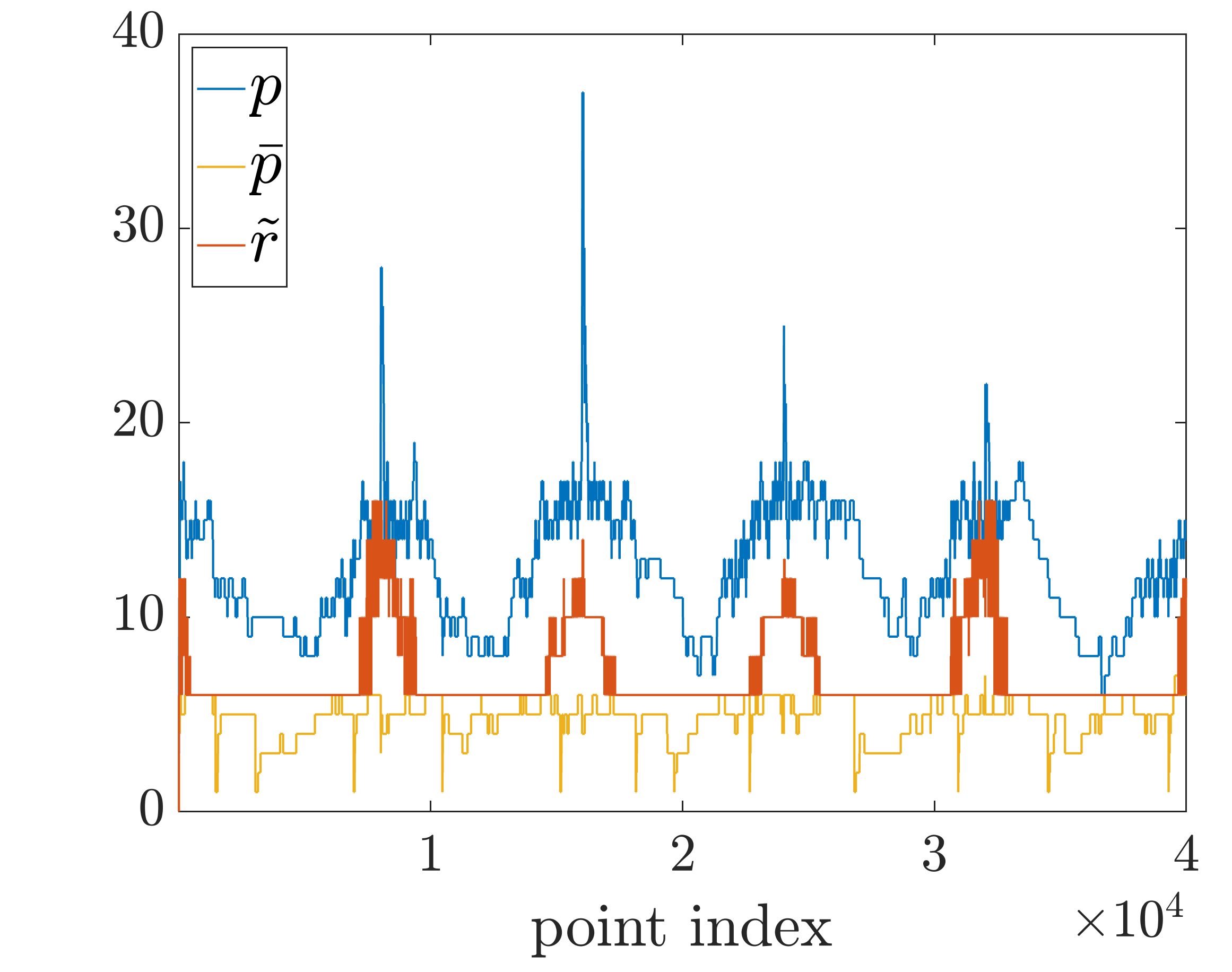}\label{fig:af23560-pred-stats}}
\hfill
\subfloat[\centering execution time breakdown]{\includegraphics[width=0.32\linewidth]{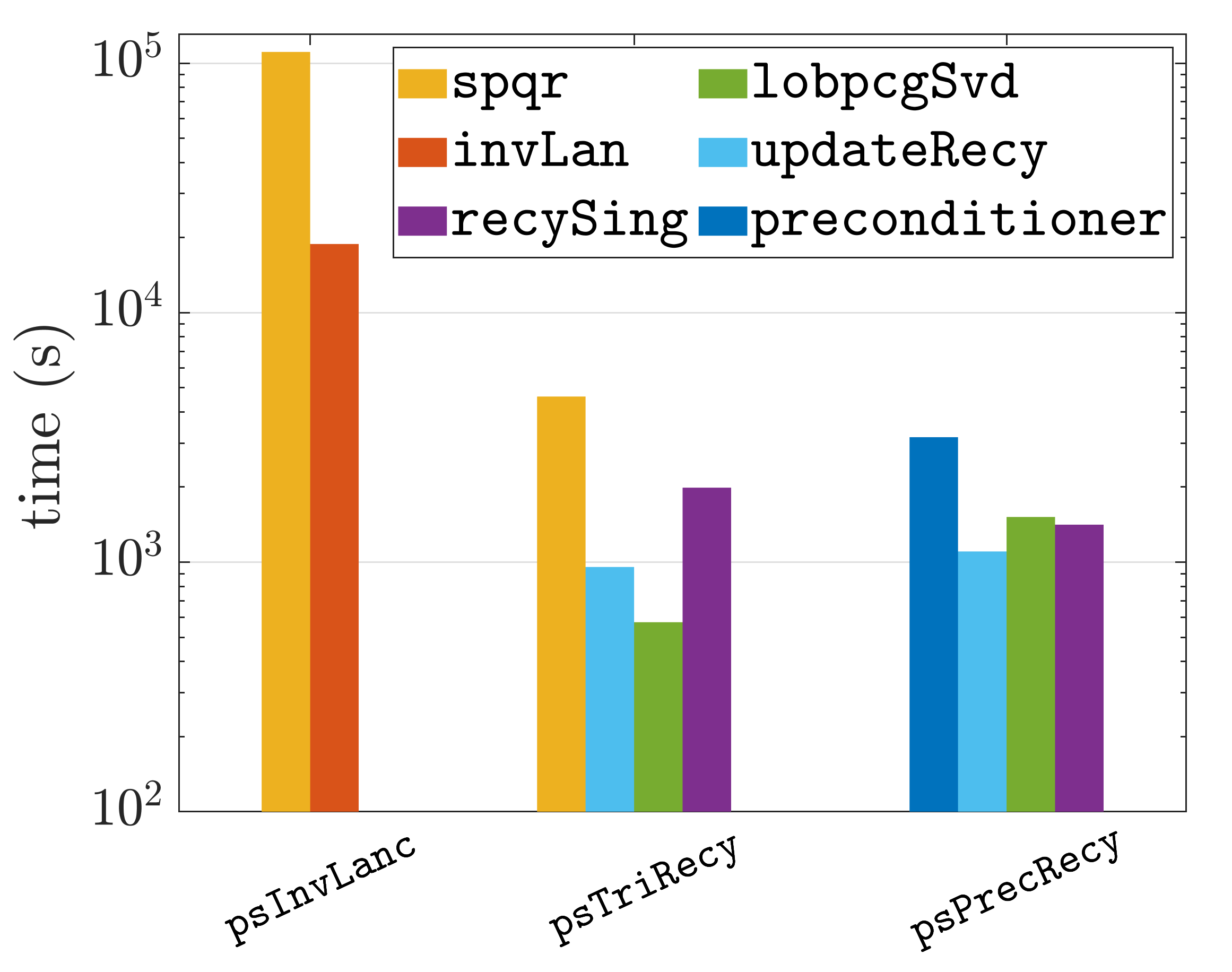}\label{fig:af23560-time}}
\caption{Results for af23560.}
\label{fig:af23560-results}
\end{figure}
The af23560 matrix, taken as an example in \cite{wri1} and available from the Matrix Market collection \cite{boi}, provides a large-scale sparse test problem. As shown in \cref{fig:af23560-contour}, our method produces the correct pseudospectral contour, whereas the contours obtained by projection onto Krylov subspaces are correct only in their rightmost part \cite{wri1}.

At only $1{,}530$ of the $40{,}000$ grid points does ${\tt psTriRecy}$ require refinement of the singular subspaces via sparse QR; see \cref{fig:af23560-pts}. Consequently, ${\tt psTriRecy}$ spends only $4{,}613$ seconds on QR-related computations, compared to $110{,}985$ for ${\tt psInvLanc}$; see \cref{fig:af23560-time}. The statistics characterizing the adaptivity in \cref{fig:af23560-tri-stats} show that $p$ and $\tilde{r}$ vary significantly across the grid due to the local difficulty.

When this problem is solved using ${\tt psPrecRecy}$, the cost of the expensive sparse QR for triangularization in ${\tt psTriRecy}$ is saved by the even lower cost of applying the preconditioner and recycling the projection subspace; see \cref{fig:af23560-time}. More precisely, only $3{,}169$ seconds are spent on preconditioner-related computations. Among the $2{,}088$ singular-subspace recycling points determined by ${\tt psPrecRecy}$, only $499$ and $13$ of them correspond to projection-subspace recycling and base-preconditioner reconstruction, respectively; see \cref{fig:af23560-pred-pts}. Since there are slightly more recycling points than in ${\tt psTriRecy}$, as shown in \cref{fig:af23560-pred-stats}, ${\tt psPrecRecy}$ is not significantly faster than ${\tt psTriRecy}$. The execution times are $8{,}150$ and $7{,}333$ seconds for ${\tt psTriRecy}$ and ${\tt psPrecRecy}$, respectively, corresponding to speedups of $15.94$ and $17.72$ over ${\tt psInvLanc}$.

\subsection{skewlap3d}
\begin{figure}[t!]
\centering
\subfloat[\centering pseudospectral contour]{\includegraphics[width=0.32\linewidth]{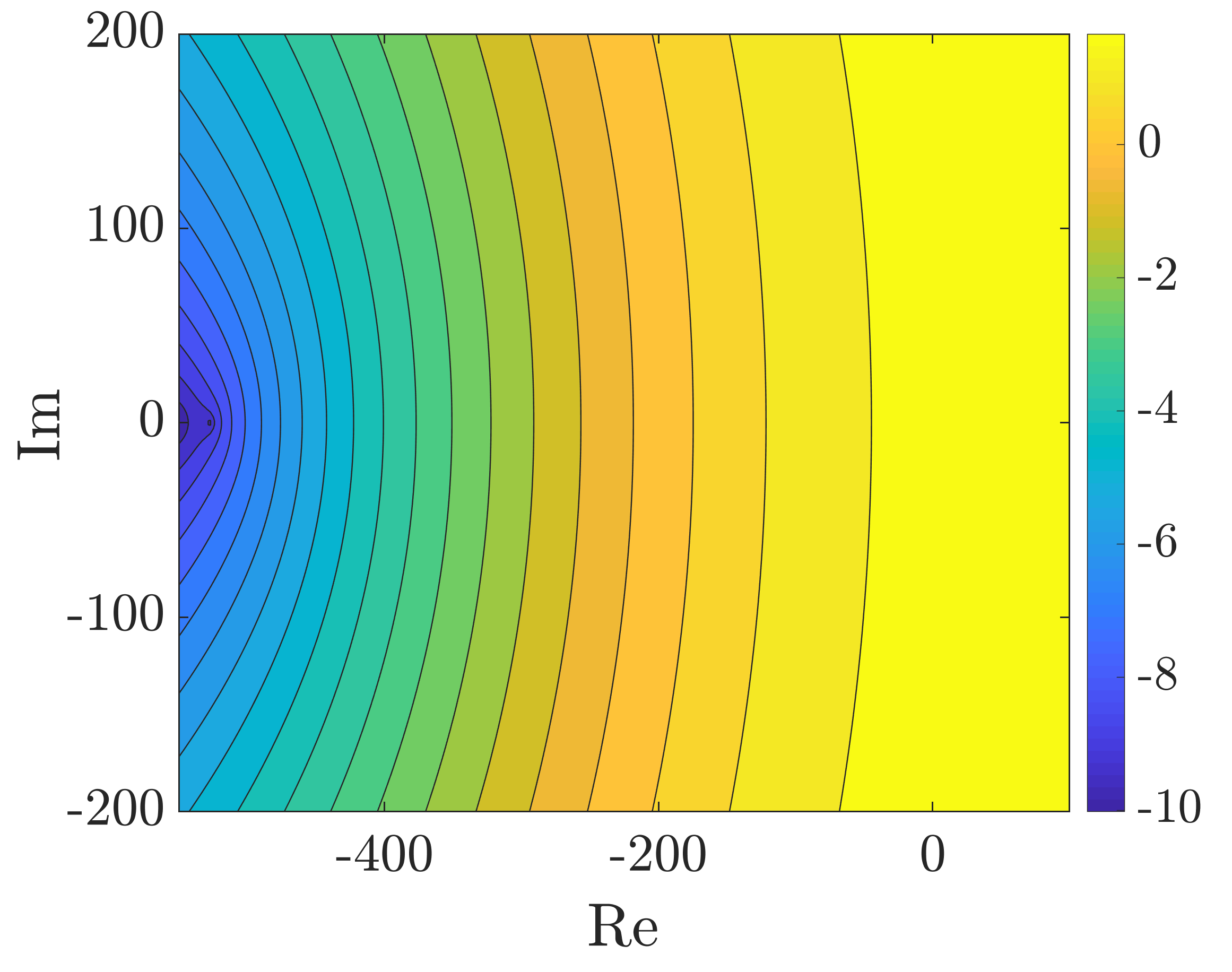}\label{fig:skewlap3d-contour}}
\hfill
\subfloat[\centering recycling points]{\includegraphics[width=0.32\linewidth]{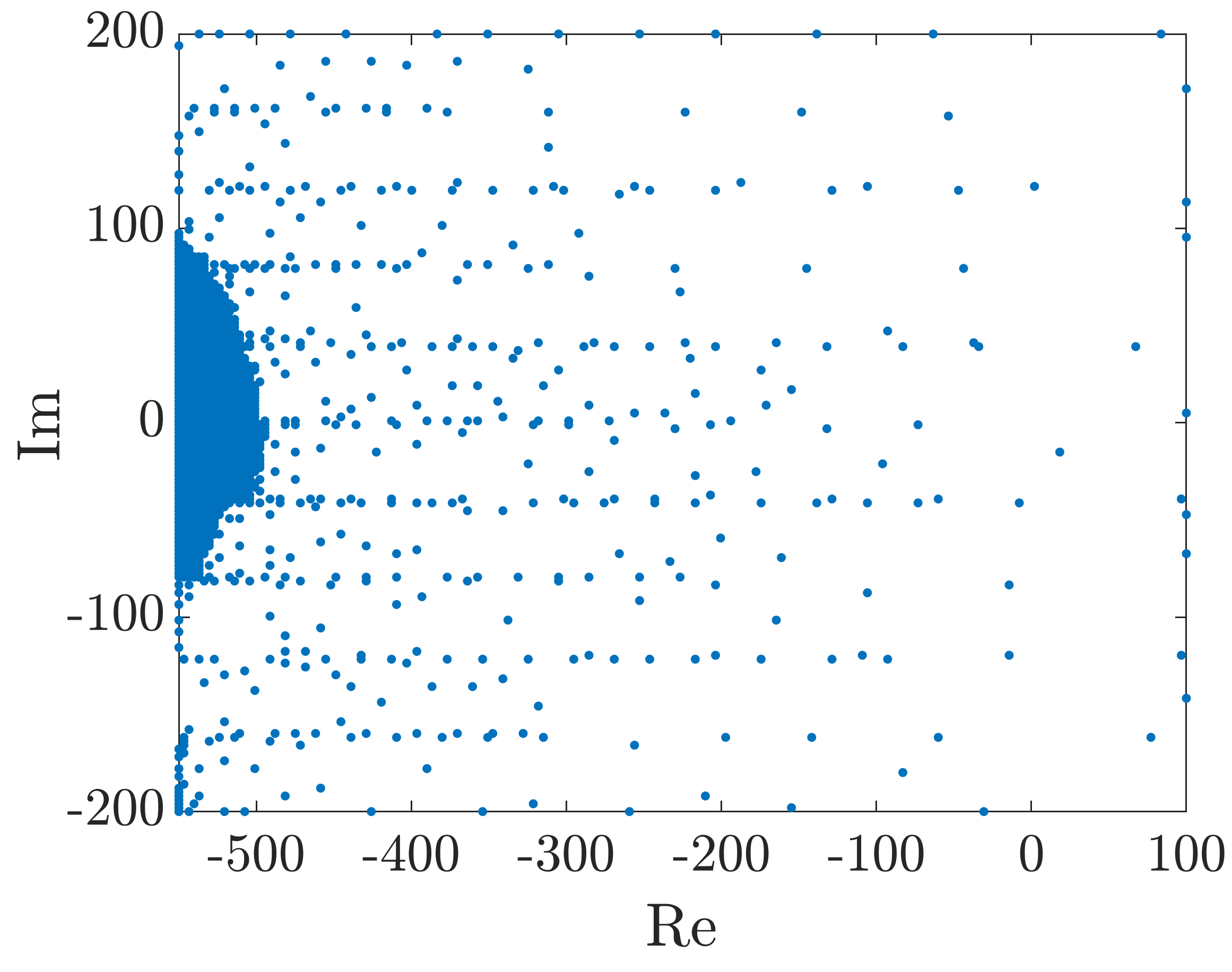}\label{fig:skewlap3d-pts}}
\hfill
\subfloat[\centering {\tt psTriRecy} statistics]{\includegraphics[width=0.32\linewidth]{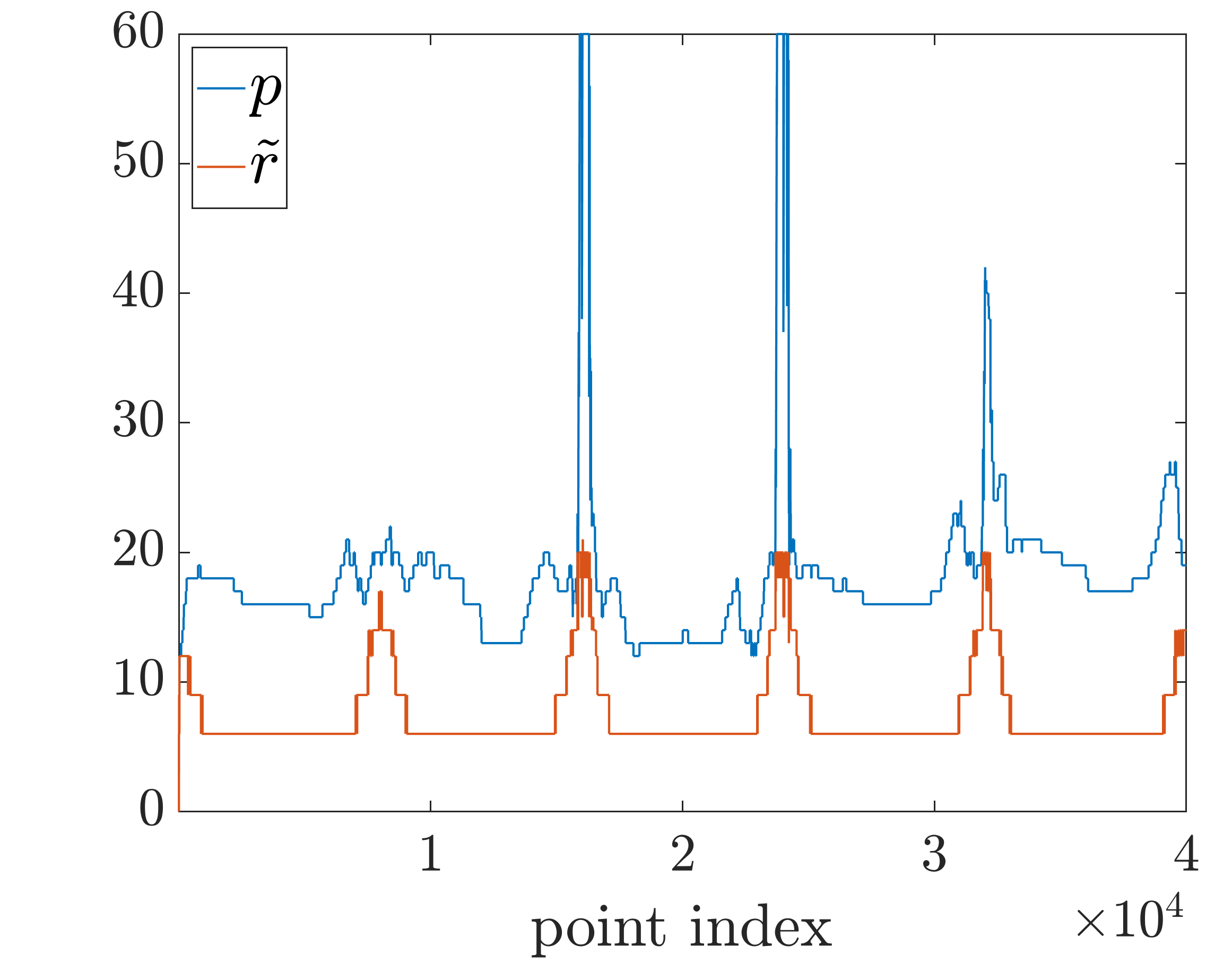}\label{fig:skewlap3d-tri-stats}}
\par\medskip
\subfloat[\centering recycling points]{\includegraphics[width=0.32\linewidth]{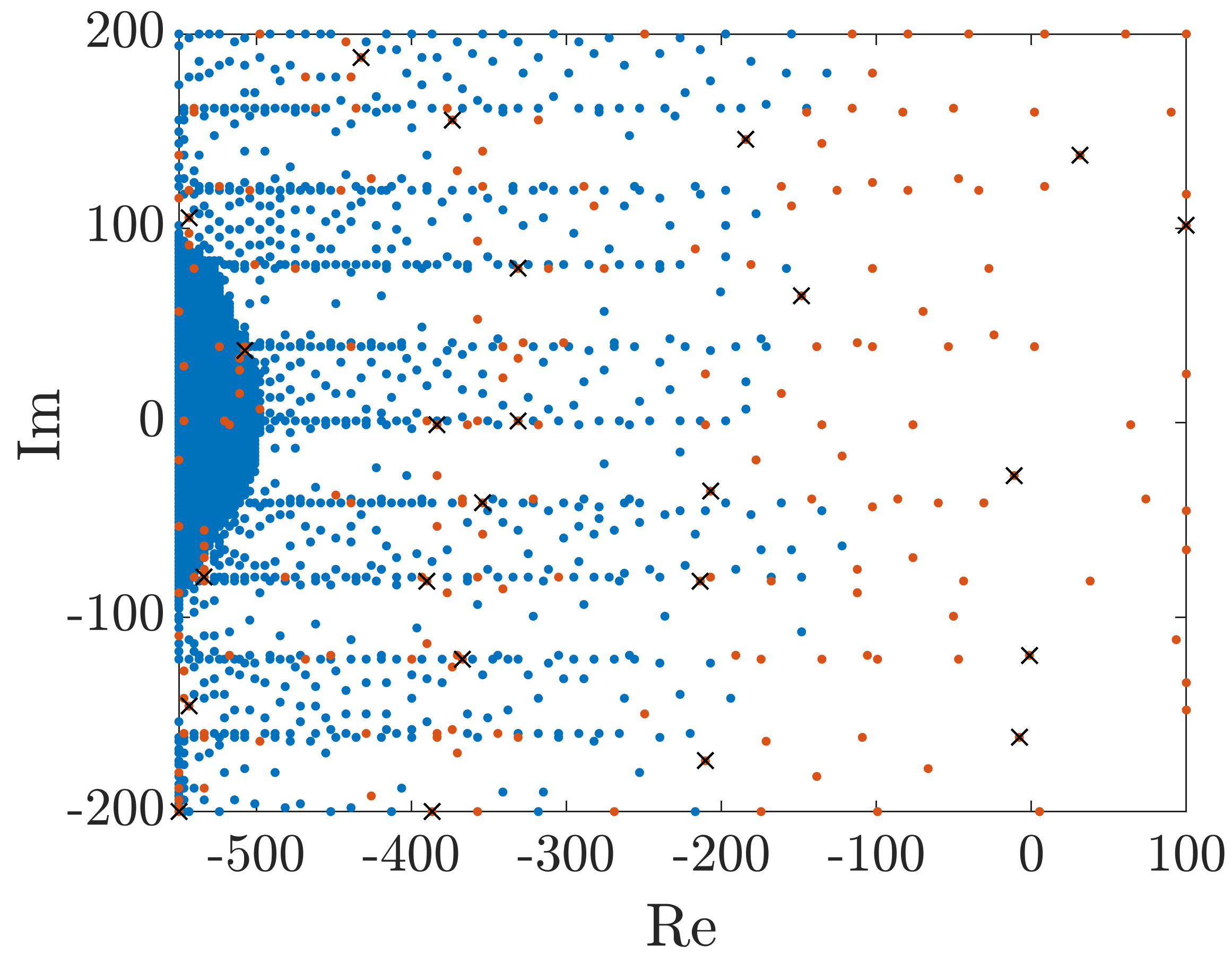}\label{fig:skewlap3d-pred-pts}}
\hfill
\subfloat[\centering {\tt psPrecRecy} statistics]{\includegraphics[width=0.32\linewidth]{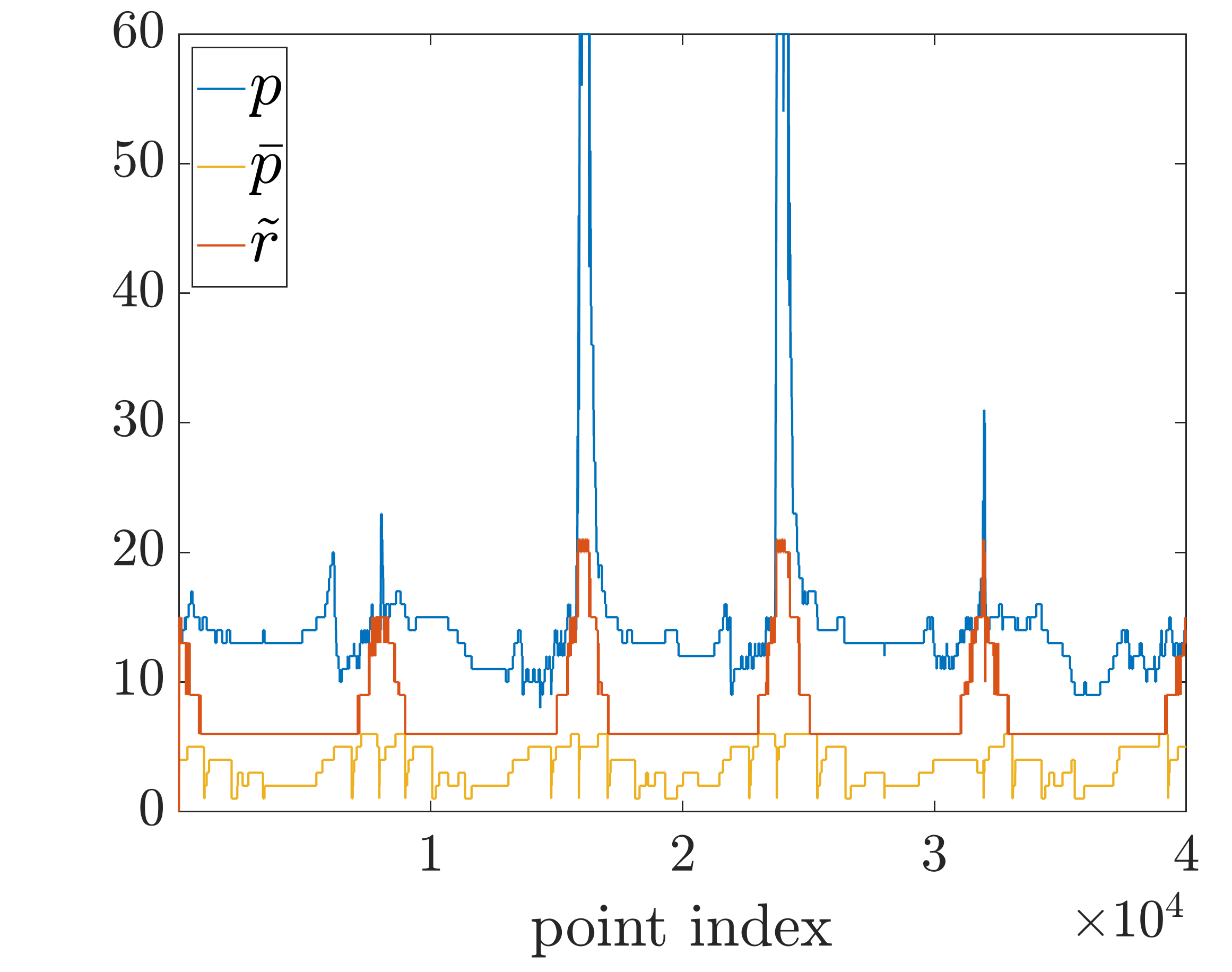}\label{fig:skewlap3d-pred-stats}}
\hfill
\subfloat[\centering execution time breakdown]{\includegraphics[width=0.32\linewidth]{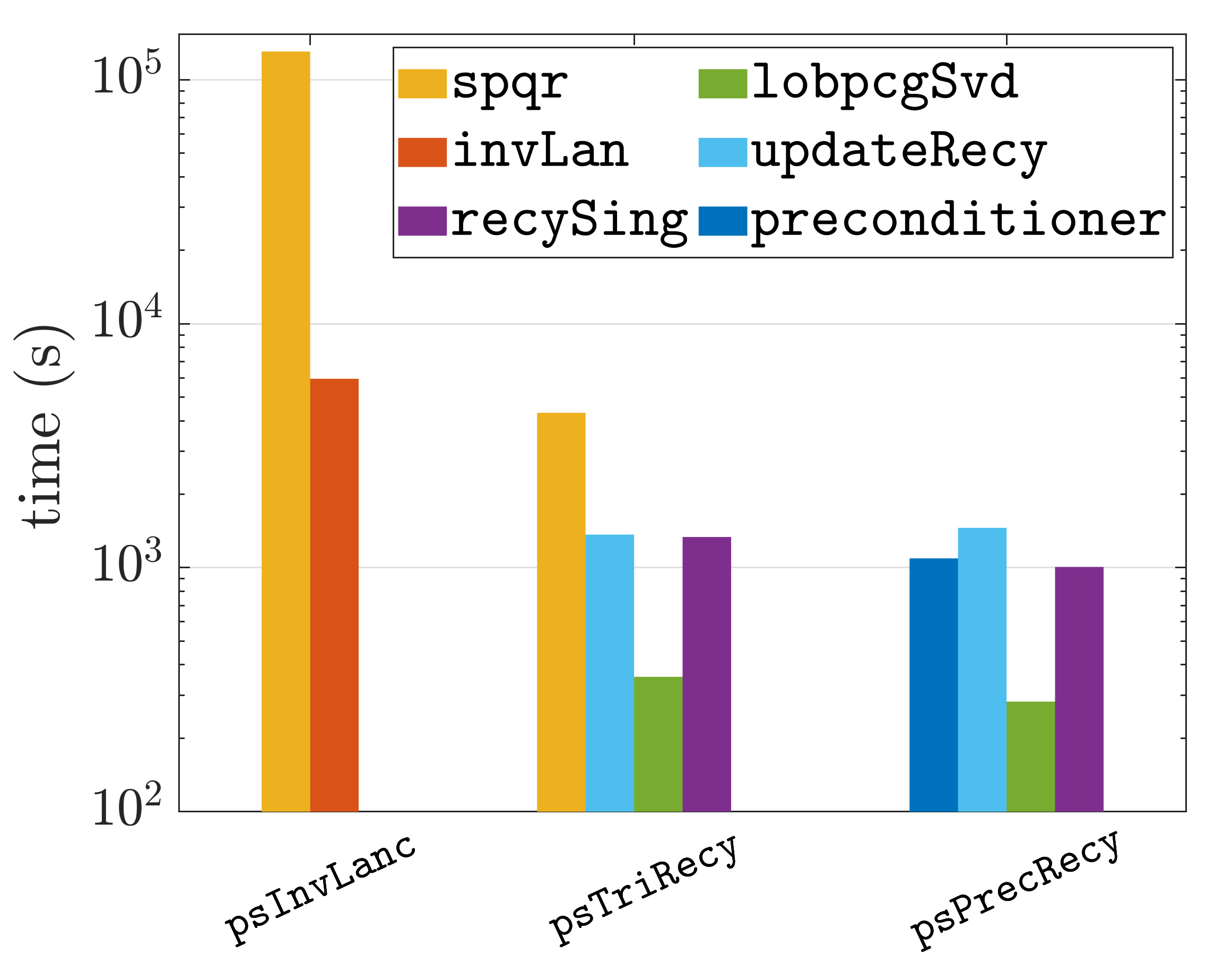}\label{fig:skewlap3d-time}}
\caption{Results for skewlap3d.}
\label{fig:skewlap3d-results}
\end{figure}
Our fourth example, the skewlap3d matrix, is also borrowed from \textsc{EigTool} \cite{wri3}. When the discretization parameter is set to $25$, this matrix serves as the most ill-conditioned sparse example in our test set.

Although the maximum condition number is $8.35\times 10^{13}$ across the grid, \texttt{psTriRecy} produces smooth and well-resolved pseudospectral contours, as shown in \cref{fig:skewlap3d-contour}. As shown in \cref{fig:skewlap3d-pts}, refinement is required at only $1{,}442$ of the $40{,}000$ grid points and occurs mainly in the left-central part of the region of interest, where the local singular value problem is most ill-conditioned. Recycling reduces the time spent on sparse QR from $130{,}590$ seconds for ${\tt psInvLanc}$ to $4{,}313$ seconds for ${\tt psTriRecy}$, as shown in \cref{fig:skewlap3d-time}. The statistics in \cref{fig:skewlap3d-tri-stats} show that the number of recycled blocks reaches the prescribed cap $p_{\max}=60$ near the difficult region. The value of $\tilde r$ is close to $r$ throughout.

When solved using \texttt{psPrecRecy}, this example benefits substantially from preconditioning---only $1{,}092$ seconds are spent on preconditioner-related computations throughout the entire computation; see \cref{fig:skewlap3d-time}. As shown in \cref{fig:skewlap3d-pred-pts}, the grid points at which projection-subspace recycling is performed follow a distribution similar to that of the singular-subspace recycling points. There are $2{,}027$ singular-subspace recycling points, $205$ projection-subspace recycling points, and $24$ base-preconditioner reconstructions. In the region where the recycling points are concentrated, the method uses only one base preconditioner. The statistics in \cref{fig:skewlap3d-pred-stats} show that $p$ and $\tilde r$ follow patterns similar to those in \cref{fig:skewlap3d-tri-stats}, while the dimension $\bar p$ of the recycled projection subspace remains small throughout the computation. The total execution time is reduced from $136{,}575$ seconds for ${\tt psInvLanc}$ to $7{,}390$ seconds for ${\tt psTriRecy}$ and further to $3{,}968$ seconds for ${\tt psPrecRecy}$, corresponding to speedups of $18.48$ and $34.42$, respectively.

\subsection{sparserandom}
Finally, we consider the sparse random matrix generated by \texttt{sparserandom\_demo.m} from \cite{wri3} with dimension $n=4{,}000$. This example represents a different challenge---although the matrix is not severely ill-conditioned, its QR factors are much denser and therefore expensive to apply.

The pseudospectral contours computed by ${\tt psTriRecy}$ are shown in \cref{fig:sprand-contour}. In contrast to the previous sparse examples, the $3{,}352$ grid points where singular-subspace recycling is performed by ${\tt psTriRecy}$ are distributed rather uniformly over the computational region; see \cref{fig:sprand-pts}. Nevertheless, recycling still reduces the sparse QR time from $89{,}528$ seconds for ${\tt psInvLanc}$ to $6{,}990$ seconds for ${\tt psTriRecy}$, as shown by the golden yellow bars in \cref{fig:sprand-time}. The statistics in \cref{fig:sprand-tri-stats} show that $p$ remains modest, mostly below $20$, and that $\tilde r=r$ throughout the computation, consistent with the moderate conditioning of this problem.

This example particularly favors ${\tt psPrecRecy}$, with preconditioner-related computations taking only $500$ seconds (the second purple bar in \cref{fig:sprand-time}). Among the $4{,}654$ singular-subspace recycling points in ${\tt psPrecRecy}$, there are only $461$ projection-subspace recycling points and $11$ base-preconditioner reconstructions; see \cref{fig:sprand-pred-pts}. The total execution time drops from $92{,}846$ seconds for ${\tt psInvLanc}$ to $7{,}848$ seconds for ${\tt psTriRecy}$ and further to $1{,}606$ seconds for ${\tt psPrecRecy}$. The speedup over ${\tt psInvLanc}$ therefore increases from $11.83$ for ${\tt psTriRecy}$ to $57.83$ for ${\tt psPrecRecy}$.

\begin{figure}[t!]
\centering
\subfloat[\centering pseudospectral contour]{\includegraphics[width=0.32\linewidth]{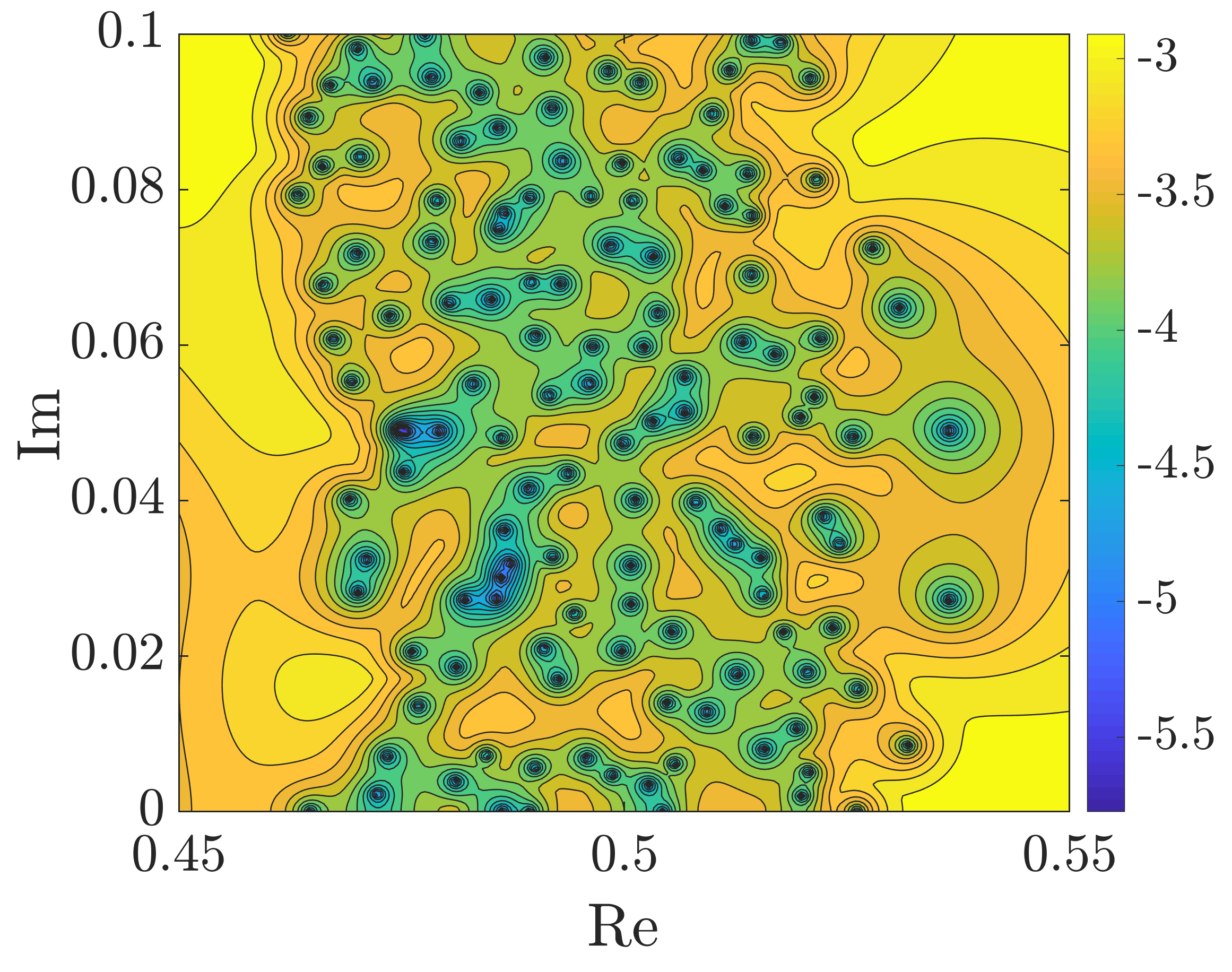}\label{fig:sprand-contour}}
\hfill
\subfloat[\centering recycling points]{\includegraphics[width=0.32\linewidth]{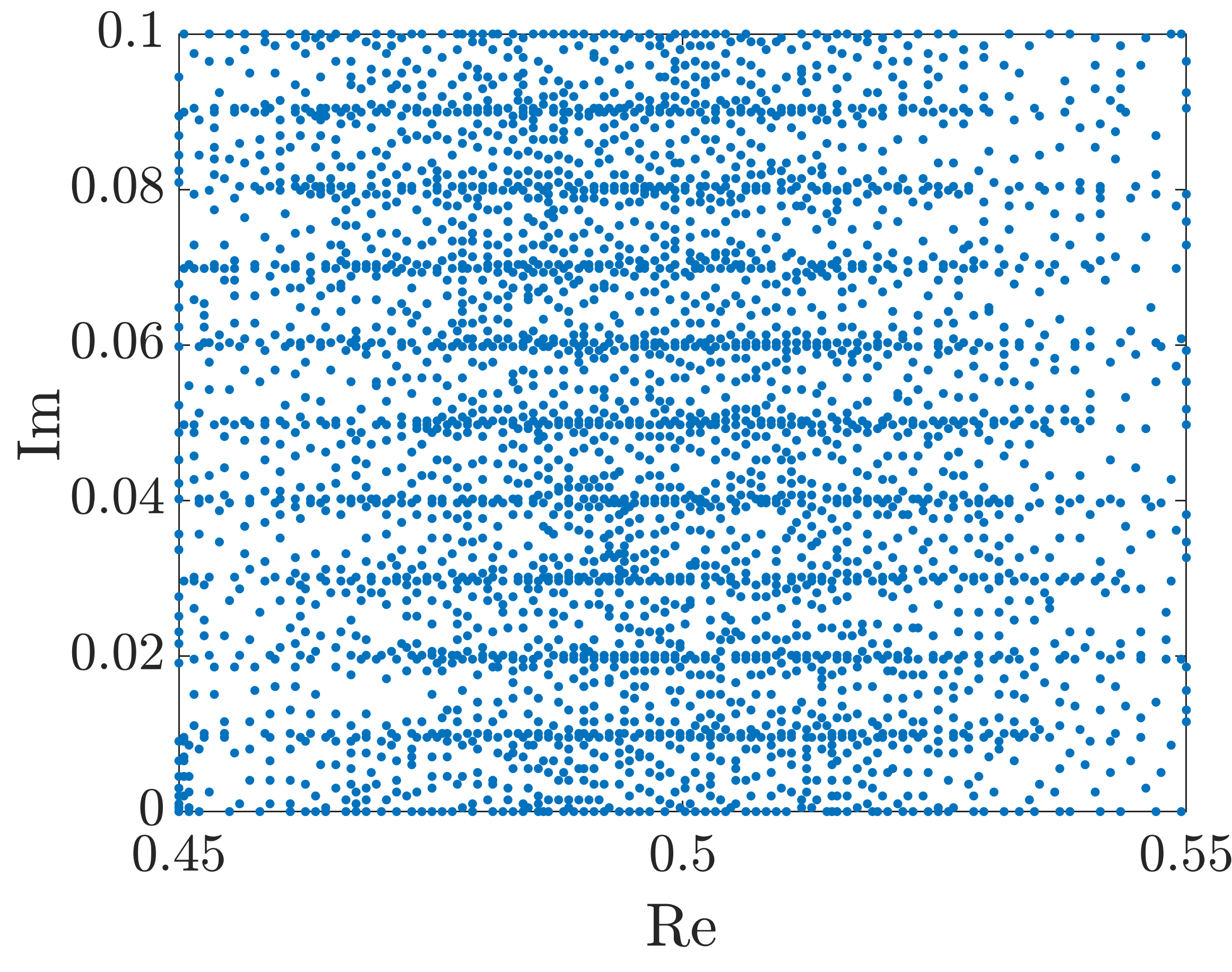}\label{fig:sprand-pts}}
\hfill
\subfloat[\centering {\tt psTriRecy} statistics]{\includegraphics[width=0.32\linewidth]{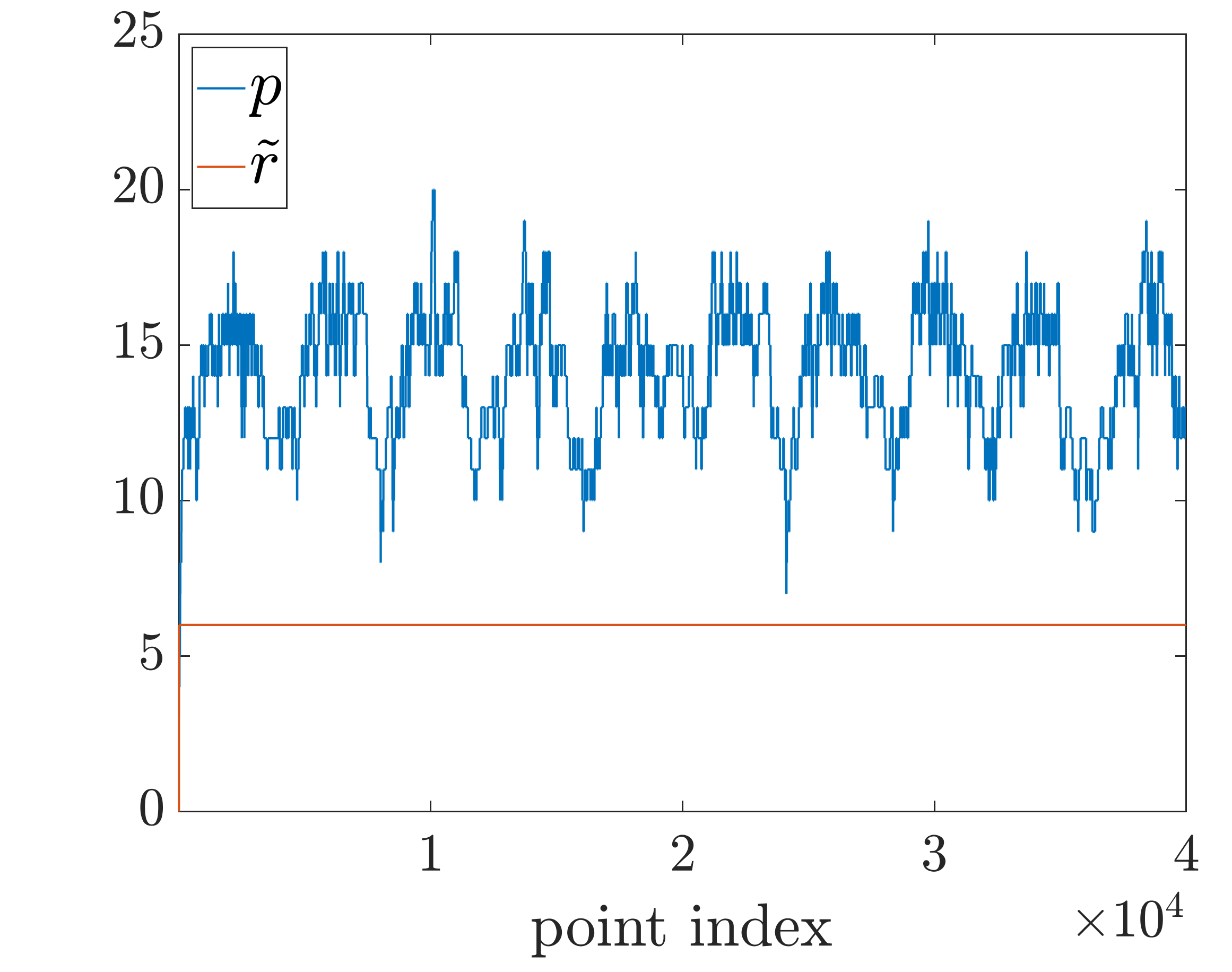}\label{fig:sprand-tri-stats}}
\par\medskip
\subfloat[\centering recycling points]{\includegraphics[width=0.32\linewidth]{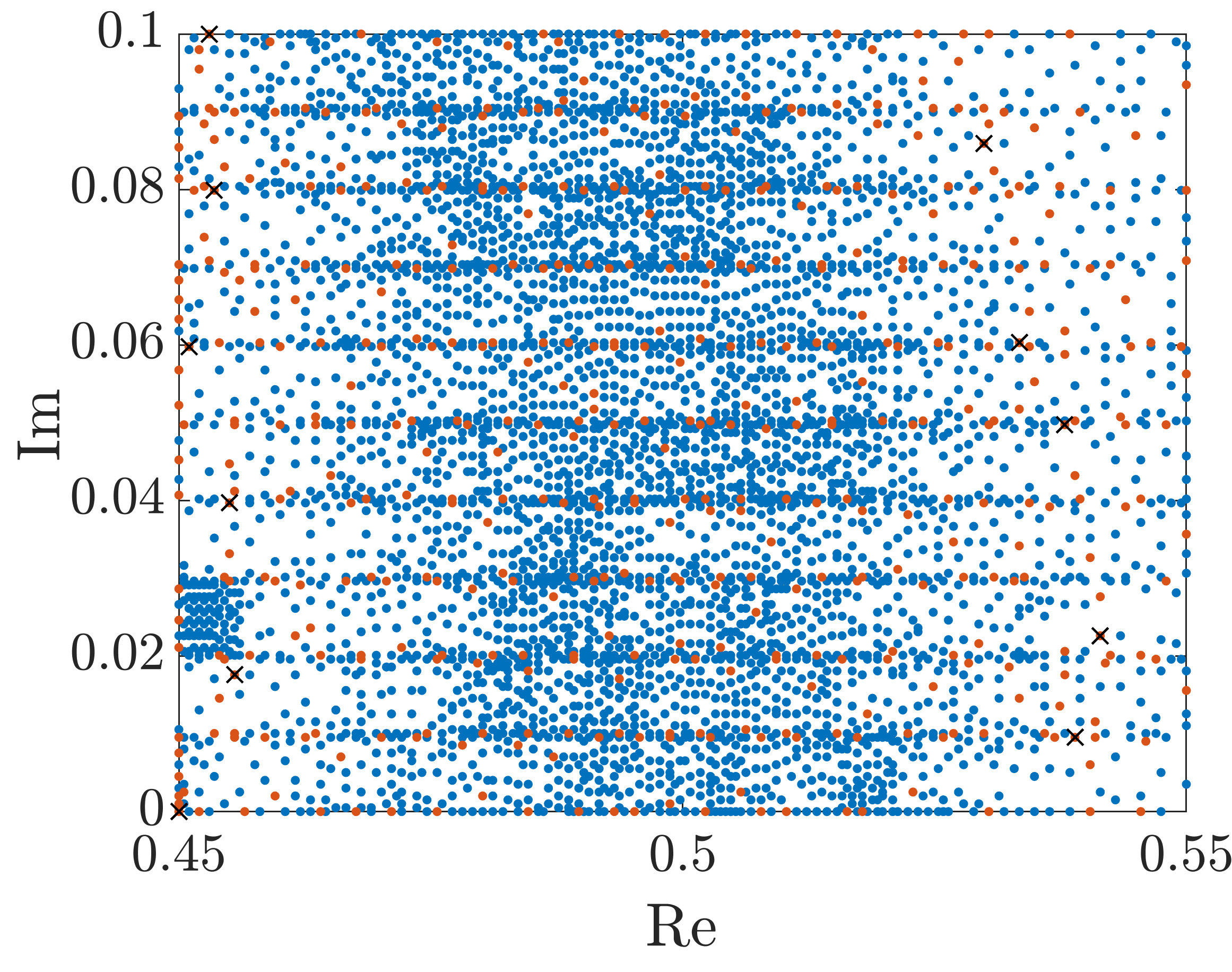}\label{fig:sprand-pred-pts}}
\hfill
\subfloat[\centering {\tt psPrecRecy} statistics]{\includegraphics[width=0.32\linewidth]{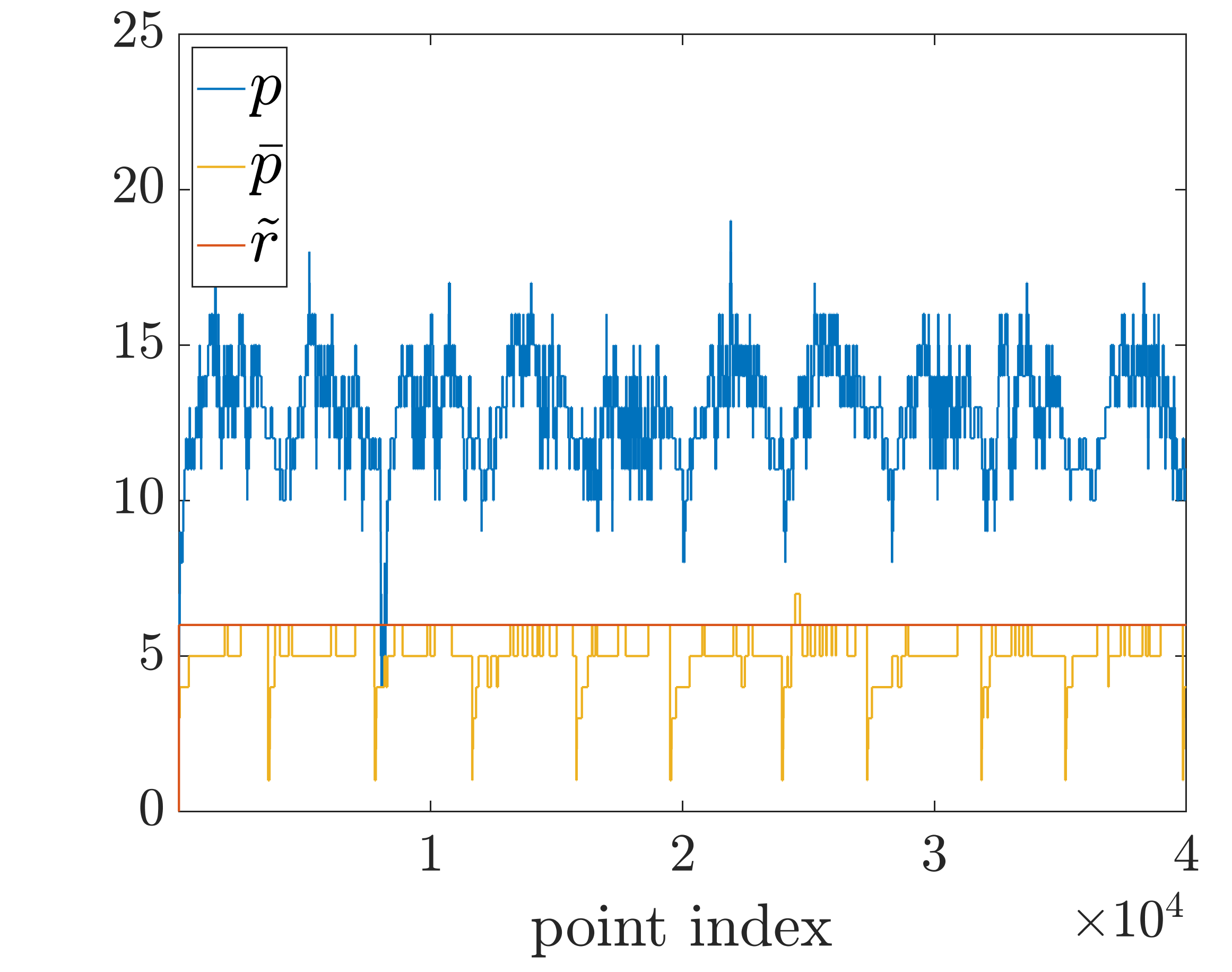}\label{fig:sprand-pred-stats}}
\hfill
\subfloat[\centering execution time breakdown]{\includegraphics[width=0.32\linewidth]{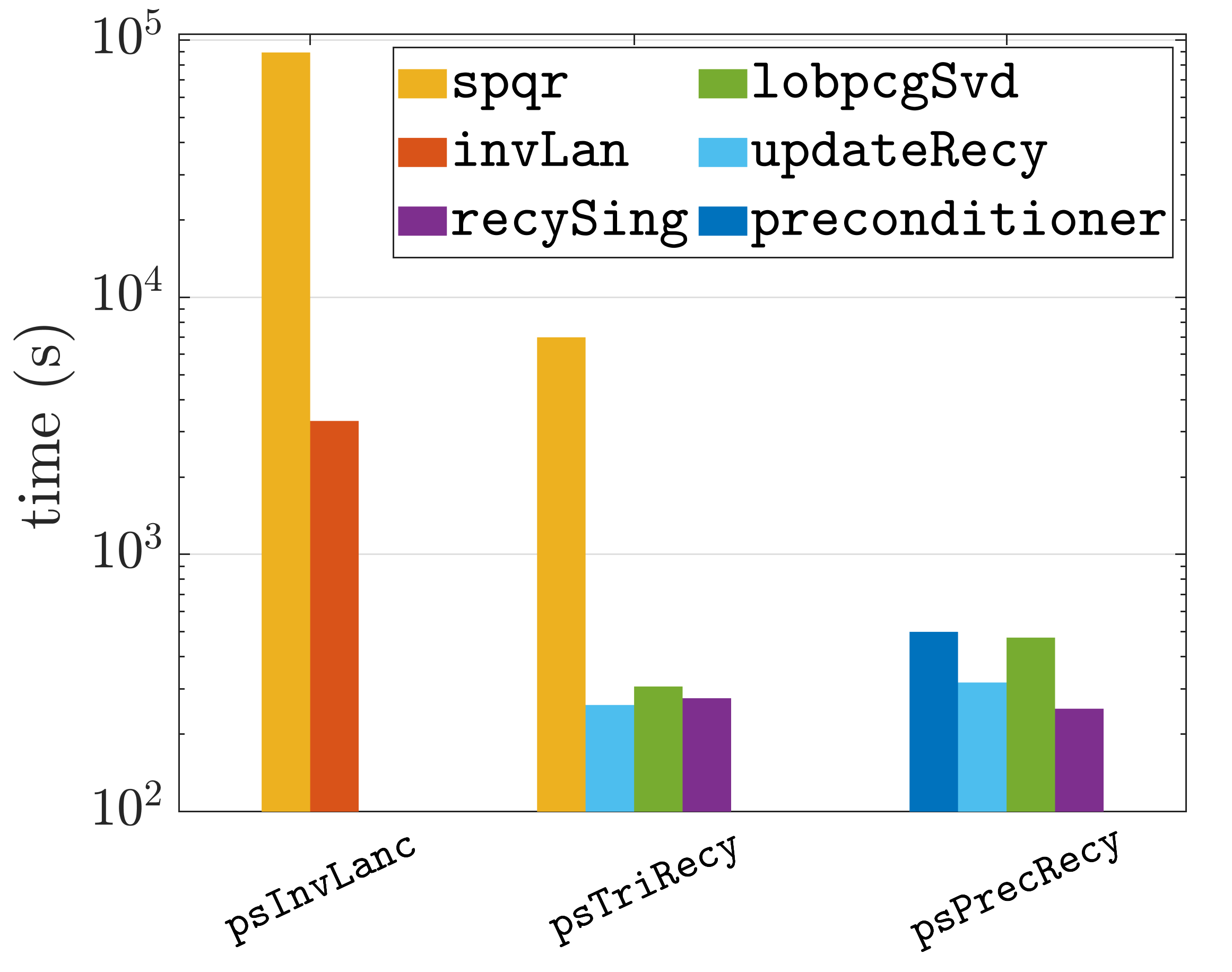}\label{fig:sprand-time}}
\caption{Results for sparserandom.}
\label{fig:sprand-results}
\end{figure}

\section{Outlook}\label{sec:out}
The success of the recycling strategies in accelerating the computation of pseudospectra encourages us to explore their applications and further enhancements.

\subsection{Extension to linear operators in Hilbert spaces}
On the one hand, the fast algorithms proposed in this paper complement the operator-oriented algorithm for computing the pseudospectra of linear operators \cite{den2}. On the other hand, nothing prevents us from applying similar recycling strategies to the operator-oriented algorithm for further speedup.

For example, the singular subspace recycling idea extends naturally to linear operators on Hilbert spaces. Let $\mH$ be a Hilbert space and $\mL:\mathcal D(\mL)\subset \mH \to \mH$ be a closed densely defined linear operator and define
\begin{align}
\mN(z) = (z\mI-\mL)^*(z\mI-\mL) = \mL^*\mL - z\mL^* - z^*\mL + |z|^2\mI,
\end{align}
where $\mI$ is the identity operator on $\mH$. The subspace $\mathcal X \subset \mathcal D(\mN(z))$ spanned by the eigenfunctions of $\mN(z)$ associated with its smallest eigenvalues can be computed using the operator Lanczos method described in \cite{den2}. We recycle such subspaces at select grid points by forming a recycling subspace $\mathcal V = \operatorname{span}\{V\}\subset\mathcal D(\mN(z))$, where $V = [v_1, \ldots, v_{pr}]$ is an orthonormal basis for $\mathcal V$ in $\mH$. The fast Rayleigh--Ritz-SVD procedure in \cref{sec:recycsing} is then applicable to $\mN(z)$ and $\mathcal V$ with \cref{H} replaced by
\begin{align*}
H_1 = \left\langle \mL V, \mL V \right\rangle, \quad
H_2 = \left\langle \mL V, V \right\rangle, \quad
H_3 = \left\langle V, V \right\rangle = I_{pr}.
\end{align*}
Here, $\left\langle \cdot, \cdot \right\rangle$ denotes the inner product in $\mH$. The recycling is followed by an adaptive update of the recycling subspace similar to that described in \cref{sec:upSingSp}, which carries over to the Hilbert space setting.

\subsection{Two-sided projection subspace recycling}
In the projection subspace recycling, we only use the projection subspace associated with the smallest eigenvalues of $(N(z), K)$. One may also recycle the subspace of the largest eigenvalues. Using the notation in \cref{twolevelCond}, the two-sided projection subspace is given by
\begin{align}
\bar{E} = [\bar{e}_1, \ldots, \bar{e}_{\bar{r}}, \bar{e}_{n-\bar{r}+1}, \ldots, \bar{e}_{n}].
\end{align}
If the two-level preconditioner in \cref{twolevel} is replaced by the balanced form
\begin{align}
K^{-1}_{Z} = (I_n - N(z)U)^T K^{-1} (I_n - N(z)U) + U,
\end{align}
where $U = Z(Z^*N(z)Z)^{-1}Z^*$, then under assumptions analogous to those in \cref{twolevelCond} we have
\begin{align}
\kappa_{r}^{\mathrm{eff}}(K_{\bar{E}}^{-1}N(z))
= \frac{\bar{\lambda}_{n-\bar{r}}}{\bar{\lambda}_{\bar{r}+r+1}}. \label{cond1}
\end{align}
Thus, two-sided projection subspace recycling can further reduce the effective condition number by replacing $\bar{\lambda}_{n}$ in \cref{cond} with the smaller quantity $\bar{\lambda}_{n-\bar{r}}$. This is particularly useful when the upper-bound criterion in \cref{tauRecycle} tends to trigger frequent reconstructions of the base preconditioner during the grid traversal.

\subsection{Recycling the domain decomposition preconditioner}
The preconditioner recycling strategy developed in the current study is also applicable to two-level domain decomposition methods. A two-level Schwarz preconditioner combines a local base solver with a coarse space, and robust algebraic constructions of such coarse spaces have been developed for sparse matrices and normal equations \cite{ald1,ald2,ald3}. For very large sparse problems, the incomplete factorization used in this paper may be replaced by an algebraic Schwarz preconditioner, offering a potentially more robust alternative. In this setting, one may keep the Schwarz preconditioner fixed over multiple nearby grid points and recycle only the coarse space, which plays the same role as the projection subspace in our two-level preconditioner.

\subsection{Other grid traversal strategies}
The batch-zigzag path is certainly not the only strategy to traverse the grid. Other locality-preserving traversal strategies could serve equally well. For instance, a Hilbert traversal \cite{bad} provides strong locality preservation and reduces the directional bias of row- or column-wise sweeps, whereas the Morton (also known as Z-order) traversal recursively visits local blocks and is easy to implement \cite{bad}. A systematic study of the traversal patterns could help us identify the optimal strategy, if one exists.

\clearpage
\section*{Supplementary Material}
\setcounter{section}{0}
\setcounter{subsection}{0}
\setcounter{equation}{0}
\setcounter{algorithm}{0}
\setcounter{figure}{0}
\setcounter{table}{0}
\renewcommand{\thesection}{SM\arabic{section}}
\renewcommand{\thesubsection}{\thesection.\arabic{subsection}}
\renewcommand{\theequation}{\thesection.\arabic{equation}}
\renewcommand{\thealgorithm}{\thesection.\arabic{algorithm}}
\renewcommand{\thefigure}{SM\arabic{figure}}
\renewcommand{\thetable}{SM\arabic{table}}
\makeatletter
\@addtoreset{algorithm}{section}
\makeatother

\section{LOBPCG algorithms}\label{sec:lobpcg}
LOBPCG is widely used for computing the extremal eigenvalues and corresponding eigenvectors of generalized eigenvalue problems (GEPs) involving a pair of Hermitian positive-definite matrices. LOBPCG is often preferable to the Lanczos method when a good preconditioner is available, since it can significantly reduce the number of iterations and thereby achieve convergence rates comparable to those of the inverse Lanczos method. In this supplementary section, we first review the standard LOBPCG method, upon which we develop an LOBPCG method for singular value decomposition. This method is referred to as LOBPCG-SVD in the main text.

\subsection{Standard LOBPCG}
Consider the GEP for Hermitian matrices $A,B\in\mathbb{C}^{n\times n}$,
\begin{align*}
AX = BX \Lambda,
\end{align*}
where $\Lambda = \operatorname{diag}\{\lambda_1, \dots, \lambda_r\}$ contains the $r$ smallest eigenvalues and $X\in\mathbb{C}^{n\times r}$ consists of the corresponding eigenvectors. Given a preconditioner $K\in\mathbb{C}^{n\times n}$, the LOBPCG algorithm constructs the search basis
\begin{align*}
S^{(j)} = [X^{(j)}\ W^{(j)}\ P^{(j)}] \in \mathbb{C}^{n\times 3r},
\end{align*}
at the $j$th iteration for $j \geq 2$. Here, $X^{(j)}\in\mathbb{C}^{n\times r}$ contains the current approximate eigenvectors. The matrix $W^{(j)}\in\mathbb{C}^{n\times r}$ consists of the preconditioned residuals
\begin{align*}
W^{(j)} = K^{-1}(AX^{(j)} - BX^{(j)}\Theta^{(j)}), \label{residual}
\end{align*}
where $\Theta^{(j)} = \operatorname{diag}\{\theta_1^{(j)}, \dots, \theta_r^{(j)}\}$ is the diagonal matrix of Rayleigh quotients associated with $X^{(j)}$. The matrix $P^{(j)}\in\mathbb{C}^{n\times r}$ contains the conjugate directions from previous iterations. For the first iteration,
\begin{align*}
S^{(1)} = [X^{(1)}\ W^{(1)}] \in \mathbb{C}^{n\times 2r}.
\end{align*}
The approximate eigenpairs are updated by first performing the Rayleigh--Ritz procedure
\begin{align}
(S^{(j)*}AS^{(j)})G^{(j+1)} = (S^{(j)*}BS^{(j)})G^{(j+1)} \Theta^{(j+1)}, \label{rr}
\end{align}
where the leading $r\times r$ block of $\Theta^{(j+1)}$ provides the updated approximate eigenvalues. For efficient and stable updates of $X^{(j)}$ and $P^{(j)}$, the technique of Hetmaniuk and Lehoucq \cite{het,due} is applied to $G^{(j+1)}$ to obtain matrices $G_X^{(j+1)}\in\mathbb{C}^{3r\times r}$ and $G_P^{(j+1)}\in\mathbb{C}^{3r\times r}$. Finally, the approximate eigenvectors and conjugate directions are updated as
\begin{align*}
X^{(j+1)} = S^{(j)}G_X^{(j+1)}, \quad
P^{(j+1)} = S^{(j)}G_P^{(j+1)}.
\end{align*}

\subsection{LOBPCG for SVD}\label{sec:lobpcgsvd}

Consider a general matrix $C \in \mathbb{C}^{m \times n}$. Let $\Sigma = \operatorname{diag}\{\sigma_1, \dots, \sigma_r\}$ be the diagonal matrix containing the $r$ smallest singular values of $C$, and let $Y \in \mathbb{C}^{m \times r}$ and $X \in \mathbb{C}^{n \times r}$ be the matrices consisting of the corresponding left and right singular vectors, respectively. These matrices satisfy
\begin{align}
CX = Y\Sigma, \quad C^*Y = X\Sigma. \label{svd}
\end{align}
Since the eigenvalues of $C^*C$ are the squares of the singular values of $C$, and the eigenvectors of $C^*C$ coincide with the right singular vectors of $C$, we adapt the standard LOBPCG algorithm, originally formulated for symmetric GEPs, to the matrix pair $(C^*C, I_n)$ for computing the factorization in \cref{svd}. The Rayleigh--Ritz problem \cref{rr} then becomes
\begin{align}
(S^{(j)*}C^*CS^{(j)})G^{(j+1)} = (S^{(j)*}S^{(j)})G^{(j+1)} (\Xi^{(j+1)})^2, \label{RRSvdNorm1}
\end{align}
where $\Xi^{(j+1)} = \operatorname{diag}\{\xi^{(j+1)}_1, \dots, \xi^{(j+1)}_r\}$ contains the Ritz singular values. Solving \cref{RRSvdNorm1} directly would reduce the accuracy of the computed singular values and singular vectors, since $\operatorname{cond}(C^*C) = \operatorname{cond}(C)^2$. Furthermore, when $\operatorname{cond}(C) > \ema^{-1/2}$, the Rayleigh--Ritz problem \cref{RRSvdNorm1} becomes so ill-conditioned that the computed eigenpairs may lose all accuracy, or the eigensolver may even break down.

To address this issue, we enforce orthogonality on $S^{(j)}$, i.e.,
\begin{align*}
S^{(j)*}S^{(j)} = I_{3r},
\end{align*}
and compute the thin QR factorization of $CS^{(j)}$:
\begin{align}
CS^{(j)} = Q^{(j)}R^{(j)}, \label{QRAS}
\end{align}
Because of the orthogonality of $Q^{(j)}$, \cref{RRSvdNorm1} reduces to
\begin{align}
(R^{(j)*}R^{(j)})G^{(j+1)} = G^{(j+1)} (\Xi^{(j+1)})^2. \label{RRSvdNorm2}
\end{align}
Instead of solving \cref{RRSvdNorm2}, we compute the SVD of $R^{(j)}$ directly:
\begin{align}
R^{(j)} = J^{(j+1)}\Xi^{(j+1)}(G^{(j+1)})^*, \label{RRSvdNorm}
\end{align}
where $\Xi^{(j+1)}$ and $G^{(j+1)}$ represent the same quantities as in \cref{RRSvdNorm1}, but are obtained here with greater numerical stability. We refer to the QR factorization of \cref{QRAS} and SVD factorization of \cref{RRSvdNorm} collectively as the \emph{Rayleigh--Ritz-SVD procedure}.

In each iteration, we estimate the number of singular values and singular vectors that have converged by finding $r_c$ such that, for $k = 1, \ldots, r_c$, the residuals of the Ritz singular pairs satisfy
\begin{align}
\lVert C^*C X_{k{:}k} - (\xi^{(j+1)}_k)^2 X_{k{:}k}\rVert \leq \max \{\eta \lVert C \rVert \xi^{(j+1)}_k ,\ \lVert C \rVert^2\ema\}, \label{tolSVD}
\end{align}
where $\eta$ is a user-specified tolerance in the backward-stability criterion \cite{bag}, and the term $\lVert C \rVert^2\ema$ prevents unnecessary iterations.

The remaining components of the LOBPCG algorithm for SVD (LOBPCG-SVD) parallel those for the GEP. The complete algorithm is summarized in \cref{alg:lobpcgSvd}, where \texttt{hl} denotes the subroutine implementing the Hetmaniuk--Lehoucq technique. In line 10 of \cref{alg:lobpcgSvd}, the products $CX^{(j)}$ and $CP^{(j)}$ can be obtained either by explicit computation or by implicit updates \cite{due}. For efficiency, we adopt the latter approach.

\begin{algorithm}[tbhp]
  \renewcommand{\algorithmicrequire}{\textbf{Input:}}
  \renewcommand{\algorithmicensure}{\textbf{Output:}}
  \algrenewcommand{\algorithmicrepeat}{\textbf{do}\ }
  \algrenewcommand{\algorithmicuntil}{\textbf{while}}
  \caption{The LOBPCG algorithm for SVD}\label{alg:lobpcgSvd}
  \begin{spacing}{1.05}
  \begin{algorithmic}[1]
    \Require $C \in \mathbb{C}^{m \times n}$, preconditioner $K$, initial approximate right singular vectors $X^{(0)} \in \mathbb{C}^{n \times r}$ that satisfy $(X^{(0)})^*X^{(0)} = I_r$, and desired number $r_v$ of converged right singular vectors ($r_v \leq r$).
    \Ensure Converged right singular vectors $X \in \mathbb{C}^{n \times r}$ and diagonal matrix $\Sigma \in \mathbb{C}^{r \times r}$ containing the converged singular values.
    \AlgoOutputRule
    \Function{$[X,\Sigma] = \tt{lobpcgSvd}$}{$C, K, X^{(0)}, r_v$}
      \State $[\sim, R^{(0)}] = {\tt{qr}}(CX^{(0)})$, $[\sim, \Xi^{(1)}, G^{(1)}] = {\tt{svd}}(R^{(0)})$
      \State $X^{(1)} = X^{(0)}G^{(1)}$
      \State $W^{(1)} = C^*(CX^{(1)}) - X^{(1)}(\Xi^{(1)})^2$
      \State $P^{(1)} = [~]$
      \For{$j = 1,2,\dots$}
        \State $W^{(j)} \gets K^{-1}W^{(j)}$
        \State $W^{(j)} \gets W^{(j)} - [X^{(j)}\ P^{(j)}]([X^{(j)}\ P^{(j)}]^*W^{(j)})$
        \State $[W^{(j)}, \sim] = {\tt{qr}}(W^{(j)})$
        \State $S^{(j)} = [X^{(j)}\ W^{(j)}\ P^{(j)}]$
        \State $[\sim, R^{(j)}] = {\tt{qr}}([CX^{(j)}\ CW^{(j)}\ CP^{(j)}])$
        \State $[\sim, \Xi^{(j+1)}, G^{(j+1)}] = {\tt{svd}}(R^{(j)})$, $\Xi^{(j+1)} \gets \Xi^{(j+1)}_{1{:}r, 1{:}r}$
        \State $[G_X^{(j+1)}, G_P^{(j+1)}] = {\tt{hl}}(G^{(j+1)})$
        \State $X^{(j+1)} = S^{(j)}G_X^{(j+1)}$
        \State $P^{(j+1)} = S^{(j)}G_P^{(j+1)}$
        \State $W^{(j+1)} = C^*(CX^{(j+1)}) - X^{(j+1)}(\Xi^{(j+1)})^2$
        \State Determine $r_c$ so that \cref{tolSVD} holds for $k = 1, \ldots, r_c$.
        \If{$r_c \geq r_v$}
          \State Return $X = X^{(j+1)}$ and $\Sigma = \Xi^{(j+1)}$
        \EndIf
      \EndFor
    \EndFunction
  \end{algorithmic}
  \end{spacing}
\end{algorithm}

\section{Preliminary reductions for dense and sparse matrices}\label{sec:prelred}
In this supplementary appendix, we describe how the reduced form
\begin{align}
M(z) = M - zS
\end{align}
used in \cref{sec:prelimred} of the main text is obtained for dense and sparse matrices. Throughout, for a given matrix $C\in \mathbb{C}^{m\times n}$ we assume $m \ge n$. In the algorithms of the main text, the routine $[M, S] = $ \texttt{preliminary}$(C)$ refers to the appropriate reduction described below that returns matrices $M$ and $S$ such that $C-zI$ and $M-zS$ have the same singular values.

\subsection{Dense square matrices}
For square matrices, we first computes a Schur decomposition
\begin{align}
C = Q R Q^*,
\end{align}
where $Q$ is unitary and $R$ is upper triangular. Thus,
\begin{align}
C-zI_n = Q(R-zI_n)Q^*,
\end{align}
from which it follows that $C-zI_n$ and $R-zI_n$ have the same singular values for all $z$. Hence,
\begin{align}
M = R, \qquad S = I_n,
\end{align}
and the thin QR factorization of \cref{triGen} is skipped as $Q(z) = I$ and $R(z)=R-zI_n$. See \cite[\S 39]{tre2} for more details.

\subsection{Dense rectangular matrices with \texorpdfstring{$m \ge 2n$}{m >= 2n}}
Let
\begin{align}
I = \begin{bmatrix} I_n \\ 0 \end{bmatrix} \in \mathbb{C}^{m \times n}
\end{align}
and partition $C$ conformably as
\begin{align}
C = \begin{bmatrix} C_u \\ C_l \end{bmatrix},
\end{align}
where $C_u \in \mathbb{C}^{n \times n}$ and $C_l \in \mathbb{C}^{(m-n)\times n}$. Since only the leading $n \times n$ block of $C(z) = C-zI$ depends on $z$, we first QR factorize the lower part of $C$ by
\begin{align}
C_l = Q_l \begin{bmatrix} R_l \\ 0 \end{bmatrix},
\end{align}
where $Q_l \in \mathbb{C}^{(m-n)\times(m-n)}$ is unitary and $R_l \in \mathbb{C}^{n\times n}$ is upper triangular. Pre-applying the unitary transformation $P = \operatorname{diag}(I_n,Q_l^*)$ to $C(z)$ gives
\begin{align}
P(C-zI)
=
\begin{bmatrix}
C_u-zI_n\\
R_l\\
0
\end{bmatrix}.
\end{align}
After removing the zero rows in the bottom, we obtain a $2n\times n$ matrix pencil
\begin{align}
M(z)=
\begin{bmatrix}
C_u\\
R_l
\end{bmatrix}
- z
\begin{bmatrix}
I_n\\
0
\end{bmatrix},
\end{align}
which is trapezoidal and has the same singular values as $C(z)$. Hence, we take
\begin{align}
M=\begin{bmatrix} C_u\\ R_l \end{bmatrix},
\qquad
S=\begin{bmatrix} I_n\\ 0 \end{bmatrix}.
\end{align}
See \cite[\S 4.1]{tre2} for more details.

% For each $z$, we further performs QR factorization of $M(z)$ and then discard the zero rows in the resulting upper triangular factor, obtaining the $n\times n$ triangular matrix whose smallest singular value equals that of $C(z)$. As shown in \cite{wri2}, this further QR reduction costs $\mO(n^3)$ flops for each $z$. 

\subsection{Dense rectangular matrices with \texorpdfstring{$n < m < 2n$}{n <= m < 2n}}
When $n < m < 2n$, the $z$-independent block of $C(z)$ is smaller than the $z$-dependent one. In this case, we work with a different partition
\begin{align}
I = \begin{bmatrix} I_u \\ I_l \end{bmatrix},
\qquad
C = \begin{bmatrix} C_u \\ C_l \end{bmatrix},
\end{align}
where $I_u,C_u \in \mathbb{C}^{(m-n)\times n}$ and $I_l,C_l \in \mathbb{C}^{n\times n}$. We then compute the generalized Schur decomposition of the square pencil $(I_l,C_l)$ as
\begin{align}
Q(zI_l-C_l)Z = zT_l-S_l,
\end{align}
where $Q,Z \in \mathbb{C}^{n\times n}$ are unitary and $S_l,T_l \in \mathbb{C}^{n\times n}$ are upper triangular. Let $\tilde{I}$ be the ${(m-n)\times (m-n)}$ identity matrix and $P = \operatorname{diag}(\tilde{I}, Q)$. Pre-multiplying $P$ and post-multiplying $Z$ to $C(z)$ gives
\begin{align}
P(zI-C)Z=
z
\begin{bmatrix}
I_uZ\\
T_l
\end{bmatrix}
-
\begin{bmatrix}
C_uZ\\
S_l
\end{bmatrix},
\end{align}
Thus, $M-zS$ has the same singular values as $C(z)$ for any $z$, where
\begin{align}
M=
\begin{bmatrix}
C_uZ\\
S_l
\end{bmatrix},
\qquad
S=
\begin{bmatrix}
I_uZ\\
T_l
\end{bmatrix}.
\end{align}
See \cite[\S 4.2]{tre2} for more details.

% In addition, $M-zS$ is again trapezoidal, and one can compute the triangular factor in \cref{triGen} by banded QR factorization at a cost of $\mO((m-n)n^2)$ flops per grid point. This is usually cheaper than the $\mO(n^3)$ cost in the case of $m \ge 2n$.

\subsection{Sparse matrices}
For sparse matrices, we do not attempt to perform a dense preliminary reduction. Instead, we compute a fill-reducing column permutation $\Pi$ by a symbolic analysis of the sparsity pattern of $N(z) = (C-zI)^*(C-zI)$. Common choices for $\Pi$ include column approximate minimum degree orderings \cite{dav1,dav2}, approximate minimum degree orderings applied to the normal-equations pattern \cite{ame1,ame2}, and nested-dissection-type orderings, for instance those produced by graph partitioning packages such as \textsc{METIS} \cite{kar}. These orderings are routinely used in modern sparse QR packages; see, e.g., \cite{dav3} for details.

Since the nonzero pattern of $N(z)$ is independent of $z$, $\Pi$ can be reused for all grid points. We then apply the same permutation to both $C$ and $I$ so that
\begin{align}
(C-zI)\Pi = M-zS,
\end{align}
where 
\begin{align}
M = C\Pi, \qquad S = I\Pi.
\end{align}
% At each grid point, one may then compute a sparse QR factorization of $M-zS$ and use the resulting triangular factor as in \cref{triGen}.
\bibliographystyle{siamplain}
\bibliography{references}
\end{document}